\documentclass{article}




     \usepackage[final, nonatbib]{neurips_2019}


\usepackage[utf8]{inputenc} 
\usepackage[T1]{fontenc}    
\usepackage[colorlinks=true, allcolors=blue]{hyperref}       
\usepackage{url}            
\usepackage{booktabs}       
\usepackage{amsfonts}       
\usepackage{nicefrac}       
\usepackage{microtype}      


\usepackage{amsmath}
\usepackage{amssymb}
\usepackage{amsthm}
\usepackage{comment}
\usepackage{enumitem} 
\usepackage{cases}
\usepackage{graphicx}
\usepackage{subcaption}
\captionsetup[subfigure]{labelformat=simple,labelsep=colon}

\usepackage[disable]{todonotes}
\usepackage{empheq} 
\usepackage[most]{tcolorbox} 
\usepackage{breakurl}
\usepackage[linesnumbered,ruled]{algorithm2e}
\usepackage{multirow}

\usepackage{xcolor}
\usepackage{comment}
\usepackage[normalem]{ulem} 
\newcommand\str{\bgroup\markoverwith
{\textcolor{red}{\rule[0.5ex]{2pt}{1.5pt}}}\ULon} 
\usepackage{pifont}
\newcommand{\cmark}{\ding{51}}%
\newcommand{\xmark}{\ding{55}}%

\newcommand{\al}{\alpha}
\newcommand{\be}{\beta}

\newcommand{\id}{I_d}


\newcommand{\bigO}{\mathcal{O}}

\newcommand{\R}{{\mathbb{R}}}


\DeclareMathOperator{\Tr}{Tr}



\newcommand{\tx}{\tilde{\xi}}

\newcommand{\E}{\mathbb{E}}
\newcommand{\Sml}{$f\in S_{\mu,L}(\mathbb{R}^d)$ }

\newtheorem{thm}{Theorem}[section]  

\newtheorem{lem}[thm]{Lemma}
\newtheorem{rmk}[thm]{Remark}
\newtheorem{corr}[thm]{Corollary}

\newcommand{\beq}{\begin{equation}}
\newcommand{\eeq}{\end{equation}}
\newcommand{\beqa}{\begin{eqnarray}}
\newcommand{\eeqa}{\end{eqnarray}}
\newcommand{\beqs}{\begin{equation*}}
\newcommand{\eeqs}{\end{equation*}}
\newcommand{\beqas}{\begin{eqnarray*}}
\newcommand{\eeqas}{\end{eqnarray*}}
\DeclarePairedDelimiter\ceil{\lceil}{\rceil}

\newtcbox{\mymath}[1][]{%
    nobeforeafter, math upper, tcbox raise base,
    enhanced, colframe=blue!30!black,
    colback=blue!30, boxrule=1pt,
    #1}
    

\def\sa#1{\textcolor{black}{#1}}
\def\mg#1{\textcolor{black}{#1}}

\def \ali#1{\textcolor{black}{#1}}

\title{A Universally Optimal Multistage Accelerated Stochastic Gradient Method}

%

\author{%
  Necdet Serhat Aybat\thanks{The authors are in alphabetical order.} \\
  Pennsylvania State University\\
  University Park, PA, USA \\
  \texttt{nsa10@psu.edu}\\
   \And
  Alireza Fallah$^*$ \\
  Massachusetts Institute of Technology \\
  Cambridge, MA, USA \\
  \texttt{afallah@mit.edu} \\
  \And
  Mert G\"urb\"uzbalaban$^*$ \\
  Rutgers University \\
  Piscataway, NJ, USA \\
   \texttt{mg1366@rutgers.edu} \\
   \And
  Asuman Ozdaglar$^*$ \\
  Massachusetts Institute of Technology \\
  Cambridge, MA, USA \\
   \texttt{asuman@mit.edu} \\
}


\begin{document}
\maketitle

\begin{abstract}
We study the problem of minimizing a strongly convex, smooth function when we have noisy estimates of its gradient. We propose a novel multistage accelerated algorithm that is universally optimal in the sense that it achieves the optimal rate both in the deterministic and stochastic case and operates without knowledge of noise characteristics. The algorithm consists of stages that use a stochastic version of Nesterov's method with a specific restart and parameters selected to achieve the fastest reduction in the bias-variance terms in the convergence rate bounds.
\end{abstract}

\section{Introduction}
First order optimization methods play a key role in solving large scale machine learning problems due to their low iteration complexity and scalability with large data sets. In several cases, these methods operate with noisy first order information either because the gradient is estimated from draws or subset of components of the underlying objective function \cite{bach:hal-00608041, pmlr-v80-cohen18a, flammarion2015averaging,ghadimi2012optimal, ghadimi2013optimal, pmlr-v75-jain18a, vaswani2018fast, aspremontSmooth08,devolder2014first} or noise is injected intentionally due to privacy or algorithmic considerations \cite{bassily2014private,neelakantan2015adding,raginsky2017non,GGZ-underdamped-18,GGZ-2}. A fundamental question in this setting is to design fast algorithms with \textit{optimal convergence rate}, matching the lower bounds on the oracle complexity in terms of target accuracy and other important parameters both for the deterministic and stochastic case (i.e., with or without gradient errors).

In this paper, we design an \textit{optimal} first order method to solve the problem  
\begin{equation}\label{main_prob}
f^*\triangleq\min_{x \in \R^d} f(x) \quad \hbox{such that } f \in S_{\mu,L}(\mathbb{R}^d),
\end{equation}
where, for scalars $0<\mu\leq L$, $S_{\mu, L}(\mathbb{R}^d)$ is the set of continuously differentiable functions $f:\mathbb{R}^d\rightarrow \mathbb{R}$ that are strongly convex with modulus $\mu$ and have Lipschitz-continuous gradients with constant $L$, which imply that for every $x,y \in \mathbb{R}^d$, 
$f$ satisfies (see e.g. \cite{nesterov_convex}) 
\begin{align}
\frac{\mu}{2} \Vert x-y \Vert^2 \leq f(x)-f(y)-\nabla f(y)^\top (x-y) \leq \frac{L}{2} \Vert x-y \Vert^2. \label{ineq_S} 
\end{align}
For $f\in S_{\mu, L}(\mathbb{R}^d)$, the ratio $\kappa \triangleq \frac{L}{\mu}$ is called the \emph{condition number} of $f$. Throughout the paper, we denote the solution of problem \eqref{main_prob} by $f^*$ which is achieved at the {\it unique} optimal point $x^*$.
 
We assume that the gradient information is available through a stochastic oracle, which at each iteration $n$, given the current iterate $x_n \in \mathbb{R}^d$, provides the noisy gradient $\tilde{\nabla} f(x_n, w_n)$ where $\{w_n\}_n$ is a sequence of independent random variables such that for all $n\geq 0$,
\begin{align}
\label{unbiased:main}
\E[\tilde{\nabla}f(x_n,w_n)|x_n] = \nabla f(x_n),\quad 
\E\left [\|\tilde{\nabla}f(x_n,w_n) - \nabla f(x_n)\|^2\middle|x_n\right ] \leq \sigma^2. 
\end{align}
This oracle model is commonly considered in the literature (see e.g. \cite{ghadimi2012optimal, ghadimi2013optimal,bubeck2015convex}). \ali{In Appendix \ref{app_general_noise}, we show how our analysis can be extended to the following more general noise setting, same as the one studied in \cite{bach:hal-00608041}, \mg{where the variance of the noise is allowed to grow linearly with the squared distance to the optimal solution}:
\begin{equation}
\E[\tilde{\nabla}f(x_n,w_n)|x_n] = \nabla f(x_n),\quad \E\left [\|\tilde{\nabla}f(x_n,w_n) - \nabla f(x_n)\|^2\middle|x_n\right ] \leq \sigma^2 + \eta^2 \|x_n-x^*\|^2.
\end{equation}
}
\mg{for some constant $\eta\geq0$.}

\ali{Under noise setting \sa{in} \eqref{unbiased:main}}, the performance of many algorithms is characterized by the expected error of the iterates (in terms of the suboptimality in function values) which admits a bound as a sum of two terms: a \textit{bias term} that shows the decay of initialization error $f(x_0)-f^*$ and is independent of the noise parameter $\sigma^2$, and a \textit{variance term} that depends on $\sigma^2$ and is independent of the initial point $x_0$. A lower bound on the bias term follows from the seminal work of Nemirovsky and Yudin \cite{nemirovsky1983problem}, which showed that without noise ($\sigma = 0$) and after $n$ iterations, 
$\E\left[f(x_n)\right] - f^*$ cannot be smaller than\footnote{This lower bound is shown with the additional assumption $n \leq d$} 
\begin{equation}\label{Nemirovsky_lowebound}
L \| x_0 - x^* \|_2^2 \exp(-\bigO(1) \frac{n}{\sqrt{\kappa}}). 
\end{equation}
With noise, Raginsky and Rakhlin \cite{raginsky2011information} provided the following (much larger) lower bound\footnote{The authors show this result for $\mu=1$. Nonetheless, it can be generalized to any $\mu > 0$ by scaling the problem parameters properly.} on function suboptimality which also provides a lower bound on the variance term:
\begin{equation}\label{sigma_lowerbound}
\Omega \left( \frac{\sigma^2}{\mu n}\right ) \quad \hbox{for } n \hbox{ sufficiently large.}  
\end{equation}
Several algorithms have been proposed in the recent literature attempting to achieve these lower bounds.\footnote{Here we review their error bounds after $n$ iterations highlighting dependence on $\sigma^2$, $n$, and initial point $x_0$, suppressing $\mu$ and $L$ dependence.} Xiao \cite{xiao2010dual} obtains $\mathcal{O}(\log (n)/n)$ performance guarantees in expected suboptimality for an accelerated version of the dual averaging method.
Dieuleveut et al. \cite{dieuleveut2017harder} consider quadratic objective function and develop an algorithm with averaging to achieve the error bound $\bigO(\tfrac{\sigma^2}{n} + \tfrac{\| x_0 - x^* \|^2}{n^2})$. 
Hu et al. \cite{NIPS2009_3817} consider general strongly convex and smooth functions and achieve an error bound with similar dependence under the assumption of bounded noise. Ghadimi and Lan \cite{ghadimi2012optimal} and Chen et al. \cite{NIPS2012_4543} extend this result to the noise model 
in \eqref{unbiased:main} by introducing the accelerated stochastic approximation algorithm (AC-SA) and optimal regularized dual averaging algorithm (ORDA), respectively. Both AC-SA and ORDA have multistage versions presented in \cite{ghadimi2013optimal} and \cite{NIPS2012_4543} where authors improve the bias term of their single stage methods to the optimal $\exp(-\bigO(1)n/\sqrt{\kappa})$ by exploiting knowledge of $\sigma$ and the optimality gap $\Delta$, i.e., an upper bound for $f(x_0) - f^*$, in the operation of the algorithm.
Another closely related paper is \cite{pmlr-v80-cohen18a} which proposed $\mu$AGD+ and showed under {\it additive} noise model that it admits the error bound $\bigO(\tfrac{\sigma^2}{n} + \tfrac{\| x_0 - x^* \|^2}{n^p})$ for any $p \geq 1$ where the constants grow with $p$, and in particular, they achieve the bound $\bigO(\tfrac{\sigma^2 \log n}{n} + \tfrac{\| x_0 - x^* \|^2 \log n}{n^{\log  n}})$ 
for $p= \log n$.

In this paper, we introduce the class of Multistage Accelerated Stochastic Gradient (M-ASG) methods that are universally optimal, achieving the lower bound both in the noiseless deterministic case and the noisy stochastic case up to some constants independent of $\mu$ and $L$. M-ASG proceeds in stages that use a stochastic version of Nesterov's accelerated method \cite{nesterov_convex} with a specific restart  and parameterization. Given an arbitrary length and constant stepsize for the first stage 
together with geometrically growing lengths and shrinking stepsizes for the following stages, we first provide a general convergence rate result for M-ASG (see Theorem \ref{main_result}). Given the computational budget $n$, a specific choice 
for the length of the first stage 
is shown to achieve the optimal error bound {\it without requiring knowledge of the noise bound $\sigma^2$ and the initial optimality gap} (See Corollary~\ref{Universal_MASG}). To the best of our knowledge, this is the first algorithm that achieves such a lower bound under such informational assumptions. In Table \ref{table-1}, we provide a comparison of our algorithm with other algorithms in terms of required assumptions and optimality of their results in both bias and variance terms. In particular, we consider ACSA \cite{ghadimi2012optimal}, Multistage AC-SA \cite{ghadimi2013optimal}, ORDA and Multistage ORDA \cite{NIPS2012_4543}, and the algorithm proposed in \cite{pmlr-v80-cohen18a}.

\begin{table}[t]
\small
\caption{Comparison of algorithms}
\label{table-1}
\vskip 0.1in
\begin{center}
\begin{tabular}{ |c|c|c|c|c|c|}
\hline
\multirow{2}{*}{\textbf{Algorithm}}&  \multicolumn{3}{c|}{\textbf{Requires}} & \textbf{Opt.} & \textbf{Opt.} \\
 & $\sigma$ & $\Delta$ & $n$ or $\epsilon$ &\textbf{Bias} & \textbf{Var.}\\ 
\hline
AC-SA & \xmark &  \xmark & \xmark & \xmark & \cmark \\
\hline
Multi. AC-SA & \cmark & \cmark & \xmark & \cmark & \cmark\\
\hline
ORDA & \xmark & \xmark & \xmark & \xmark & \cmark \\
\hline
Multi. ORDA & \cmark & \cmark & \xmark & \cmark & \cmark\\
\hline
Cohen et al. & \xmark & \xmark & \xmark & \xmark & \cmark\\
\hline
\textbf{M-ASG} (With parameters in
\textbf{Corollary \ref{corr_result}}) & \multirow{1}{*}{\xmark} & \multirow{1}{*}{\xmark} & \multirow{1}{*}{\xmark} & \multirow{1}{*}{\xmark} & \multirow{1}{*}{\cmark}\\
\hline
\textbf{M-ASG} (With parameters in
\textbf{Corollary \ref{Universal_MASG}}) & \multirow{1}{*}{\xmark} & \multirow{1}{*}{\xmark} & \multirow{1}{*}{\cmark [$n$]} & \multirow{1}{*}{\cmark} & \multirow{1}{*}{\cmark}\\
\hline
\textbf{M-ASG} (With parameters in
\textbf{Corollary \ref{ep_lower_bound}}) & \multirow{1}{*}{\xmark} & \multirow{1}{*}{\cmark} & \multirow{1}{*}{\cmark [$\epsilon$]} & \multirow{1}{*}{\cmark} & \multirow{1}{*}{\cmark}\\
\hline
\end{tabular}
\end{center}
\end{table}%

Our paper builds on an analysis of Nesterov's accelerated stochastic method with a specific momentum parameter presented in Section \ref{ASG} which may be of independent interest. This analysis follows from a dynamical system representation and study of first order methods which has gained attention in the literature recently \cite{lessard2016analysis, hu2017dissipativity, RAGM}. In Section \ref{MASG}, we present the M-ASG algorithm, and characterize its behavior under different assumptions as summarized in Table \ref{table-1}. In particular, we show that it achieves the optimal convergence rate with the given budget of iterations $n$. In Section \ref{MASG2}, we show how additional information  such as $\sigma$ and $\Delta$ can be leveraged in our framework to improve practical performance. Finally, in Section \ref{numerical_experiments}, we provide numerical results on the comparison of our algorithm with some of the other most recent methods in the literature. 
\paragraph{Preliminaries and notation:}Let $\id$ and $0_d$ represent the $d\times d$ identity and zero matrices. For matrix $A \in \mathbb{R}^{d\times d}$, $\Tr(A)$ and $\det(A)$ denote the trace and determinant of $A$, respectively. Also, for scalars $1 \leq i \leq j \leq d$ and $1 \leq k \leq l \leq d$, we use $A_{[i:j],[k:l]}$ to show the submatrix formed by rows $i$ to $j$ and columns $k$ to $l$. We use the superscript $^\top$ to denote the transpose of a vector or a matrix depending on the context. Throughout this paper, all vectors are represented as column vectors. Let $\mathbb{S}^m_{+}$ denote the set of all symmetric and positive semi-definite $m \times m$ matrices. For two matrices $A \in \mathbb{R}^{m \times n}$ and $B \in \mathbb{R}^{p \times q}$, their Kronecker product is denoted by $A\otimes B$. For scalars $0<\mu\leq L$, $S_{\mu, L}(\mathbb{R}^d)$ is the set of continuously differentiable functions $f:\mathbb{R}^d\rightarrow \mathbb{R}$ that are strongly convex with modulus $\mu$ and have Lipschitz-continuous gradients with constant $L$. \ali{All logarithms throughout the paper are in natural basis.}
\section{Modeling Accelerated Gradient method as a dynamical system}\label{ASG}
In this section we study Nesterov's Accelerated Stochastic Gradient method (ASG) \cite{nesterov_convex} with 
the stochastic first-order oracle in \eqref{unbiased:main}:
\begin{align}
\label{nest:main}
y_k = (1+\beta)x_k - \beta x_{k-1},\quad 
x_{k+1} = y_k - \alpha \tilde{\nabla} f(y_k,w_k)
\end{align}
where $\alpha\in(0,\frac{1}{L}]$ is the stepsize and $\beta=\tfrac{1-\sqrt{\al\mu}}{1+\sqrt{\al\mu}}$ is the momentum parameter. 
This choice of momentum parameter has already been studied 
in the literature, e.g., \cite{nitanda2014stochastic, CIAG, shi2018understanding}. {In the next lemma, we provide a new motivation for this choice by showing that for quadratic functions and in the noiseless setting, this momentum parameter achieves the {\it fastest} asymptotic convergence rate for a given fixed stepsize $\alpha \in (0,\frac{1}{L}]$. The proof of this lemma is provided in Appendix \ref{beta_intuition_proof}.} 
\begin{lem} \label{beta_intuition}
Let \Sml be a strongly convex quadratic function 
such that $f(x) = \tfrac{1}{2} x^\top Q x - p^\top x + r$ where $Q$ is a 
$d$ by $d$ symmetric positive definite 
matrix with all its eigenvalues in the interval $[\mu, L]$. Consider the deterministic ASG iterations, i.e.,  $\sigma=0$, as shown in \eqref{nest:main}, with constant stepsize $\alpha \in (0,1/L]$. Then, the fastest asymptotic convergence rate, i.e. the smallest $\rho\in(0,1)$ that satisfies the inequality
\begin{equation*}
\| x_k- x^*\|^2 \leq (\rho+\epsilon_k)^{2k} \| x_0- x^*\|^2, ~~\forall x_0\in\R^d,
\end{equation*}
for some non-negative sequence $\{\epsilon_k\}_k$ that goes to zero is $\rho=1-\sqrt{\al \mu}$\footnote{Note that although this rate is asymptotic, its smaller than the non-asymptotic rate that we provide for general strongly convex functions in Theorem \ref{be_al_thm}, as there $\rho=\sqrt{1-\sqrt{\al \mu}}$.} and it is achieved by $\beta =  \tfrac{1-\sqrt{\al\mu}}{1+\sqrt{\al\mu}}$. As a consequence, for this choice of $\beta$, \sa{there exists $\{\epsilon_k\}$ such that $\lim_{k \to \infty} \epsilon_k = 0$ and}
\begin{equation*}
f(x_k)- f^* \leq L (1-\sqrt{\al \mu}+\epsilon_k)^{2k} \| x_0- x^*\|^2.
\end{equation*}
\end{lem}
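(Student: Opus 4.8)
The plan is to exploit that, for a quadratic $f$ with $\sigma=0$, the ASG recursion \eqref{nest:main} is a \emph{linear} time-invariant dynamical system, so that its asymptotic rate is the spectral radius of an associated $2\times2$ transition matrix, and then to minimize that spectral radius over $\beta$. Concretely, since $\nabla f(x)=Q(x-x^*)$, subtracting $x^*$ in \eqref{nest:main} gives $x_{k+1}-x^*=(\id-\alpha Q)\big((1+\beta)(x_k-x^*)-\beta(x_{k-1}-x^*)\big)$. Diagonalizing $Q$ and expanding $x_k-x^*$ in its eigenbasis, this decouples into one scalar recursion per eigenvalue $\lambda\in[\mu,L]$: if $e_k$ is the corresponding coordinate of $x_k-x^*$, then $(e_{k+1},e_k)^\top=T_\lambda(e_k,e_{k-1})^\top$ with
\[
T_\lambda=\begin{pmatrix}(1-\alpha\lambda)(1+\beta) & -(1-\alpha\lambda)\beta\\[2pt] 1 & 0\end{pmatrix},\qquad
\chi_\lambda(t)=t^2-(1-\alpha\lambda)(1+\beta)\,t+(1-\alpha\lambda)\beta .
\]
By Gelfand's formula $\|T_\lambda^{k}\|^{1/k}\to r(T_\lambda)$, the spectral radius of $T_\lambda$, so the smallest $\rho$ for which a bound $\|x_k-x^*\|^2\le(\rho+\epsilon_k)^{2k}\|x_0-x^*\|^2$ can hold for all $x_0$ with $\epsilon_k\to0$ is exactly $\rho=\max_\lambda r(T_\lambda)$, the maximum over the (finitely many) eigenvalues of $Q$.

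Next I would reduce to the eigenvalue $\lambda=\mu$ and compute the optimal $\beta$. Since $\alpha\le1/L$, $1-\alpha\lambda\in[0,1)$ for every $\lambda\in[\mu,L]$. When the discriminant of $\chi_\lambda$ is $\le0$ the roots are complex conjugate and $r(T_\lambda)=\sqrt{(1-\alpha\lambda)\beta}$ (their product equals $\det T_\lambda=(1-\alpha\lambda)\beta$); otherwise they are real and positive and $r(T_\lambda)$ is the larger one. In both regimes $r(T_\lambda)$ is continuous and non-increasing in $\lambda$ (a larger $\lambda$ means a smaller $1-\alpha\lambda$), so $\max_\lambda r(T_\lambda)$ is governed by the smallest eigenvalue, and the worst case over $S_{\mu,L}(\R^d)$ occurs at $\lambda=\mu$; it thus suffices to minimize $r(\beta):=r(T_\mu)$ over $\beta\ge0$. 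Writing $s:=\sqrt{\alpha\mu}\in(0,1]$ so $1-\alpha\mu=(1-s)(1+s)$, the discriminant of $\chi_\mu$ is $\le0$ precisely for $\beta\in[\beta_-,\beta_+]$ with $\beta_-=\tfrac{1-s}{1+s}$ and $\beta_+=\tfrac{1+s}{1-s}$; there $r(\beta)=\sqrt{(1-s^2)\beta}$ is increasing, hence minimized at $\beta=\beta_-$, where $r(\beta_-)=\sqrt{(1-s^2)\tfrac{1-s}{1+s}}=1-s$. For $\beta\in[0,\beta_-)$ the roots are real, and the algebraic identity $\chi_\mu(1-s)=(1-s)\,s(1+s)(\beta-\beta_-)<0$ shows that $1-s$ lies strictly between the two roots of the upward parabola $\chi_\mu$, so its larger root exceeds $1-s$; for $\beta\ge\beta_+>1$ one has $r(T_\mu)\ge\sqrt{\det T_\mu}=\sqrt{(1-s^2)\beta}>1-s$. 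Hence $\min_{\beta\ge0}r(\beta)=1-\sqrt{\alpha\mu}$, attained uniquely at $\beta_-=\tfrac{1-\sqrt{\alpha\mu}}{1+\sqrt{\alpha\mu}}$, giving the asserted values of $\rho$ and $\beta$.

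Finally I would convert the spectral-radius computation into the two displayed bounds. At $\beta=\beta_-$, $T_\mu$ has the double eigenvalue $1-s$ and is not a scalar matrix, hence is a single $2\times2$ Jordan block, so $\|T_\mu^{k}\|\le c_1(1+k)(1-s)^k$; each $T_\lambda$ with $\lambda>\mu$ has two distinct complex eigenvalues of modulus $\le\sqrt{(1-\alpha\lambda)\beta_-}<1-s$ and is diagonalizable, so $\|T_\lambda^{k}\|\le c_\lambda(1-s)^k$. Summing the finitely many coordinate contributions gives $\|x_k-x^*\|^2\le c_2(1+k)^2(1-s)^{2k}\|x_0-x^*\|^2$ with $c_2$ independent of $x_0$, and choosing $\epsilon_k:=(1-s)\big((c_2(1+k)^2)^{1/(2k)}-1\big)$, which tends to $0$ since $(1+k)^{1/k}\to1$, yields $\|x_k-x^*\|^2\le(1-\sqrt{\alpha\mu}+\epsilon_k)^{2k}\|x_0-x^*\|^2$. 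The function-value bound then follows from $f(x_k)-f^*=\tfrac12(x_k-x^*)^\top Q(x_k-x^*)\le\tfrac{L}{2}\|x_k-x^*\|^2\le L(1-\sqrt{\alpha\mu}+\epsilon_k)^{2k}\|x_0-x^*\|^2$.

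The hard part is the optimality direction in the second step: ruling out every competing $\beta$ requires the complex-root/real-root case split for $T_\mu$ together with the (mildly delicate) evaluation $\chi_\mu(1-\sqrt{\alpha\mu})=(1-s)\,s(1+s)(\beta-\beta_-)$, which pins down the sign needed to separate $1-s$ from the roots. A secondary subtlety is the Jordan-block bookkeeping that absorbs the polynomial prefactor $(1+k)^2$ into an admissible vanishing $\epsilon_k$, and the fact that the statement is naturally read as a worst-case guarantee over the class — a particular $f$ whose spectrum avoids $\mu$ converges strictly faster than $1-\sqrt{\alpha\mu}$, while the bound with this $\beta$ still holds for every such $f$.
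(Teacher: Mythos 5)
Your proposal is correct and follows essentially the same route as the paper's Appendix~A proof: both reduce to the per-eigenvalue quadratic characteristic polynomial, split on the sign of its discriminant at $\beta=\tfrac{1-\sqrt{\alpha\mu}}{1+\sqrt{\alpha\mu}}$, observe that the rate $\sqrt{\beta(1-\alpha\mu)}$ is increasing in $\beta$ in the complex-root regime, and verify algebraically that the larger real root exceeds $1-\sqrt{\alpha\mu}$ below the critical $\beta$. The only difference is one of self-containedness: you derive the spectral-radius characterization (companion matrix, Gelfand, Jordan-block bookkeeping for $\epsilon_k$) that the paper simply cites from the literature.
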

Our analysis builds on the reformulation of a first-order optimization algorithm as a linear dynamical system. Following \cite{lessard2016analysis,hu2017dissipativity}, we write ASG iterations as 
\begin{align}
\label{dyn_sys:main}
\xi_{k+1} = A \xi_k + B \tilde{\nabla}f(y_k, w_k),\qquad  
y_k = C \xi_k,  
\end{align}%
where 
$\xi_k :=\begin{bmatrix} 
	x_k^\top & x_{k-1}^\top
\end{bmatrix}^\top \in \mathbb{R}^{2d}$
is the \emph{state} vector 
and $A,B$ and $C$ are 
matrices with appropriate dimensions defined as the Kronecker products
$A = \tilde{A} \otimes I_d$, $B = \tilde{B} \otimes I_d$ and $C = \tilde{C} \otimes I_d$ with
\begin{align}\label{nest_ABCD}
\tilde{A} = \begin{bmatrix} 
	1+\be & -\be \\ 
    1 	  &  0 
\end{bmatrix},\quad 
\tilde{B} = \begin{bmatrix} 
	-\al  \\ 
     0 
\end{bmatrix}, \quad
 \tilde{C} = \begin{bmatrix} 
	1+\be & -\be  
\end{bmatrix}.
\end{align}
We can also relate the state $\xi_k$ to the iterate $x_k$ in a linear fashion through the identity
    $x_k = T \xi_k, \quad T \triangleq [I_d \quad 0_d]$.
We study the evolution of the ASG method through the following \textit{Lyapunov function} which also arises in the study of deterministic accelerated gradient methods:
\begin{equation}\label{lyapunov_def}
V_P(\xi) = (\xi - \xi^*)^\top P(\xi - \xi^*) + f(T\xi) - f^*
\end{equation}
where $P$ is a symmetric positive semi-definite matrix. 
We first state the following lemma which can be derived by 
adapting the proof of Proposition 4.6 in~\cite{RAGM} to our setting 
with less restrictive noise assumption compared to the additive noise model of \cite{RAGM}. 
Its proof can be found in Appendix \ref{proof_from_RAGM}. 
\begin{lem}\label{from_RAGM}
Let \Sml. 
Consider the ASG iterations given by \eqref{nest:main}. Assume there exist $\rho\in(0,1)$ and $\tilde{P} \in \mathbb{S}_{+}^{2}$, 
possibly depending on $\rho$, such that
\begin{equation}\label{LMI_nest}
\rho^2 \tilde{X_1} + (1-\rho^2)\tilde{X_2} \succeq
\begin{bmatrix} 
	\tilde{A}^\top \tilde{P} \tilde{A} - \rho^2 \tilde{P} & \tilde{A}^\top \tilde{P} \tilde{B}\\
    \tilde{B}^\top \tilde{P} \tilde{A} & \tilde{B}^\top \tilde{P} \tilde{B}
\end{bmatrix}
\end{equation}
where
{\small
\begin{align*}
\tilde{X_1} = \frac{1}{2}
\begin{bmatrix} 
	\be^2\mu & -\be^2\mu & -\be \\
    -\be^2\mu & \be^2\mu & \be  \\
    -\be  & \be  & \alpha(2-L\alpha) 
\end{bmatrix}, \quad
\tilde{X_2} = \frac{1}{2}
\begin{bmatrix} 
	(1+\be)^2\mu & -\be(1+\be)\mu & -(1+\be) \\
    -\be(1+\be)\mu & \be^2\mu & \be  \\
    -(1+\be)  & \be  & \alpha(2-L\alpha) 
\end{bmatrix}.
\end{align*}}%
Let $P = \tilde{P} \otimes I_d$. Then, for every $k \geq 0$, 
\begin{equation}\label{Ex_ineq}
\E\left [V_{P}(\xi_{k+1}) \right] \leq \rho^2 \E\left [V_{P}(\xi_{k}) \right] + \sigma^2 \al^2 (\tilde{P}_{1,1}+ \frac{L}{2}).  
\end{equation}
\end{lem}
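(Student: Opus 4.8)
The plan is to adapt the dissipativity/Lyapunov argument behind Proposition~4.6 of \cite{RAGM}, the only new ingredient being that the additive‑noise bookkeeping there is replaced by conditioning on $\xi_k$ and invoking \eqref{unbiased:main}. Write the oracle error as $e_k := \tilde{\nabla}f(y_k,w_k)-\nabla f(y_k)$, so that $\E[e_k\mid\xi_k]=0$ and $\E[\|e_k\|^2\mid\xi_k]\le\sigma^2$, and observe that $\xi^* := [\,(x^*)^\top\ (x^*)^\top\,]^\top$ satisfies $A\xi^*=\xi^*$ and $C\xi^*=x^*$ (using $\nabla f(x^*)=0$). Hence $\xi_{k+1}-\xi^* = s_k + B e_k$ where $s_k := A(\xi_k-\xi^*)+B\nabla f(y_k)$ is a deterministic function of $\xi_k$, and $x_{k+1}-y_k = -\al\big(\nabla f(y_k)+e_k\big)$.

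First I would peel off the two noise contributions by conditioning on $\xi_k$. For the quadratic part of $V_P$, expanding $(s_k+Be_k)^\top P(s_k+Be_k)$ kills the cross term since $\E[e_k\mid\xi_k]=0$, and since $B^\top PB = (\tilde{B}^\top\tilde{P}\tilde{B})\otimes\id = \al^2\tilde{P}_{1,1}\,\id$ we get, using $\tilde{P}_{1,1}\ge 0$, that $\E\big[(\xi_{k+1}-\xi^*)^\top P(\xi_{k+1}-\xi^*)\mid\xi_k\big]\le s_k^\top P s_k + \al^2\tilde{P}_{1,1}\sigma^2$. For the function‑value part, the smoothness (descent) side of \eqref{ineq_S} applied at $y_k$ to $x_{k+1}$ gives $f(x_{k+1})\le f(y_k) - \tfrac{\al(2-L\al)}{2}\|\nabla f(y_k)\|^2 + \al(L\al-1)\,\nabla f(y_k)^\top e_k + \tfrac{L\al^2}{2}\|e_k\|^2$; conditioning again removes the middle term and leaves the additive $\tfrac{L\al^2}{2}\sigma^2$. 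Summing, $\E[V_P(\xi_{k+1})\mid\xi_k]\le \bar V_k + \sigma^2\al^2\big(\tilde{P}_{1,1}+\tfrac L2\big)$, where $\bar V_k := s_k^\top P s_k + f(y_k)-f^* - \tfrac{\al(2-L\al)}{2}\|\nabla f(y_k)\|^2$ is the ``noiseless'' Lyapunov value.

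The core of the argument is the deterministic inequality $\bar V_k \le \rho^2 V_P(\xi_k)$, which is exactly where \eqref{LMI_nest} enters. Introduce $z_k := [\,(x_k-x^*)^\top\ (x_{k-1}-x^*)^\top\ \nabla f(y_k)^\top\,]^\top$; then $s_k^\top P s_k - \rho^2(\xi_k-\xi^*)^\top P(\xi_k-\xi^*) = z_k^\top(M\otimes\id)z_k$ with $M$ the right‑hand side matrix of \eqref{LMI_nest}. Next split $f(y_k)-f^* = \rho^2\big(f(y_k)-f^*\big)+(1-\rho^2)\big(f(y_k)-f^*\big)$ (both weights nonnegative since $\rho\in(0,1)$) and bound each piece by the strong‑convexity side of \eqref{ineq_S}: at $y_k$ against $x_k$, using $y_k-x_k = \be(x_k-x_{k-1})$, for the first piece, and at $y_k$ against $x^*$, using $y_k-x^* = (1+\be)(x_k-x^*)-\be(x_{k-1}-x^*)$, for the second. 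A direct expansion identifies these two quadratic forms in $z_k$ — together with the corresponding $\rho^2$‑ and $(1-\rho^2)$‑shares of the $-\tfrac{\al(2-L\al)}{2}\|\nabla f(y_k)\|^2$ term — with $-\rho^2\, z_k^\top(\tilde{X_1}\otimes\id)z_k$ and $-(1-\rho^2)\, z_k^\top(\tilde{X_2}\otimes\id)z_k$ respectively. Hence $\bar V_k - \rho^2 V_P(\xi_k)\le z_k^\top\big((M-\rho^2\tilde{X_1}-(1-\rho^2)\tilde{X_2})\otimes\id\big)z_k\le 0$ by \eqref{LMI_nest}, and taking total expectations in $\E[V_P(\xi_{k+1})\mid\xi_k]\le\rho^2 V_P(\xi_k)+\sigma^2\al^2(\tilde{P}_{1,1}+\tfrac L2)$ yields \eqref{Ex_ineq}.

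The only real obstacle is the bookkeeping in that last identification step: one must expand the three $3d\times 3d$ quadratic forms in $z_k$ and check that the coefficients match exactly after distributing the gradient‑norm term across the convexity weights $\rho^2$ and $1-\rho^2$ — this is precisely what adapting Proposition~4.6 of \cite{RAGM} amounts to. The more general noise model \eqref{unbiased:main} (versus the additive model of \cite{RAGM}) causes no extra difficulty, because $B^\top PB$ and the $\|e_k\|^2$‑coefficient of the descent lemma are scalar multiples of $\id$, so only the conditional second‑moment bound $\E[\|e_k\|^2\mid\xi_k]\le\sigma^2$ is used, not any boundedness or independence of $e_k$.
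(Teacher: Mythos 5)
Your proposal is correct and follows essentially the same route as the paper: your quadratic-part bound is exactly the paper's Lemma on $\mathcal{W}_P$ (the extension of Lemma~4.1 of \cite{RAGM}), your descent-lemma plus $\rho^2/(1-\rho^2)$ convexity-weighting step is exactly the paper's extension of Lemma~4.5 of \cite{RAGM}, and the final combination via \eqref{LMI_nest} and the PSD-ness of Kronecker products with $I_d$ mirrors their adaptation of Theorem~4.6 of \cite{RAGM}. The only difference is that you spell out explicitly the coefficient matching with $\tilde{X_1}$ and $\tilde{X_2}$ that the paper delegates to the cited reference; the details check out.
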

We 
use this lemma and derive the following theorem which characterize the behavior of ASG method for 
when $\alpha\in(0,1/L]$ and $\beta=\tfrac{1-\sqrt{\al\mu}}{1+\sqrt{\al\mu}}$ (see the proof in Appendix \ref{proof_be_al_thm}).
\begin{thm}\label{be_al_thm}
Let \Sml. 
Consider the ASG iterations given 
in \eqref{nest:main} with $\alpha \in (0,\frac{1}{L}]$ and $\beta=\tfrac{1-\sqrt{\al\mu}}{1+\sqrt{\al\mu}}$. Then,
\begin{equation}\label{be_al_ineq}
\E\left [V_{P_\al}(\xi_{k+1}) \right] \leq (1-\sqrt{\al \mu}) \E\left [V_{P_\al}(\xi_{k}) \right] + \frac{\sigma^2 \al}{2}(1+ \al L)  
\end{equation}
for every $k \geq 0$, where $P_\al = \tilde{P}_\al \otimes I_d$ with
$\tilde{P}_\al = {\scriptstyle \begin{bmatrix}
\sqrt{\tfrac{1}{2\al}}\\ \sqrt{\tfrac{\mu}{2}}-\sqrt{\tfrac{1}{2\al}}
\end{bmatrix}
\begin{bmatrix}
\sqrt{\tfrac{1}{2\al}}& \sqrt{\tfrac{\mu}{2}}-\sqrt{\tfrac{1}{2\al}}
\end{bmatrix}}$.
\end{thm}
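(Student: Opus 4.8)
The plan is to obtain \eqref{be_al_ineq} directly from Lemma \ref{from_RAGM} by exhibiting that the stated $\tilde P_\alpha$ certifies the linear matrix inequality \eqref{LMI_nest} with contraction factor $\rho^2 = 1-\sqrt{\al\mu}$. First I would record the bookkeeping that makes the constants line up. Since $\tilde P_\alpha$ is the rank-one matrix $vv^\top$ with $v = \bigl(\sqrt{1/(2\al)},\ \sqrt{\mu/2}-\sqrt{1/(2\al)}\bigr)^\top$, its $(1,1)$ entry is $\tilde P_{\alpha,1,1} = 1/(2\al)$; hence the additive term in \eqref{Ex_ineq}, namely $\sigma^2\al^2(\tilde P_{\alpha,1,1}+L/2)$, equals $\tfrac{\sigma^2\al}{2}(1+\al L)$, exactly the constant claimed in \eqref{be_al_ineq}, while $\rho^2 = 1-\sqrt{\al\mu}$ is the claimed per-step factor. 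Therefore the theorem follows as soon as one checks that the triple $\rho^2 = 1-\sqrt{\al\mu}$, $\be=\tfrac{1-\sqrt{\al\mu}}{1+\sqrt{\al\mu}}$, $\tilde P=\tilde P_\alpha$ satisfies \eqref{LMI_nest}; note $\tilde P_\alpha\in\mathbb{S}^2_+$ is immediate as it is a rank-one outer product.

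To verify the LMI I would introduce the shorthand $q := \sqrt{\al\mu}\in(0,1]$, so that $\be=\tfrac{1-q}{1+q}$, $\rho^2 = 1-q$, $1-\rho^2 = q$, and the hypothesis $\al\le 1/L$ reads $\al L\le 1$ (equivalently $q^2 = \al\mu \le \mu/L$); in particular $\al(2-L\al)\ge\al$. Because $\tilde P_\alpha = vv^\top$, writing $a := \tilde A^\top v\in\R^2$ and $b := \tilde B^\top v\in\R$ turns the right-hand side of \eqref{LMI_nest} into the explicit matrix $\begin{bmatrix} a \\ b \end{bmatrix}\begin{bmatrix} a^\top & b \end{bmatrix} - \rho^2\begin{bmatrix} vv^\top & 0 \\ 0 & 0 \end{bmatrix}$, a rank-one positive semidefinite matrix minus $\rho^2$ times a rank-one positive semidefinite matrix. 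I would then assemble the symmetric $3\times 3$ matrix $M := \rho^2\tilde X_1 + (1-\rho^2)\tilde X_2 + \rho^2\begin{bmatrix} vv^\top & 0 \\ 0 & 0\end{bmatrix} - \begin{bmatrix} a \\ b\end{bmatrix}\begin{bmatrix} a^\top & b\end{bmatrix}$, express each entry as a function of $q,\al,\mu,L$, and prove $M\succeq 0$. Two equivalent routes: (i) Sylvester's criterion, checking $M_{1,1}\ge 0$, the leading $2\times 2$ minor $\ge 0$, and $\det M\ge 0$; or (ii) since the subtracted term is rank one, first show $S := \rho^2\tilde X_1+(1-\rho^2)\tilde X_2+\rho^2\,\mathrm{diag}(vv^\top,0)\succeq 0$ and then reduce the matrix statement to the single scalar inequality $\binom{a}{b}^\top S^{+}\binom{a}{b}\le 1$ via the Schur-complement characterization (on the range of $S$).

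I expect the main obstacle to be purely computational: the raw entries of $M$ (equivalently, of $S$ and $\binom{a}{b}$) are rational expressions in $\sqrt{\al}$ and $q$ that look unwieldy, and the work is to choose the grouping — writing everything over the common factor $(1+q)^2$ and in terms of $q$ and $\sqrt{\al}$ — so that the principal minors visibly factor into products of manifestly nonnegative quantities, with the constraint $\al L\le 1$ entering exactly at the $(3,3)$ entry through $\al(2-L\al)\ge\al$. A useful sanity check throughout is that $\tilde P_\alpha$ is engineered to make the bound tight at this rate, so $M$ should come out singular (indeed $\tilde X_1$ and $\tilde X_2$ are each already rank-deficient) and $\det M$ should collapse to $0$; this both guides the factorization and confirms that $1-\sqrt{\al\mu}$ is the correct contraction factor for this Lyapunov function. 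Once $M\succeq 0$ is established, Lemma \ref{from_RAGM} yields \eqref{be_al_ineq} verbatim.
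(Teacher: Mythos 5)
Your proposal follows essentially the same route as the paper's proof in Appendix~\ref{proof_be_al_thm}: invoke Lemma~\ref{from_RAGM} with $\rho^2=1-\sqrt{\al\mu}$ and $\tilde P=\tilde P_\al$, check that $\tilde P_{\al,1,1}=\tfrac{1}{2\al}$ turns the noise constant into $\tfrac{\sigma^2\al}{2}(1+\al L)$, and reduce everything to showing that the $3\times 3$ slack matrix $\Gamma$ (your $M$) is positive semidefinite; the paper likewise computes $\Gamma$'s entries symbolically, finds $\Gamma_{2,3}=0$, $\Gamma_{2,2}\geq 0$, $\Gamma_{3,3}=\tfrac{\al(1-L\al)}{2}$ and $\det\Gamma=0$, and your anticipation that $M$ comes out singular is confirmed. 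One caution on your route (i): nonnegativity of the \emph{leading} principal minors does not certify positive semidefiniteness (only positive definiteness via strict positivity), and here $\Gamma$ is singular with $\Gamma_{3,3}=0$ when $\al=1/L$; the paper circumvents this by treating $\al=1/L$ separately (where $\Gamma$ is an explicit PSD rank-one matrix) and, for $\al<1/L$, perturbing the $(1,1)$ entry by $\epsilon>0$ so that Sylvester's criterion applies to $\Gamma^\epsilon\succ 0$ before letting $\epsilon\to 0$ --- you would need this (or your Schur-complement route (ii)) to close the argument rigorously.
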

This result relies on the special structure of $P_\alpha$ which will also be key for our analysis in Section~\ref{MASG}. 
\section{A class of multistage ASG algorithms}\label{MASG}
In this section, we introduce a class of \textit{multistage} ASG algorithms, represented in Algorithm \ref{Algorithm1} which we denote by M-ASG. The main idea is to run ASG with properly chosen parameters $(\alpha_k, \beta_k)$ at each stage $k\in{1,\ldots, K}$ for $K\geq 2$ stages. 
In addition, each new stage is dependent on the previous stage as the first two initial iterates of the new stage are set to the last iterate of the previous stage.


\begin{algorithm}
    \SetKwInOut{Input}{Input}
    \SetKwInOut{Output}{Output}
    Set $n_0 = -1$;\\
    \For{$k = 1;\ k \leq K;\ k = k + 1$}{
    Set $x_0^k = x_1^k = x_{n_{k-1}+1}^{k-1}$;\\
    \For{$m = 1;\ m \leq n_k;\ m = m + 1$}{
    Set $\be_k = \frac{1- \sqrt{\mu \al_k}}{1+ \sqrt{\mu \al_k}}$;\\
    Set $y_m^k = (1+\beta_k)x_m^k - \beta_k x_{m-1}^k$;\\
    Set $x_{m+1}^k = y_m^k - \alpha_k \tilde{\nabla} f(y_{m}^k, w_{m}^k)$
    }
    }
    \caption{\label{Algorithm1} Multistage Accelerated Stochastic Gradient Algorithm (M-ASG)}
\end{algorithm}

To analyze Algorithm \ref{Algorithm1}, 
we first characterize the evolution of iterates in one specific stage through the Lyapunov function in \eqref{lyapunov_def}. The details of the proof is provided in Appendix \ref{proof_non_assym_thm}.
\begin{thm}\label{non_assym_thm}
Let \Sml. 
Consider running the ASG method given in \eqref{nest:main} for $n$ iterations with $\alpha = \frac{c^2}{L}$ and $\beta=\tfrac{1-\sqrt{\al\mu}}{1+\sqrt{\al\mu}}$ for some $0<c \leq 1$. Then, for $P_\al$ given in Theorem \ref{be_al_thm},
\begin{equation}\label{Lyapunov_stage}
\E\left [V_{P_\al}(\xi_{n+1}) \right] \leq \exp(-n\frac{c}{\sqrt{\kappa}}) \E\left [V_{P_\al}(\xi_{1}) \right] + \frac{\sigma^2 \sqrt{\kappa} c}{L}.
\end{equation}
\end{thm}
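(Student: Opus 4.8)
The plan is to apply Theorem~\ref{be_al_thm} iteratively over the $n$ iterations of the stage and then simplify the resulting geometric sum. First I would substitute $\alpha = c^2/L$ into the one-step bound \eqref{be_al_ineq}. With this choice, $\sqrt{\alpha\mu} = c\sqrt{\mu/L} = c/\sqrt{\kappa}$, so the contraction factor becomes $1 - c/\sqrt{\kappa}$, and the additive noise term becomes $\tfrac{\sigma^2 \alpha}{2}(1+\alpha L) = \tfrac{\sigma^2 c^2}{2L}(1 + c^2) \leq \tfrac{\sigma^2 c^2}{L}$ since $0 < c \leq 1$. Thus Theorem~\ref{be_al_thm} gives, for every $k \geq 0$,
\begin{equation*}
\E\left[V_{P_\alpha}(\xi_{k+1})\right] \leq \left(1 - \tfrac{c}{\sqrt{\kappa}}\right)\E\left[V_{P_\alpha}(\xi_k)\right] + \tfrac{\sigma^2 c^2}{L}.
\end{equation*}

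Next I would unroll this recursion from $k=1$ to $k=n$. Denoting $q \triangleq 1 - c/\sqrt{\kappa} \in [0,1)$, induction yields
\begin{equation*}
\E\left[V_{P_\alpha}(\xi_{n+1})\right] \leq q^{n}\,\E\left[V_{P_\alpha}(\xi_1)\right] + \tfrac{\sigma^2 c^2}{L}\sum_{j=0}^{n-1} q^{j} \leq q^{n}\,\E\left[V_{P_\alpha}(\xi_1)\right] + \tfrac{\sigma^2 c^2}{L}\cdot\tfrac{1}{1-q}.
\end{equation*}
Since $1 - q = c/\sqrt{\kappa}$, the variance term simplifies to $\tfrac{\sigma^2 c^2}{L}\cdot\tfrac{\sqrt{\kappa}}{c} = \tfrac{\sigma^2 \sqrt{\kappa}\, c}{L}$, which matches the claimed bound. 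For the bias term I would use the standard inequality $1 - t \leq e^{-t}$ with $t = c/\sqrt{\kappa}$, giving $q^{n} \leq \exp(-n c/\sqrt{\kappa})$, which produces exactly $\exp(-n\tfrac{c}{\sqrt{\kappa}})\,\E[V_{P_\alpha}(\xi_1)]$.

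There are two minor points to be careful about rather than a genuine obstacle. The first is the index bookkeeping: Theorem~\ref{be_al_thm} is stated for the iteration $\xi_k \to \xi_{k+1}$ starting from $k \geq 0$, while here the stage is indexed so that its iterates run $\xi_1, \xi_2, \ldots, \xi_{n+1}$ (matching the algorithm's convention that $x_0^k = x_1^k$); I would just note that the recursion applies verbatim on this range. The second is confirming the constant: one needs $1 + c^2 \leq 2$, which holds because $c \leq 1$, so bounding $\tfrac{\sigma^2 c^2}{2L}(1+c^2)$ by $\tfrac{\sigma^2 c^2}{L}$ is legitimate and keeps the final constant clean. Everything else is the routine summation of a geometric series, so I do not anticipate any real difficulty — the content of the theorem is essentially a clean repackaging of Theorem~\ref{be_al_thm} with the stepsize parameterization $\alpha = c^2/L$ chosen precisely so that the rate exponent reads $c/\sqrt{\kappa}$ and the variance floor reads $\sigma^2\sqrt{\kappa}\,c/L$.
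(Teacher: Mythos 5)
Your proposal is correct and matches the paper's own proof in Appendix E essentially line for line: substitute $\alpha = c^2/L$ into Theorem \ref{be_al_thm}, bound the noise term using $1+c^2\leq 2$, unroll the recursion, bound the geometric sum by $\sqrt{\kappa}/c$, and apply $1-t\leq e^{-t}$ to the contraction factor. No gaps.
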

Given a computational budget of $n$ iterations, we use this result to choose a stepsize 
that help us achieve an approximately optimal decay in the {\it variance term} which yields the following corollary for M-ASG algorithm with $K=1$ stage, and its proof can be found in Appendix \ref{proof_1stage}.
\begin{corr}\label{1stage_result}
Let \Sml. 
Consider running M-ASG, i.e., Algorithm \ref{Algorithm1}, for only one stage with $n_1 = n$ iterations and stepsize $\alpha_1 = \left (\frac{p \sqrt{\kappa} \log n}{n}\right)^2 \frac{1}{L}$ for some scalar $p\geq 1$. Then, 
\begin{equation}\label{1_stage_bound}
\E \left [f(x_{n+1}^1)\right] - f^* \leq \frac{2}{n^p} (f(x_0^0)-f^*) + \frac{p \sigma^2 \log n}{n \mu}
\end{equation}
provided that $n \geq p \sqrt{\kappa} \max\{2\log(p \sqrt{\kappa}),e\}$.
\end{corr}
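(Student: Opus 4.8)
The plan is to specialize Theorem \ref{non_assym_thm} to the stepsize $\alpha_1 = \left(\frac{p\sqrt{\kappa}\log n}{n}\right)^2\frac{1}{L}$, so that $c = \frac{p\sqrt{\kappa}\log n}{n}$. For this to be a valid stepsize we need $c \le 1$, i.e. $n \ge p\sqrt{\kappa}\log n$, which is implied by the stated hypothesis $n \ge p\sqrt{\kappa}\max\{2\log(p\sqrt{\kappa}), e\}$ (this sub-claim about $n \ge p \sqrt\kappa \log n$ is the one routine inequality I would need to verify, using $\log n \le \log(p\sqrt\kappa) + \log(n/(p\sqrt\kappa))$ and bounding each piece). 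With this choice, the contraction factor becomes $\exp(-n \cdot \frac{c}{\sqrt{\kappa}}) = \exp(-p\log n) = n^{-p}$, and the additive variance term becomes $\frac{\sigma^2\sqrt{\kappa}c}{L} = \frac{\sigma^2\sqrt{\kappa}}{L}\cdot\frac{p\sqrt{\kappa}\log n}{n} = \frac{p\sigma^2\log n}{\mu n}$, using $\kappa = L/\mu$. So Theorem \ref{non_assym_thm} directly gives
\begin{equation*}
\E[V_{P_{\alpha_1}}(\xi_{n+1})] \le n^{-p}\,\E[V_{P_{\alpha_1}}(\xi_1)] + \frac{p\sigma^2\log n}{\mu n}.
\end{equation*}

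Next I would translate the Lyapunov-function inequality into the claimed bound on function suboptimality. For the left side, since $P_{\alpha_1} \succeq 0$, the definition \eqref{lyapunov_def} gives $f(T\xi_{n+1}) - f^* \le V_{P_{\alpha_1}}(\xi_{n+1})$; and $T\xi_{n+1} = x_{n+1}^1$, so $\E[f(x_{n+1}^1)] - f^* \le \E[V_{P_{\alpha_1}}(\xi_{n+1})]$. For the right side I need to bound $\E[V_{P_{\alpha_1}}(\xi_1)]$ in terms of $f(x_0^0) - f^*$. At the start of stage one the initialization is $x_0^1 = x_1^1 = x_0^0$, so $\xi_1 = [(x_0^0)^\top\ (x_0^0)^\top]^\top$ and $\xi_1 - \xi^* = \mathbf{1}\otimes(x_0^0 - x^*)$ in the block sense. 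Then $(\xi_1-\xi^*)^\top P_{\alpha_1}(\xi_1-\xi^*) = \big(\mathbf{1}^\top \tilde P_{\alpha_1}\mathbf{1}\big)\|x_0^0 - x^*\|^2$. Using the rank-one form $\tilde P_{\alpha_1} = vv^\top$ with $v = \big(\sqrt{\tfrac{1}{2\alpha_1}},\ \sqrt{\tfrac{\mu}{2}}-\sqrt{\tfrac{1}{2\alpha_1}}\big)^\top$, we get $\mathbf{1}^\top\tilde P_{\alpha_1}\mathbf{1} = (\mathbf{1}^\top v)^2 = \big(\sqrt{\mu/2}\big)^2 = \mu/2$, a pleasant cancellation. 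Hence $(\xi_1-\xi^*)^\top P_{\alpha_1}(\xi_1-\xi^*) = \frac{\mu}{2}\|x_0^0-x^*\|^2 \le f(x_0^0) - f^*$ by strong convexity \eqref{ineq_S} at $y = x^*$ (where $\nabla f(x^*) = 0$). Adding the $f(T\xi_1) - f^* = f(x_0^0) - f^*$ term, $\E[V_{P_{\alpha_1}}(\xi_1)] \le 2\big(f(x_0^0) - f^*\big)$.

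Combining these gives $\E[f(x_{n+1}^1)] - f^* \le \frac{2}{n^p}(f(x_0^0)-f^*) + \frac{p\sigma^2\log n}{\mu n}$, which is exactly \eqref{1_stage_bound}. The only remaining piece is confirming that the hypothesis $n \ge p\sqrt{\kappa}\max\{2\log(p\sqrt\kappa), e\}$ indeed implies $c \le 1$ so Theorem \ref{non_assym_thm} applies; the $\max$ with $e$ handles the small-$\kappa$ regime where $\log(p\sqrt\kappa)$ could be tiny or negative, and the factor $2$ gives enough slack to absorb the $\log(n/(p\sqrt\kappa))$ contribution via an inequality like $\log t \le t/2$ for $t \ge$ a suitable constant. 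I expect the main (mild) obstacle to be nothing conceptual but rather carefully discharging this stepsize-admissibility inequality; everything else is a direct substitution plus the rank-one cancellation that makes the constant come out to exactly $\mu/2$.
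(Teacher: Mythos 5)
Your proposal is correct and follows essentially the same route as the paper: verify that the hypothesis forces $c=\tfrac{p\sqrt{\kappa}\log n}{n}\le 1$ by writing $n=p\sqrt{\kappa}\,n_0$ and bounding $\tfrac{\log(p\sqrt{\kappa})}{n_0}\le\tfrac12$ and $\tfrac{\log n_0}{n_0}\le\tfrac1e\le\tfrac12$, then apply Theorem \ref{non_assym_thm}. The details you fill in --- the rank-one cancellation $\mathbf{1}^\top\tilde P_{\alpha_1}\mathbf{1}=\mu/2$ yielding $\E[V_{P_{\alpha_1}}(\xi_1)]\le 2(f(x_0^0)-f^*)$ --- are exactly what the paper leaves implicit (it reuses the computation from the proof of Lemma \ref{connector}), so nothing is missing.
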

For subsequent analysis, \sa{given $K\geq 1$, for all $1 \leq k \leq K$,} we define the state vector 
$\xi_i^k = \left [{x_i^k}^\top,{x_{i-1}^k}^\top \right ]^\top$
for $1 \leq i \leq n_k+1$ \sa{--recall that $x_0^k = x_1^k = x_{n_{k-1}+1}^{k-1}$, where $K$ is the number of stages.}
We analyze the performance of each stage with respect to a stage-dependent Lyapunov function $V_{P_{\alpha_k}}$. The following lemma relates the performance bounds with respect to consecutive choice of Lyapunov functions, building on our specific restarting mechanism (The proof can be found in Appendix \ref{proof_connector}). 
\begin{lem}\label{connector}
Let \Sml. 
Consider M-ASG, i.e., Algorithm \ref{Algorithm1}. Then, for every $1 \leq k \leq K-1$,
\begin{equation}\label{connecting_stages}
\E\left [V_{P_{\al_{k+1}}}(\xi_{1}^{k+1}) \right] \leq 2 \E\left [V_{P_{\al_k}}(\xi_{n_k+1}^k)\right].
\end{equation}
\end{lem}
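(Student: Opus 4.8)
The plan is to relate the Lyapunov value at the start of stage $k+1$ to the Lyapunov value at the end of stage $k$, using the key fact from the restart rule that $x_0^{k+1} = x_1^{k+1} = x_{n_k+1}^k$, so the velocity component vanishes at the moment of restart. Concretely, writing $\xi_1^{k+1} = [\,{x_1^{k+1}}^\top,\ {x_0^{k+1}}^\top\,]^\top$, both blocks equal $x_{n_k+1}^k$, whereas $\xi_{n_k+1}^k = [\,{x_{n_k+1}^k}^\top,\ {x_{n_k}^k}^\top\,]^\top$ carries a generally nonzero difference $x_{n_k+1}^k - x_{n_k}^k$. The first step is therefore to expand $V_{P_{\al_{k+1}}}(\xi_1^{k+1})$ using the definition \eqref{lyapunov_def}: the function-value term $f(T\xi_1^{k+1}) - f^* = f(x_{n_k+1}^k) - f^*$ is \emph{identical} to the function-value term appearing in $V_{P_{\al_k}}(\xi_{n_k+1}^k)$, so that part passes through for free.

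The second step is to control the quadratic term $(\xi_1^{k+1} - \xi^*)^\top P_{\al_{k+1}} (\xi_1^{k+1} - \xi^*)$. Here I would exploit the explicit rank-one structure of $\tilde P_\al$ from Theorem \ref{be_al_thm}: $\tilde P_\al = v_\al v_\al^\top$ with $v_\al = \bigl[\sqrt{1/(2\al)},\ \sqrt{\mu/2} - \sqrt{1/(2\al)}\bigr]^\top$, hence $P_\al = (v_\al v_\al^\top)\otimes I_d$ and $(\xi - \xi^*)^\top P_\al (\xi - \xi^*) = \bigl\| (v_\al^\top \otimes I_d)(\xi - \xi^*)\bigr\|^2$. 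When $\xi = \xi_1^{k+1}$ has both blocks equal to $x_{n_k+1}^k$, the vector $(v_\al^\top \otimes I_d)(\xi_1^{k+1} - \xi^*)$ equals $(v_{\al,1} + v_{\al,2})(x_{n_k+1}^k - x^*) = \sqrt{\mu/2}\,(x_{n_k+1}^k - x^*)$, since the two components of $v_\al$ sum to $\sqrt{\mu/2}$ regardless of $\al$. Thus the quadratic term of $V_{P_{\al_{k+1}}}(\xi_1^{k+1})$ is exactly $\tfrac{\mu}{2}\|x_{n_k+1}^k - x^*\|^2$, and the crucial point is that this value does \emph{not} depend on the stepsize $\al_{k+1}$ at all.

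The third step is to bound $\tfrac{\mu}{2}\|x_{n_k+1}^k - x^*\|^2$ by $2\,V_{P_{\al_k}}(\xi_{n_k+1}^k)$. Since $V_{P_{\al_k}}(\xi_{n_k+1}^k) = \|(v_{\al_k}^\top\otimes I_d)(\xi_{n_k+1}^k - \xi^*)\|^2 + \bigl(f(x_{n_k+1}^k)-f^*\bigr)$ and the function-value term is nonnegative, it suffices to show $\tfrac{\mu}{2}\|x_{n_k+1}^k - x^*\|^2 \le 2\|(v_{\al_k}^\top\otimes I_d)(\xi_{n_k+1}^k - \xi^*)\|^2 + 2\bigl(f(x_{n_k+1}^k)-f^*\bigr)$; in fact I expect the cleaner route is to compare directly against the \emph{same} quantity $\tfrac{\mu}{2}\|x_{n_k+1}^k-x^*\|^2$ reconstructed from $V_{P_{\al_k}}$. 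By strong convexity \eqref{ineq_S} applied at $y = x^*$ (where $\nabla f(x^*)=0$), $f(x_{n_k+1}^k) - f^* \ge \tfrac{\mu}{2}\|x_{n_k+1}^k - x^*\|^2$, so the function-value term of $V_{P_{\al_k}}(\xi_{n_k+1}^k)$ alone already dominates $\tfrac{\mu}{2}\|x_{n_k+1}^k - x^*\|^2$; combined with nonnegativity of the quadratic term this gives $\tfrac{\mu}{2}\|x_{n_k+1}^k-x^*\|^2 \le V_{P_{\al_k}}(\xi_{n_k+1}^k) \le 2\,V_{P_{\al_k}}(\xi_{n_k+1}^k)$, and in fact the constant can be taken as $1$; the slack factor of $2$ in the statement leaves room and follows a fortiori. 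Chaining the three steps and taking expectations yields \eqref{connecting_stages}.

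The main obstacle, and the part that requires care rather than routine calculation, is the second step: verifying that the components of $v_\al$ sum to $\sqrt{\mu/2}$ independently of $\al$, which is exactly the ``special structure of $P_\al$'' flagged after Theorem \ref{be_al_thm}, and recognizing that this is precisely what makes the restart work — the quadratic part of the new stage's Lyapunov function, evaluated at a zero-velocity state, collapses to a stepsize-independent multiple of $\|x - x^*\|^2$, which can then be absorbed by the previous stage's Lyapunov value without any dependence on the ratio $\al_k/\al_{k+1}$. Everything else is bookkeeping with the Kronecker structure and one application of strong convexity.
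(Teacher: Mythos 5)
Your proposal is correct and follows essentially the same route as the paper's proof: both exploit that the restart makes the two blocks of $\xi_1^{k+1}$ equal, so the rank-one structure of $\tilde P_{\al}$ (with components summing to $\sqrt{\mu/2}$) collapses the quadratic term to $\tfrac{\mu}{2}\|x_{n_k+1}^k - x^*\|^2$ independently of $\al_{k+1}$, after which strong convexity bounds this by $f(x_{n_k+1}^k)-f^*$ and the nonnegativity of the quadratic part of $V_{P_{\al_k}}$ supplies the factor $2$. The paper performs the same collapse by expanding the quadratic form explicitly rather than via the sum of the components of $v_\al$, but the argument is identical in substance.
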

Now, we are ready to state and prove the main result of the paper (see proof in Appendix \ref{proof_main_result}):
\begin{thm}\label{main_result}
Let \Sml. 
Consider running M-ASG , i.e., Algorithm \ref{Algorithm1}, with 
some $n_1 \geq 1$ and $\al_1 = \frac{1}{L}$ and fixing $n_k = 2^{k} \ceil{\sqrt{\kappa }\log(2^{p+2})}$ and $\al_k = \frac{1}{2^{2k}L}$
for any $k \geq 2$ and $p \geq 1$.
The last iterate of each stage, i.e., $x_{n_k+1}^k$, satisfies the following bound for all $k\geq 1$:\vspace*{-2mm}
\begin{align}\label{k_stage_bound}
 \E&\left [f(x_{n_k+1}^k)\right] - f^* \leq \frac{2}{2^{(p+1)(k-1)}} \left ( \exp(-n_1/\sqrt{\kappa}) (f(x_0^0)-f^*)\right ) + \frac{\sigma^2 \sqrt{\kappa}}{L 2^{k-1}}.
\end{align}
\end{thm}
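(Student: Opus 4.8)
The plan is to prove \eqref{k_stage_bound} by induction on $k$, tracking the quantity $\E[V_{P_{\al_k}}(\xi_{n_k+1}^k)]$ across stages and only at the very end converting the Lyapunov bound back to a bound on $\E[f(x_{n_k+1}^k)] - f^*$. The two workhorses are Theorem \ref{non_assym_thm}, which controls the decay of $V_{P_{\al_k}}$ within a single stage of length $n_k$ with stepsize $\al_k = c_k^2/L$, and Lemma \ref{connector}, which lets us pass from the end of stage $k$ to the start of stage $k+1$ at the cost of a factor $2$. With $\al_k = 1/(2^{2k}L)$ for $k\ge 2$, we have $c_k = 2^{-k}$, so Theorem \ref{non_assym_thm} gives, for each $k\ge 2$,
\begin{equation*}
\E\left[V_{P_{\al_k}}(\xi_{n_k+1}^k)\right] \leq \exp\!\left(-\frac{n_k}{2^{k}\sqrt{\kappa}}\right)\E\left[V_{P_{\al_k}}(\xi_{1}^{k})\right] + \frac{\sigma^2\sqrt{\kappa}}{2^{k}L}.
\end{equation*}
The choice $n_k = 2^{k}\ceil{\sqrt{\kappa}\log(2^{p+2})}$ is exactly engineered so that $\exp(-n_k/(2^{k}\sqrt{\kappa})) \le \exp(-\log(2^{p+2})) = 2^{-(p+2)}$; this is the key algebraic cancellation, and it makes the contraction factor per stage a clean constant $2^{-(p+2)}$ independent of $k$.

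First I would set up the base case. For stage $1$ with $\al_1 = 1/L$ (i.e. $c_1 = 1$) and length $n_1$, Theorem \ref{non_assym_thm} yields
\begin{equation*}
\E\left[V_{P_{\al_1}}(\xi_{n_1+1}^1)\right] \leq \exp\!\left(-\frac{n_1}{\sqrt{\kappa}}\right)\E\left[V_{P_{\al_1}}(\xi_{1}^1)\right] + \frac{\sigma^2\sqrt{\kappa}}{L},
\end{equation*}
and since $\xi_1^1$ is built from $x_0^0 = x_1^0$, the initial Lyapunov value $\E[V_{P_{\al_1}}(\xi_1^1)]$ reduces to $f(x_0^0) - f^*$ (the quadratic term vanishes because the two coordinates of the state coincide, exploiting the rank-one structure of $\tilde P_{\al_1}$). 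Using $f(x_{n_1+1}^1) - f^* \le \E[V_{P_{\al_1}}(\xi_{n_1+1}^1)]$ (valid since $P_{\al_1}\succeq 0$ and the Lyapunov function dominates the function-value gap), this already gives \eqref{k_stage_bound} for $k=1$ with room to spare, since $2^{(p+1)\cdot 0} = 1$ and the variance constant matches.

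Next I would do the inductive step. Assume the bound on $\E[V_{P_{\al_k}}(\xi_{n_k+1}^k)]$ of the form $B_{k} := \frac{2}{2^{(p+1)(k-1)}}\exp(-n_1/\sqrt{\kappa})(f(x_0^0)-f^*) + \frac{\sigma^2\sqrt{\kappa}}{2^{k-1}L}$ holds at the end of stage $k$ (this is the natural "Lyapunov version" of \eqref{k_stage_bound}; I would actually phrase the induction hypothesis directly in terms of $V$). Apply Lemma \ref{connector} to get $\E[V_{P_{\al_{k+1}}}(\xi_1^{k+1})] \le 2 B_k$, then apply the displayed single-stage bound for stage $k+1$:
\begin{equation*}
\E\left[V_{P_{\al_{k+1}}}(\xi_{n_{k+1}+1}^{k+1})\right] \leq 2^{-(p+2)}\cdot 2 B_k + \frac{\sigma^2\sqrt{\kappa}}{2^{k+1}L} = 2^{-(p+1)}B_k + \frac{\sigma^2\sqrt{\kappa}}{2^{k+1}L}.
\end{equation*}
Expanding $B_k$, the bias term becomes $2^{-(p+1)}\cdot\frac{2}{2^{(p+1)(k-1)}}\exp(-n_1/\sqrt\kappa)(f(x_0^0)-f^*) = \frac{2}{2^{(p+1)k}}\exp(-n_1/\sqrt\kappa)(f(x_0^0)-f^*)$, matching the $k+1$ form. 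The variance term becomes $2^{-(p+1)}\cdot\frac{\sigma^2\sqrt\kappa}{2^{k-1}L} + \frac{\sigma^2\sqrt\kappa}{2^{k+1}L}$; since $p\ge 1$ the first piece is at most $\frac{\sigma^2\sqrt\kappa}{2^{k+1}L}$, so the sum is at most $\frac{2\sigma^2\sqrt\kappa}{2^{k+1}L} = \frac{\sigma^2\sqrt\kappa}{2^{k}L}$, which is exactly the $k+1$ variance constant. Finally, bounding $f(x_{n_{k+1}+1}^{k+1}) - f^* \le \E[V_{P_{\al_{k+1}}}(\xi_{n_{k+1}+1}^{k+1})]$ closes the induction.

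The main obstacle is bookkeeping rather than any deep inequality: one must be careful that the ceiling in $n_k$ only helps (it makes $n_k$ larger, hence the contraction $\exp(-n_k/(2^k\sqrt\kappa))$ at least as small as $2^{-(p+2)}$), that the variance constants telescope correctly using $p\ge 1$, and that the passage from $V_{P_{\al_k}}$ to the function-value gap (and the reduction of the initial Lyapunov value to $f(x_0^0)-f^*$) genuinely uses the rank-one form of $\tilde P_{\al_k}$ together with $x_0^k = x_1^k$. A secondary point worth checking is whether Theorem \ref{non_assym_thm} is applied with the iteration count $n_k$ exactly as the "number of iterations $n$" in that theorem; since each stage runs the inner loop for $m = 1,\dots,n_k$, the indexing lines up, and the $\xi_1^k$ there is the $\xi_1^{k}$ defined by the restart.
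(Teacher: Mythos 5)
Your proposal follows the paper's own proof essentially verbatim: induction on the Lyapunov values $\E[V_{P_{\al_k}}(\xi_{n_k+1}^k)]$, Theorem~\ref{non_assym_thm} with $c_k=2^{-k}$ giving the per-stage contraction $\exp(-n_k/(2^k\sqrt{\kappa}))\le 2^{-(p+2)}$ and variance increment $\sigma^2\sqrt{\kappa}/(2^kL)$, Lemma~\ref{connector} supplying the factor $2$ at each restart, and the $p\ge 1$ telescoping $2^{-(p+1)}\cdot\tfrac{1}{2^{k-1}}+\tfrac{1}{2^{k+1}}\le\tfrac{1}{2^k}$ for the variance constants. The one inaccuracy is in your base case: the quadratic term of $V_{P_{\al_1}}(\xi_1^1)$ does \emph{not} vanish when the two state coordinates coincide. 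From the rank-one form $\tilde{P}_{\al}=vv^\top$ with $v=(\sqrt{1/(2\al)},\,\sqrt{\mu/2}-\sqrt{1/(2\al)})^\top$, a state difference with equal blocks gives $(v_1+v_2)^2\|x_0^0-x^*\|^2=\tfrac{\mu}{2}\|x_0^0-x^*\|^2$, so $V_{P_{\al_1}}(\xi_1^1)=\tfrac{\mu}{2}\|x_0^0-x^*\|^2+f(x_0^0)-f^*\le 2(f(x_0^0)-f^*)$ by strong convexity --- this is exactly the computation in the paper's proof of Lemma~\ref{connector} (cf.\ \eqref{ineq_1_lem}) and is precisely where the factor $2$ in \eqref{k_stage_bound} comes from, so the base case still closes; there is no ``room to spare'' of the kind you describe.
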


We next define $N_K(p,n_1)$ as the number of iterations needed to run M-ASG for $K \geq 1$ stages, i.e., 
$N_K(p,n_1) \triangleq \sum_{k=1}^{K} n_k$.
Note for $K\geq 2$ and with parameters given in Theorem~\ref{main_result},
\begin{align}
\label{N_K}
N_K(p,n_1) = n_1 + (2^{K+1}-4) \ceil{\sqrt{\kappa }\log(2^{p+2})}.
\end{align}
We define $\{x_n\}_{n\in
{\mathbb{Z}}_+}$ sequence such that $x_n$ is the iterate generated by M-ASG algorithm at the end of $n$ gradient steps for $n\geq 0$, i.e., $x_0=x_0^0$, $x_n=x_{n+1}^1$ for $1\leq n\leq n_1$, and for $n>n_1$ we set $x_n=x^k_m$ where $k=\lceil\log_2\left(\frac{n-n_1}{\ceil{\sqrt{\kappa }\log(2^{p+2})}}+4\right)-1\rceil$ and $m=n-N_{k-1}(p,n_1)$.
\begin{rmk}
In the absence of noise, i.e., $\sigma = 0$, the result of 
Theorem~\ref{main_result} recovers the linear convergence rate of deterministic gradient methods as its special case. Indeed, running M-ASG only for one stage with $n$ iterations, i.e., $K=1$ and $n_1=n$ guarantees that $\E\left [f(x_n)\right] - f^* \leq 
2 \exp(-n/\sqrt{\kappa}) (f(x_0^0)-f^*)$ for all $n\geq 1$.
\end{rmk}
The next {theorem} remarks the behavior of 
M-ASG after running {it} for $n$ iterations with the parameters in the preceding {theorem}, and its proof is provided in Appendix \ref{proof_thm_result}.
\begin{thm}\label{thm_result}
Let \Sml. Consider running Algorithm \ref{Algorithm1} for $n$ iterations and with parameters given in Theorem \ref{main_result} and $n_1 < n$. Then the error is bounded by
\ali{
\begin{equation}\label{error_n_iterations}
\begin{aligned}
\E & \left [f(x_n)\right] - f^* \\
& \leq \bigO(1) \left (\frac{\left (8(p+1)\sqrt{\kappa }\log(2)\right )^{p+1}}{(n-n_1)^{p+1}} \left ( \exp(-n_1/\sqrt{\kappa}) (f(x_0^0)-f^*)\right ) +  \frac{(p+1) \sigma^2 }{(n-n_1) \mu} \right ).
\end{aligned}    
\end{equation}}
\end{thm}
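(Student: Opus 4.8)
The plan is to derive \eqref{error_n_iterations} from Theorem~\ref{main_result} by carefully tracking how the stage index $k$ and the within-stage iterate $m$ relate to the total iteration count $n$. First I would recall the definition of $\{x_n\}$ given just before the statement: for $n>n_1$ we have $x_n = x_m^k$ with $k = \lceil \log_2\left(\frac{n-n_1}{\lceil\sqrt{\kappa}\log(2^{p+2})\rceil}+4\right)-1\rceil$ and $m = n-N_{k-1}(p,n_1)$. The cleanest approach is to bound $\E[f(x_n)]-f^*$ by the last-iterate bound of stage $k-1$ (or stage $k$), i.e.\ to relate $x_n$ to $x_{n_{k-1}+1}^{k-1}$, or alternatively to run the single-stage bound of Theorem~\ref{non_assym_thm} within stage $k$ starting from $\xi_1^k$ and use Lemma~\ref{connector}. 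Since we want a bound valid for \emph{every} $n$ (not just stage endpoints), I would argue that for $n$ inside stage $k$, $\E[f(x_n)]-f^*$ is at most a constant times $\E[f(x_{n_{k-1}+1}^{k-1})]-f^*$ plus the variance floor $\frac{\sigma^2\sqrt\kappa}{L 2^{k-1}}$; monotonicity-type control of the Lyapunov function along a stage (from \eqref{Lyapunov_stage} with the exponential factor $\le 1$) gives this with an absorbed $\bigO(1)$ constant, so it suffices to plug $k-1$ into \eqref{k_stage_bound}.

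Next I would convert the stage-index bound \eqref{k_stage_bound} into a bound in terms of $n$. From \eqref{N_K}, $n - n_1 \le N_K - n_1 = (2^{K+1}-4)\lceil\sqrt\kappa\log(2^{p+2})\rceil$, and more to the point, if $n$ lies in stage $k$ then $n - n_1 \ge N_{k-1}-n_1 = (2^{k}-4)\lceil\sqrt\kappa\log(2^{p+2})\rceil$ for $k\ge 2$ (handling small $k$ separately). Hence $2^{k} \le \frac{n-n_1}{\lceil\sqrt\kappa\log(2^{p+2})\rceil}+4 \le \bigO(1)\frac{n-n_1}{\sqrt\kappa\log(2^{p+2})}$, which gives both $\frac{1}{2^{k-1}} \le \bigO(1)\frac{\sqrt\kappa\log(2^{p+2})}{n-n_1}$ for the variance term and, raising to the power $p+1$, $\frac{1}{2^{(p+1)(k-1)}} \le \bigO(1)\frac{(\sqrt\kappa\log(2^{p+2}))^{p+1}}{(n-n_1)^{p+1}}$ for the bias term. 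The only slightly delicate point is extracting the constant $8(p+1)\log 2$: one uses $\log(2^{p+2}) = (p+2)\log 2 \le 2(p+1)\log 2$ (since $p\ge 1$) and absorbs the ceiling and the ``$+4$'' and the stage-slack factors into the leading $\bigO(1)$; the factor $8 = 2\cdot 4$ accounts for the doubling between $N_{k-1}$ and $N_k$ and for the $(p+2)\le 2(p+1)$ bound.

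Finally I would assemble: the bias term of \eqref{k_stage_bound} is $\frac{2}{2^{(p+1)(k-1)}}\exp(-n_1/\sqrt\kappa)(f(x_0^0)-f^*)$, into which we substitute the bound on $2^{-(p+1)(k-1)}$ to get the first summand of \eqref{error_n_iterations}; and the variance term $\frac{\sigma^2\sqrt\kappa}{L 2^{k-1}}$ together with $\frac{1}{2^{k-1}}\le \bigO(1)\frac{\sqrt\kappa\log(2^{p+2})}{n-n_1}$ and $\sqrt\kappa/L = \frac{1}{\sqrt{\mu L}} \le \frac{\sqrt\kappa}{\mu}\cdot\frac{1}{\sqrt\kappa}$... more simply $\frac{\sqrt\kappa}{L}\cdot\frac{1}{2^{k-1}} \le \bigO(1)\frac{\kappa\log(2^{p+2})}{L(n-n_1)} = \bigO(1)\frac{(p+1)}{(n-n_1)}\cdot\frac{\kappa}{L} = \bigO(1)\frac{(p+1)}{\mu(n-n_1)}$ using $\kappa/L = 1/\mu$, which yields the second summand $\bigO(1)\frac{(p+1)\sigma^2}{(n-n_1)\mu}$. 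The main obstacle I anticipate is the bookkeeping for $n$ strictly inside a stage rather than at its endpoint — justifying that the intra-stage iterates do not blow up the bound beyond a universal constant, which requires invoking \eqref{Lyapunov_stage} (with exponential factor $\le 1$) to control $\E[V_{P_{\alpha_k}}(\xi_m^k)]$ uniformly in $m$ by $\E[V_{P_{\alpha_k}}(\xi_1^k)] + \frac{\sigma^2\sqrt\kappa c_k}{L}$ and noting $V_{P_\alpha}(\xi)\ge f(T\xi)-f^*$ since $P_\alpha\succeq 0$ — plus the separate treatment of the first few stages where the $(2^k-4)$ lower bound on $n-n_1$ is vacuous, which is absorbed into $\bigO(1)$.
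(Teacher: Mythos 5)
Your overall route is the same as the paper's: control the intra-stage iterate $x_m^k$ by applying Theorem~\ref{non_assym_thm} within stage $k$ with the exponential factor bounded by $1$, pass to the end of stage $k-1$ via Lemma~\ref{connector} (equivalently, via the inequality $\E[V_{P_{\alpha_k}}(\xi_1^k)]\le 2\,\E[f(x^{k-1}_{n_{k-1}+1})-f^*]$ from its proof), plug in the stage-indexed bound from Theorem~\ref{main_result}, and then convert the stage index into the iteration count using \eqref{N_K}. This matches the paper's derivation of its intermediate bound \eqref{n_bound} and the subsequent substitution almost step for step.

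There is, however, one concrete error in the conversion step. You write that $n$ lying in stage $k$ gives $n-n_1\ge N_{k-1}-n_1=(2^{k}-4)\ceil{\sqrt{\kappa}\log(2^{p+2})}$, hence $2^{k}\le \frac{n-n_1}{\ceil{\sqrt{\kappa}\log(2^{p+2})}}+4$, and from this you conclude $\frac{1}{2^{k-1}}\le \bigO(1)\frac{\sqrt{\kappa}\log(2^{p+2})}{n-n_1}$. That conclusion does not follow: an \emph{upper} bound on $2^{k}$ yields a \emph{lower} bound on $2^{-(k-1)}$, which is useless here. What you need is the other side of the sandwich, $n\le N_k$, which gives
\begin{equation*}
n-n_1\;\le\; N_k-n_1\;=\;(2^{k+1}-4)\ceil{\sqrt{\kappa}\log(2^{p+2})}\;\le\;2^{k+1}\ceil{\sqrt{\kappa}\log(2^{p+2})},
\end{equation*}
hence $2^{k-1}\ge \frac{n-n_1}{4\ceil{\sqrt{\kappa}\log(2^{p+2})}}$ and therefore $\frac{1}{2^{k-1}}\le \frac{4\ceil{\sqrt{\kappa}\log(2^{p+2})}}{n-n_1}$, which is the inequality your bias and variance estimates actually require (and it also makes the separate treatment of small $k$ unnecessary). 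This is exactly how the paper proceeds: it takes $K$ to be the largest index with $n\ge N_K$, derives $2^{K-1}=\frac{N_K-n_1+\Psi}{\Psi}$ with $\Psi=4\ceil{(p+2)\sqrt{\kappa}\log 2}$, and then uses $n-n_1\le N_{K+1}-n_1\le 2(N_K-n_1+\Psi)$ to trade $N_K$ for $n$. With that single direction fixed, the rest of your argument — including the extraction of the constant $8(p+1)\log 2$ from $(p+2)\le 2(p+1)$ and the doubling between consecutive $N_k$, and the identity $\sqrt{\kappa}/L\cdot\sqrt{\kappa}=1/\mu$ for the variance term — goes through as in the paper.
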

\begin{corr}\label{corr_result}
Under the 
premise of Theorem \ref{thm_result}, choosing \ali{$n_1 = \ceil{(p+1) \sqrt{\kappa}\log \left (12(p+1) \kappa \right )}$}, the suboptimality error of M-ASG after $n \geq 2 n_1$ admits 
\ali{
\begin{align*}
\E\left [f(x_n)\right] - f^* \leq~ \bigO(1)\left (\frac{1}{n^{p+1}}(f(x_0^0)-f^*) + \frac{(p+1) \sigma^2 }{n \mu} \right).
\end{align*}}
\end{corr}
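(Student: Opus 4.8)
The plan is to derive Corollary~\ref{corr_result} directly from Theorem~\ref{thm_result} by plugging in the prescribed $n_1 = \ceil{(p+1)\sqrt{\kappa}\log(12(p+1)\kappa)}$ and then controlling each of the two terms in \eqref{error_n_iterations} separately, using the hypothesis $n \geq 2n_1$. The first preliminary step is to verify the premise $n_1 < n$ of Theorem~\ref{thm_result}: since $n_1 \geq 1$ and $n \geq 2n_1$ we have $n > n_1$, so the theorem applies, and moreover $n - n_1 \geq n/2$, which is the one inequality that will be used repeatedly.

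For the variance term, $n - n_1 \geq n/2$ immediately gives $\frac{(p+1)\sigma^2}{(n-n_1)\mu} \leq \frac{2(p+1)\sigma^2}{n\mu}$, of the claimed order after the factor $2$ is absorbed into the $\bigO(1)$ constant. The substantive step is the bias prefactor $\frac{\left(8(p+1)\sqrt{\kappa}\log 2\right)^{p+1}}{(n-n_1)^{p+1}}\exp(-n_1/\sqrt{\kappa})$. Using $n - n_1 \geq n/2$ turns this into at most $\frac{\left(16(p+1)\sqrt{\kappa}\log 2\right)^{p+1}}{n^{p+1}}\exp(-n_1/\sqrt{\kappa})$. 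From the ceiling in the definition of $n_1$ one has $n_1/\sqrt{\kappa} \geq (p+1)\log(12(p+1)\kappa)$, hence $\exp(-n_1/\sqrt{\kappa}) \leq \left(12(p+1)\kappa\right)^{-(p+1)}$. Multiplying the two bounds, the $(p+1)$ factors and one power of $\sqrt{\kappa}$ cancel, leaving the prefactor bounded by $\frac{1}{n^{p+1}}\left(\tfrac{16\log 2}{12\sqrt{\kappa}}\right)^{p+1} = \frac{1}{n^{p+1}}\left(\tfrac{4\log 2}{3\sqrt{\kappa}}\right)^{p+1} \leq \frac{1}{n^{p+1}}$, where the last inequality uses $\kappa \geq 1$ and $\tfrac{4\log 2}{3} < 1$. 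Thus the bias term is at most $\frac{1}{n^{p+1}}(f(x_0^0)-f^*)$ up to the $\bigO(1)$ factor of Theorem~\ref{thm_result}, and combining with the variance bound yields the stated estimate.

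This argument is essentially bookkeeping, so I do not expect a genuine obstacle; the only point needing care is making the cancellation in the bias prefactor clean and uniform in $p \geq 1$ and $\kappa \geq 1$ — in particular checking that the constant $12$ inside the logarithm is chosen large enough to dominate $16\log 2 = 8\cdot 2\log 2$ so that the residual factor $\left(\tfrac{4\log 2}{3\sqrt{\kappa}}\right)^{p+1}$ is genuinely at most $1$, and then tracking that all absorbed constants ($2$ from the variance term, the residual $\leq 1$ from the bias term, and the $\bigO(1)$ from Theorem~\ref{thm_result}) remain independent of $\mu$, $L$, and $p$.
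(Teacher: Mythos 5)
Your proposal is correct and follows essentially the same route as the paper: plug the prescribed $n_1$ into \eqref{error_n_iterations}, use $\exp(-n_1/\sqrt{\kappa}) \leq (12(p+1)\kappa)^{-(p+1)}$ to cancel the $\left(\bigO(1)(p+1)\sqrt{\kappa}\log 2\right)^{p+1}$ prefactor (the paper phrases this as $\exp(-n_1/\sqrt{\kappa}) \leq (16\log(2)(p+1)\sqrt{\kappa})^{-(p+1)}$, which reduces to your check that $\tfrac{4\log 2}{3\sqrt{\kappa}} \leq 1$ for $\kappa \geq 1$), and finish with $n - n_1 \geq n/2$. The only difference is the order in which the two estimates are applied, which is immaterial.
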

\vspace{-3mm}
Theorem~\ref{thm_result} immediately yields the 
result in Corollary~\ref{corr_result}, (suboptimal with respect to dependence on initial optimality gap); see Appendix \ref{proof_corr_result} for the proof. Similar rate results have also been obtained by AC-SA \cite{ghadimi2012optimal} and ORDA \cite{NIPS2012_4543} algorithms.

We continue this section by pointing out 
some important special cases of our result. We first show in the next corollary how our algorithm is universally optimal and capable of achieving the lower bounds \eqref{Nemirovsky_lowebound} and \eqref{sigma_lowerbound} simultaneously. The proof 
follows from \eqref{error_n_iterations} and 
$n - n_1 \geq \frac{n}{2} \geq \sqrt{\kappa}$.
\begin{corr}\label{Universal_MASG}
Under the premise of {Theorem~\ref{thm_result}}, consider a computational budget of 
$n \geq 2\sqrt{\kappa}$ iterations.  
By setting $n_1 = \frac{n}{C}$ for some positive constant $C \geq 2$, we obtain a bound matching the lower bounds in \eqref{Nemirovsky_lowebound} and \eqref{sigma_lowerbound}, i.e.,
\begin{equation*}
\E\left [f(x_n)\right] - f^* \leq~\bigO(1) \left( \exp(-\tfrac{n}{C\sqrt{\kappa}})(f(x_0^0)-f^*) + \frac{\sigma^2 }{n \mu}  \right).
\end{equation*}
\end{corr}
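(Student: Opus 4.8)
The plan is to specialize the bound \eqref{error_n_iterations} of Theorem~\ref{thm_result} by fixing the free parameter $p$ (say $p=1$, so that the suppressed constants become absolute) and substituting $n_1 = n/C$, rounding up to the nearest integer if necessary. First I would record the two elementary inequalities that drive everything: since $C \ge 2$, we have $n - n_1 = n(1 - 1/C) \ge n/2$; and since the budget satisfies $n \ge 2\sqrt{\kappa}$, this gives $n - n_1 \ge n/2 \ge \sqrt{\kappa}$. Moreover $n_1 = n/C < n$, so the hypothesis $n_1 < n$ of Theorem~\ref{thm_result} holds (and $n_1 \ge 1$ once $n \ge C$, which may be assumed since $\kappa \ge 1$).

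Next I would bound the variance term of \eqref{error_n_iterations}: using $n - n_1 \ge n/2$,
\[
\frac{(p+1)\sigma^2}{(n - n_1)\mu} \le \frac{2(p+1)\sigma^2}{n\mu},
\]
and with $p$ fixed the constant $2(p+1)$ is absorbed into $\bigO(1)$, leaving the desired $\bigO(1)\,\sigma^2/(n\mu)$.

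For the bias term, the key observation is that the $\kappa$-dependent prefactor collapses to an absolute constant. Using $n - n_1 \ge \sqrt{\kappa}$,
\[
\frac{\big(8(p+1)\sqrt{\kappa}\log 2\big)^{p+1}}{(n-n_1)^{p+1}}
= \left(\frac{8(p+1)\sqrt{\kappa}\log 2}{n - n_1}\right)^{p+1}
\le \big(8(p+1)\log 2\big)^{p+1},
\]
which for fixed $p$ is a constant independent of $\mu$, $L$, $\sigma$, and $n$. Since $\exp(-n_1/\sqrt{\kappa}) = \exp(-n/(C\sqrt{\kappa}))$ by the choice of $n_1$, the bias term is at most $\bigO(1)\exp(-n/(C\sqrt{\kappa}))(f(x_0^0) - f^*)$. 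Adding the two estimates yields the claimed bound, and I would close by noting that the exponential $\exp(-n/(C\sqrt{\kappa}))$ matches the lower bound \eqref{Nemirovsky_lowebound} (with $C$ folded into the $\bigO(1)$ in its exponent) while $\sigma^2/(n\mu)$ matches \eqref{sigma_lowerbound}.

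I do not expect a genuine obstacle: this corollary is essentially a rearrangement of Theorem~\ref{thm_result}. The only step needing a moment's care is the cancellation of the $\kappa$-growth in the bias prefactor, which succeeds precisely because the budget is assumed to exceed $2\sqrt{\kappa}$ — were $n$ allowed to be smaller than $\sqrt{\kappa}$, the factor $(\sqrt{\kappa}/(n-n_1))^{p+1}$ would be uncontrolled. A secondary, purely cosmetic point is the integrality of $n_1 = n/C$, handled by a ceiling that affects constants only.
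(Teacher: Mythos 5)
Your proposal is correct and follows exactly the paper's argument: the paper also derives this corollary directly from \eqref{error_n_iterations} using the two inequalities $n - n_1 \geq n/2 \geq \sqrt{\kappa}$ to absorb the $\kappa$-dependent bias prefactor into the $\bigO(1)$ constant and to bound the variance term by $\bigO(1)\sigma^2/(n\mu)$. No gaps.
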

We note that achieving the lower bound through the M-ASG algorithm requires the knowledge or estimation of the strong convexity constant $\mu$. In some applications, $\mu$ may not be known a priori. However, for regularized risk minimization problems, the regularization parameter is known and it determines the strong convexity constant. It is also worth noting that, even for the deterministic case, \cite{pmlr-v48-arjevani16} has shown that for a wide class of algorithms including ASG, it is {\it not possible} to obtain the lower bound \eqref{Nemirovsky_lowebound} without knowing the strong convexity parameter. \ali{In addition, in Appendix \ref{Convex_Extension}\mg{,} we show how our framework can be \mg{extended} to obtain nearly optimal results \mg{in} the \sa{merely} convex setting; \mg{i.e. when $\mu=0$}.} Finally, note that the Lipschitz constant $L$ can be estimated from data using standard line search techniques in practice, see \cite{beck2009fast} and \cite[Alg. 2]{schmidt2015non}.

The lower bound can also be stated as the minimum number of iterations needed to find an $\epsilon-$solution, i.e, to find $x_\epsilon$ such that $\E[f(x_\epsilon)] - f^* \leq \epsilon$, for any given $\epsilon > 0$. In the following corollary, and with the additional assumption of knowing the bound $\Delta$ on the initial optimality gap $f(x_0^0) - f^*$, we state this version of lower bound. The proof is provided in Appendix \ref{proof_ep_lower_bound}.
\begin{corr}\label{ep_lower_bound}
Let \Sml. 
Given $\Delta\geq f(x_0^0) - f^*$, for any $\epsilon \in (0,\Delta)$, running M-ASG, 
Algorithm \ref{Algorithm1}, with parameters given in Theorem \ref{main_result}, $p=1$, and $n_1 = \ceil{\sqrt{\kappa}\log \left ( \frac{4\Delta}{\epsilon}\right )}$, one can compute $\epsilon-$solution within $n_\epsilon$ iterations in total, where
\begin{equation}\label{iterations_epsilon}
n_\epsilon = \ceil{\sqrt{\kappa}\log \left ( \frac{4\Delta}{\epsilon}\right )} + \ceil{16(1+\log(8)) \frac{\sigma^2}{\mu \epsilon}}.
\end{equation}
\end{corr}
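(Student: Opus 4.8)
The plan is to specialize Theorem~\ref{main_result} to $p=1$ and to the prescribed first-stage length $n_1 = \lceil\sqrt{\kappa}\log(4\Delta/\epsilon)\rceil$, then to run the algorithm just long enough -- for a suitable number of stages $K$ -- that the last iterate of stage $K$ becomes an $\epsilon$-solution, and finally to count the total number of gradient steps $N_K = \sum_{k=1}^{K} n_k$ consumed up to that point.

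First I would bound the bias part of \eqref{k_stage_bound}. For every $k\ge 1$ the coefficient $2/2^{2(k-1)}$ is at most $2$, and since $f(x_0^0)-f^*\le\Delta$, the bias term in \eqref{k_stage_bound} is at most $2\Delta\exp(-n_1/\sqrt{\kappa})$. The choice $n_1\ge\sqrt{\kappa}\log(4\Delta/\epsilon)$ gives $\exp(-n_1/\sqrt{\kappa})\le\epsilon/(4\Delta)$, so the bias term is $\le\epsilon/2$ for \emph{all} stages $k\ge1$. (Since $\epsilon<\Delta$ we have $n_1\ge1$, matching the hypothesis of Theorem~\ref{main_result}.) It then remains to drive the variance term $\sigma^2\sqrt{\kappa}/(L\,2^{k-1})$ below $\epsilon/2$. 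I would take $K$ to be the smallest positive integer with $2^{K-1}\ge 2\sigma^2\sqrt{\kappa}/(L\epsilon)$; then $\E[f(x_{n_K+1}^K)]-f^*\le\epsilon/2+\epsilon/2=\epsilon$, and by minimality $2^{K-1}<4\sigma^2\sqrt{\kappa}/(L\epsilon)$ whenever $K\ge2$ (while $K=1$ occurs only when $2\sigma^2\sqrt{\kappa}/(L\epsilon)\le1$).

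Finally I would count iterations. The iterate $x_{n_K+1}^K$ is produced after $N_K=\sum_{k=1}^{K} n_k$ gradient steps; by \eqref{N_K} with $p=1$ (so $\log(2^{p+2})=\log 8$) this equals $n_1+\lceil\sqrt{\kappa}\log 8\rceil(2^{K+1}-4)$ for $K\ge2$ and just $n_1$ for $K=1$. Using $2^{K+1}-4=4(2^{K-1}-1)\le 4\cdot2^{K-1}\le 16\sigma^2\sqrt{\kappa}/(L\epsilon)$, the bound $\lceil\sqrt{\kappa}\log 8\rceil\le\sqrt{\kappa}\log 8+1$, and the identities $\kappa/L=1/\mu$ and $\sqrt{\kappa}/L=1/\sqrt{\mu L}\le 1/\mu$ (valid since $L\ge\mu$), the second summand is at most $16\log 8\,\sigma^2/(\mu\epsilon)+16\sigma^2/(\mu\epsilon)=16(1+\log 8)\sigma^2/(\mu\epsilon)$; the case $K=1$ only makes $N_K$ smaller. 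As $N_K$ is an integer and $n_1=\lceil\sqrt{\kappa}\log(4\Delta/\epsilon)\rceil$, this yields $N_K\le \lceil\sqrt{\kappa}\log(4\Delta/\epsilon)\rceil+\lceil 16(1+\log 8)\sigma^2/(\mu\epsilon)\rceil=n_\epsilon$, so the $\epsilon$-solution $x_{n_K+1}^K$ is obtained within $n_\epsilon$ total iterations, which is exactly \eqref{iterations_epsilon}.

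There is no conceptual difficulty here beyond Theorem~\ref{main_result}; the only delicate point is the constant bookkeeping in the last step -- collapsing the geometric sum $\sum_k 2^k$ together with the two ceiling operations so that the constants land exactly on $16(1+\log 8)$ -- and remembering to treat the degenerate case $2\sigma^2\sqrt{\kappa}/(L\epsilon)\le 1$, in which a single stage already suffices.
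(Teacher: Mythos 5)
Your proposal is correct and follows essentially the same route as the paper's proof: bound the bias term by $\epsilon/2$ via the choice of $n_1$, choose the number of stages $K$ so that the variance term $\sigma^2\sqrt{\kappa}/(L2^{K-1})$ also drops below $\epsilon/2$, and then bound $N_K$ using \eqref{N_K} together with $\kappa/L=1/\mu$. The only cosmetic difference is that you take the minimal such $K$ (giving $2^{K-1}<4\sigma^2\sqrt{\kappa}/(L\epsilon)$) while the paper fixes $K=\ceil{\log_2(\sigma^2\sqrt{\kappa}/(L\epsilon))}+2$, and you absorb the ceiling $\ceil{\sqrt{\kappa}\log 8}$ via $\sqrt{\kappa}\log 8+1$ rather than via $(1+1/\log 8)\sqrt{\kappa}\log 8$; both land on the same constant $16(1+\log 8)$.
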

Recall that we presented a comparison 
with other state-of-the-art algorithms in Table \ref{table-1}. In particular, this table shows that Multistage AC-SA \cite{ghadimi2013optimal} and Multistage ORDA \cite{NIPS2012_4543} also achieve the lower bounds provided that noise parameters are known -- {\it note we do not make this 
extra assumption for M-ASG.}
It is also worth noting that the idea of restart, which plays a key role in achieving the lower bounds, has been studied before in the context of deterministic accelerated methods \cite{candes-restart-acc-grad, Zhu2017LinearCA}. However, a naive extension of these restart methods to the stochastic setting leads to a two-stage algorithm which switches from constant step-size to diminishing step-size when the variance term dominates the bias term.
Nevertheless, implementing this technique requires the knowledge of $\sigma^2$ and optimality gap to tune algorithms for achieving optimal rates in both bias and variance terms. M-ASG, on the other hand, achieves the optimal rates using a specific multistage scheme that does not require the knowledge of the parameter $\sigma^2$.
In the supplementary material, we also discuss how M-ASG is related to AC-SA and Multistage AC-SA algorithms proposed in \cite{ghadimi2012optimal, ghadimi2013optimal}.
\section{{M-ASG$^{*}$:} An improved bias-variance trade-off }\label{MASG2}
In section \ref{MASG}, we described a universal algorithm that do not require the knowledge of neither initial suboptimality gap $\Delta$ nor the noise magnitude $\sigma^2$ to operate. However, as we will argue in this section, our framework is flexible in the sense that additional information about the magnitude of $\Delta$ or $\sigma^2$ can be leveraged to improve practical performance. We first note that several algorithms in the literature assume that an upper bound on $\Delta$ is known or can be estimated, as summarized in Table \ref{table-1}. This assumption is reasonable in a variety of applications when there is a natural lower bound on $f$. For example, in supervised learning scenarios such as support vector machines, regression or logistic regression problems, the loss function $f$ has non-negative values \cite{vapnik2013nature}. Similarly, the noise level $\sigma^2$ may be known or estimated, e.g., in private risk minimization \cite{bassily2014private}, the noise is added by the user to ensure privacy; therefore, it is a known quantity.

There is a natural well-known trade-off between constant and decaying stepsizes (decaying with the number of iterations $n$) in stochastic gradient algorithms. Since the noise is multiplied with the stepsize, a stepsize that is decaying with the number of iterations $n$ leads to a decay in the variance term; however, this will slow down the decay of the bias term, which is controlled essentially by the behavior of the underlying {\it deterministic accelerated gradient} algorithm (AG) that will give the best performance with the constant stepsize (note that when $\sigma=0$, the bias term gives the known performance bounds for the AG algorithm). The main idea behind the M-ASG algorithm (which allows it to achieve the lower bounds) is to exploit this trade-off to decide on the \emph{right time}, $n_1$, to \emph{switch} to decaying stepsizes, i.e., when the bias term is sufficiently small so that the variance term dominates and should be handled with the decaying stepsize.
This insight is visible from the results of Theorem \ref{main_result} which gives further insights on the choice of the stepsize at every stage to achieve the lower bounds. 
Theorem \ref{main_result} shows that if M-ASG is run with a constant stepsize $\al_1 = \frac{1}{L}$ in the first stage, then the variance term admits the bound $\frac{\sigma^2 \sqrt{\kappa}}{L}$ which does not decay with the number of iterations $n_1$ in the first stage. However, in later stages, when $n>n_1$, the stepsize $\alpha_k$ is decreased as the number of iterations grows and this results in a decay of the variance term. Overall,  the choice of the length of the first stage $n_1$, has a major impact in practice which we will highlight 
in our numerical experiments. 

If an estimate of $\Delta$ or $\sigma^2$ is known, it is desirable to choose $n_1$ as small as possible such that it ensures the bias term becomes smaller than the variance term at the end of the first stage. More specifically, applying Theorem \ref{non_assym_thm} for $c=1$, one can choose $n_1$ to  
balance the variance 
$\frac{\sigma^2 \sqrt{\kappa}}{L}$ and the bias $\exp(-n_1\frac{1}{\sqrt{\kappa}}) \E\left [V_{P_{\al_1}}(\xi_{1}^1) \right]$ terms. 
The term $\E\left [V_{P_{\al_1}}(\xi_{1}^1) \right]$, as shown in the proof of Lemma \ref{connector}, can be bounded by
$\E\left [V_{P_{\al_1}}(\xi_{1}^1)\right] = \mu \|x_0^0 - x^*\|_2^2 + f(x_0^0) - f^*\leq 2(f(x_0^0) - f^*)$.    
Therefore, by having an estimate of an upper bound for $\Delta$,
$n_1$ can be set to be the smallest number such that 
$2 \Delta \exp(-n_1\frac{1}{\sqrt{\kappa}}) \leq \frac{\sigma^2 \sqrt{\kappa}}{L}$, i.e.,

{\small
\begin{equation}
\label{eq:n1}
n_1= \ceil{\sqrt{\kappa}\log\Big(\frac{2L\Delta}{\sigma^2\sqrt{\kappa}}\Big)}.
\end{equation}}%
\begin{figure*}[t]
  \centering
    \begin{subfigure}{0.32\textwidth}
    \begin{center}
      \centerline{\includegraphics[width=1.12\columnwidth]{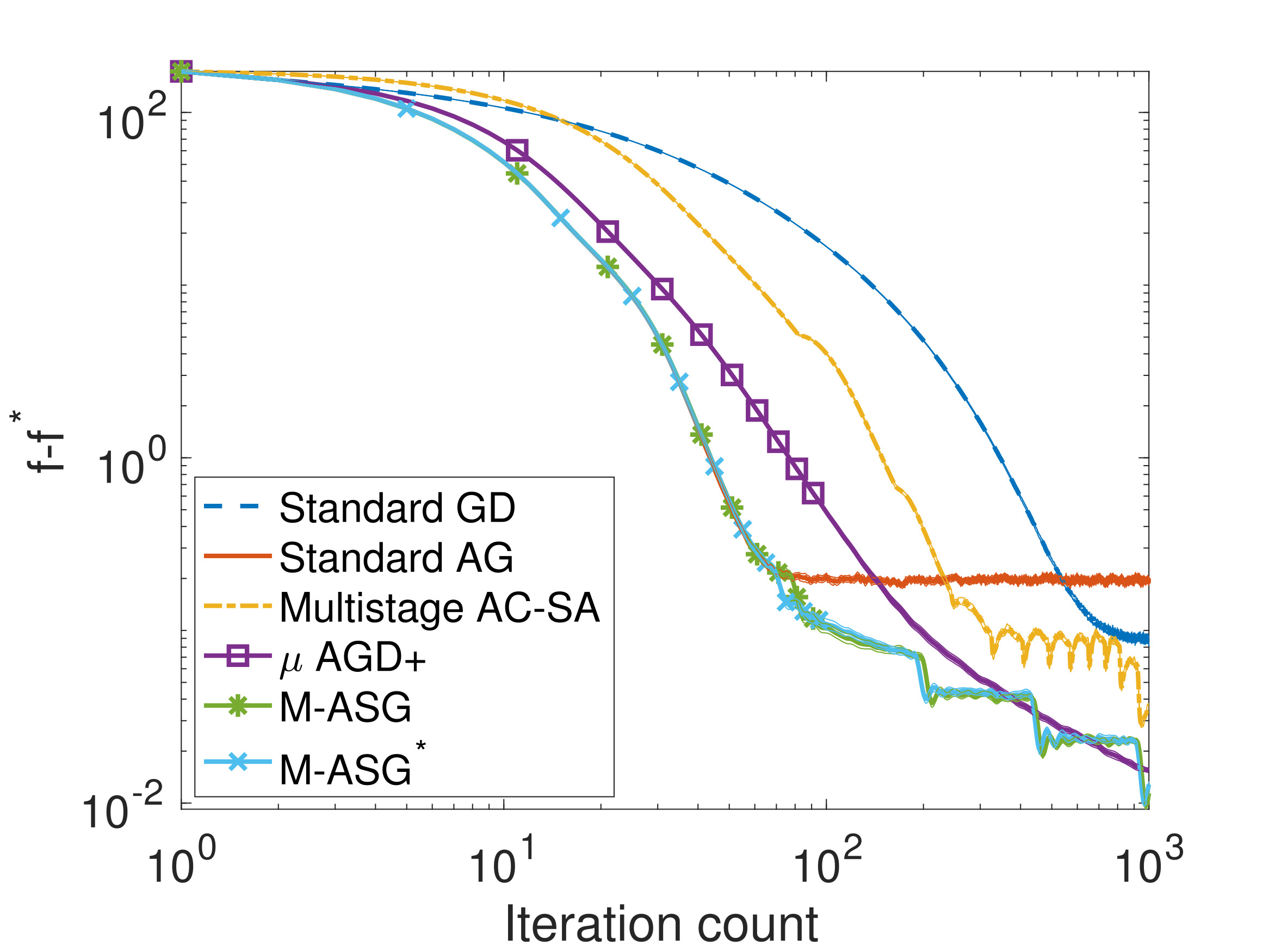}}
            \caption{$\sigma_n^2 = 10^{-2}$ }
      \label{fig1000_1}
    \end{center}
      \end{subfigure}
        \begin{subfigure}{0.32\textwidth}
    \begin{center}
      \centerline{\includegraphics[width=1.12\columnwidth]{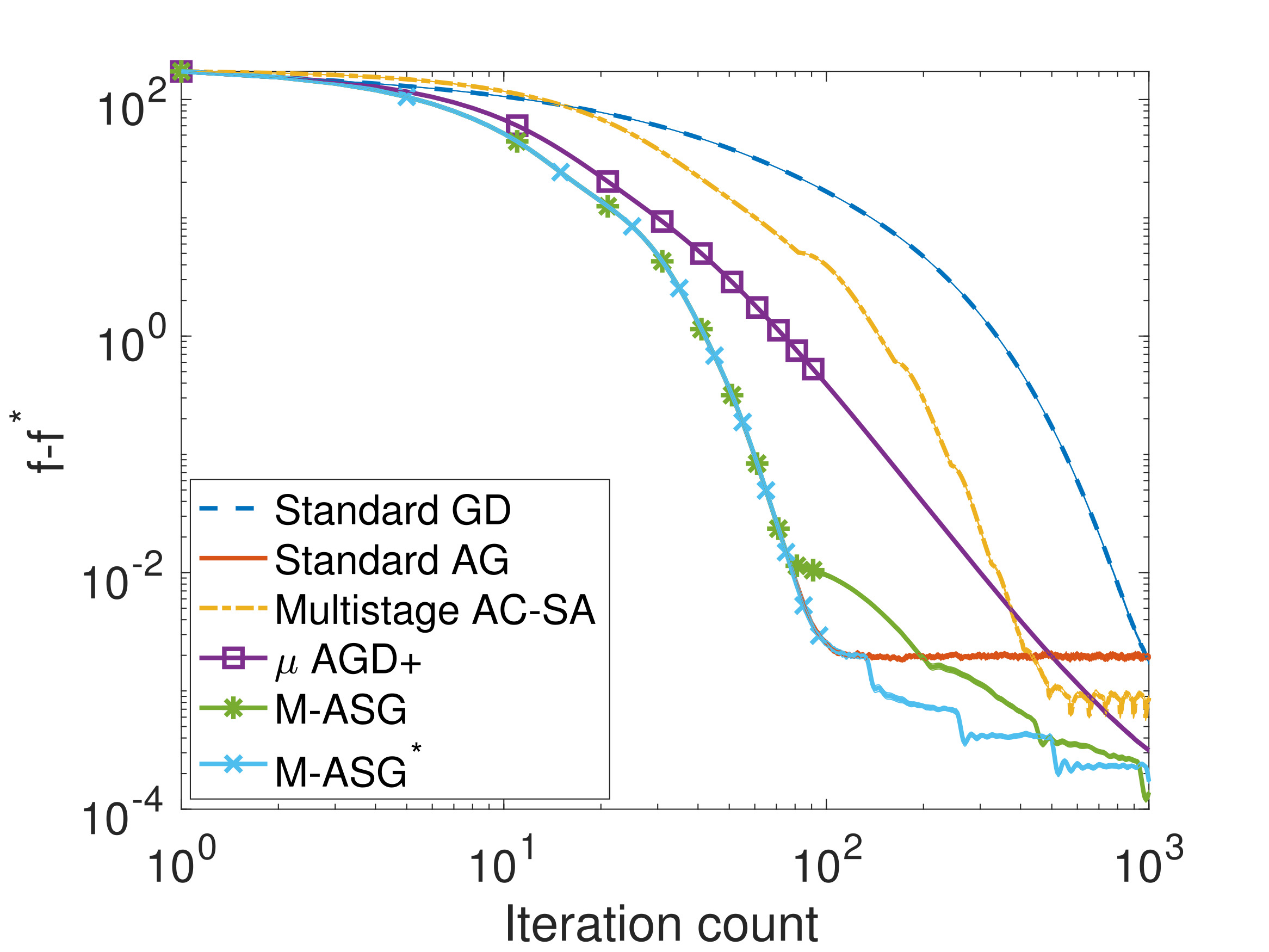}}
            \caption{$\sigma_n^2 = 10^{-4}$}
      \label{fig1000_2}
    \end{center}
  \end{subfigure}
    \begin{subfigure}{0.32\textwidth}
    \begin{center}
      \centerline{\includegraphics[width=1.12\columnwidth]{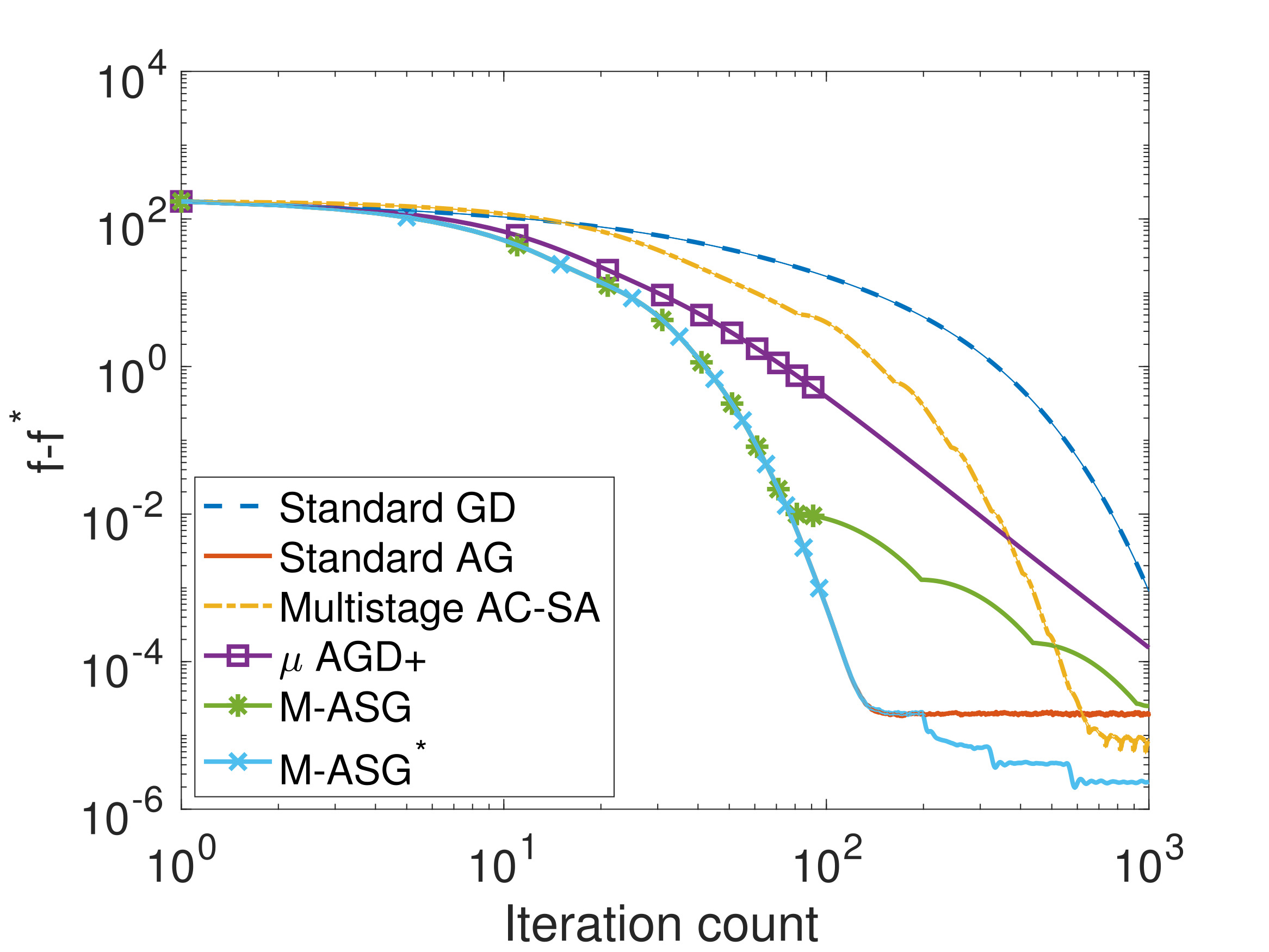}}
      \caption{$\sigma_n^2 = 10^{-6}$}
      \label{fig1000_3}
    \end{center}
  \end{subfigure}
    \caption{Comparison 
    on a quadratic function for $n=1000$ iterations with different level of noise. \label{fig1000}}
 \end{figure*}
\begin{figure*}[t]
  \centering
    \begin{subfigure}{0.32\textwidth}
    \begin{center}
      \centerline{\includegraphics[width=1.12\columnwidth]{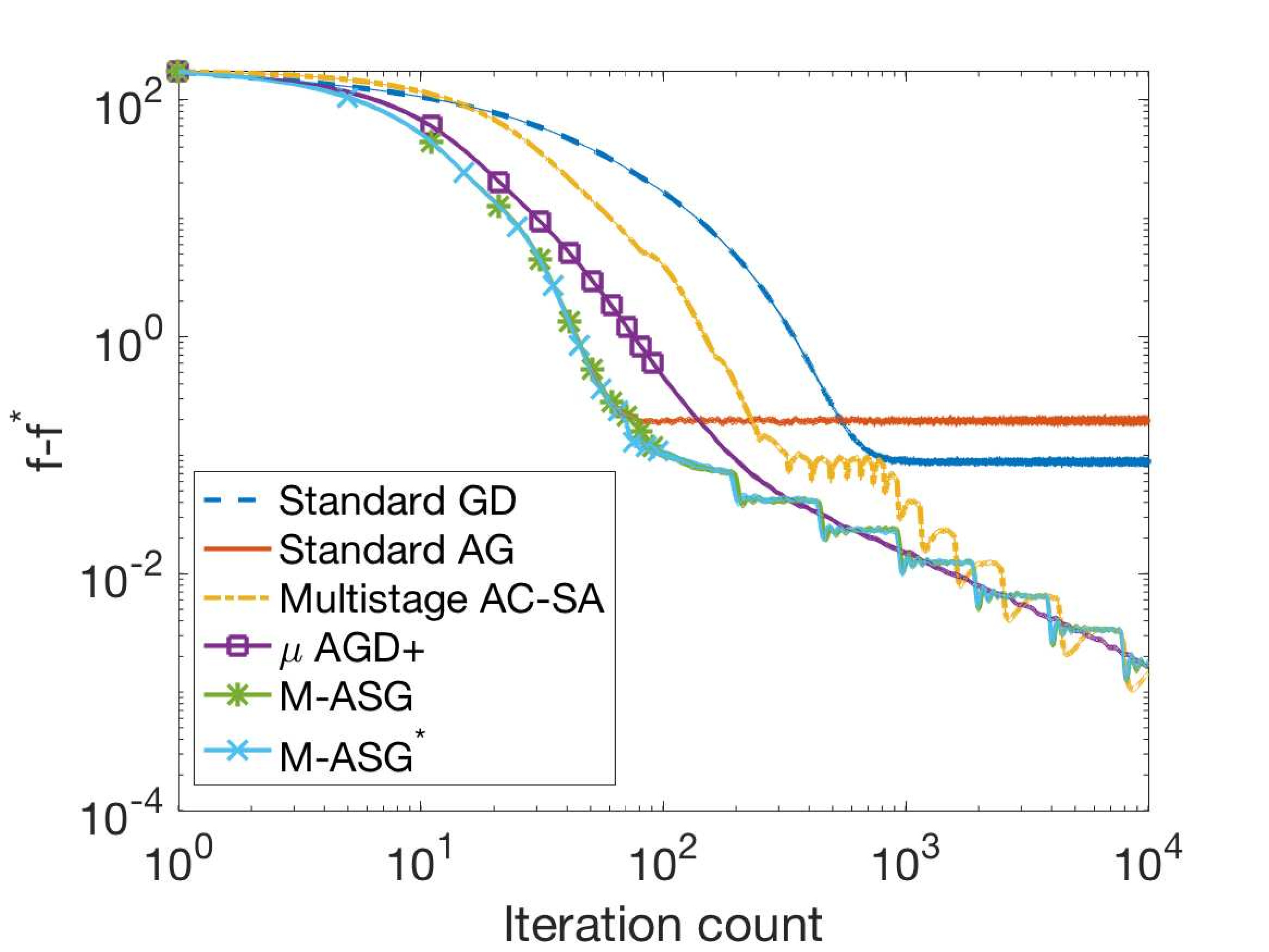}}
            \caption{$\sigma_n^2 = 10^{-2}$ }
      \label{fig10000_1}
    \end{center}
      \end{subfigure}
        \begin{subfigure}{0.32\textwidth}
    \begin{center}
      \centerline{\includegraphics[width=1.12\columnwidth]{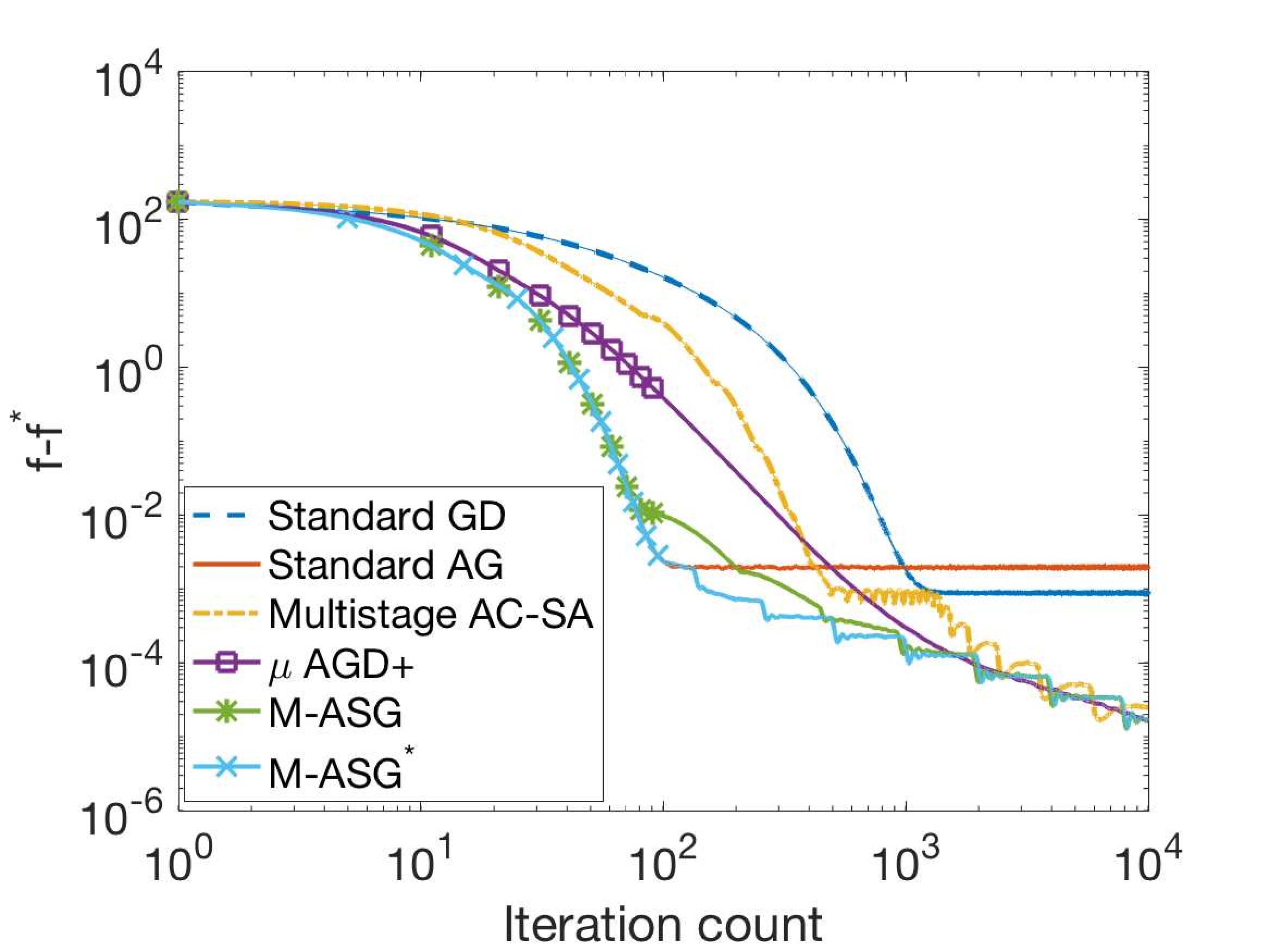}}
            \caption{$\sigma_n^2 = 10^{-4}$}
      \label{fig10000_2}
    \end{center}
  \end{subfigure}
    \begin{subfigure}{0.32\textwidth}
    \begin{center}
      \centerline{\includegraphics[width=1.12\columnwidth]{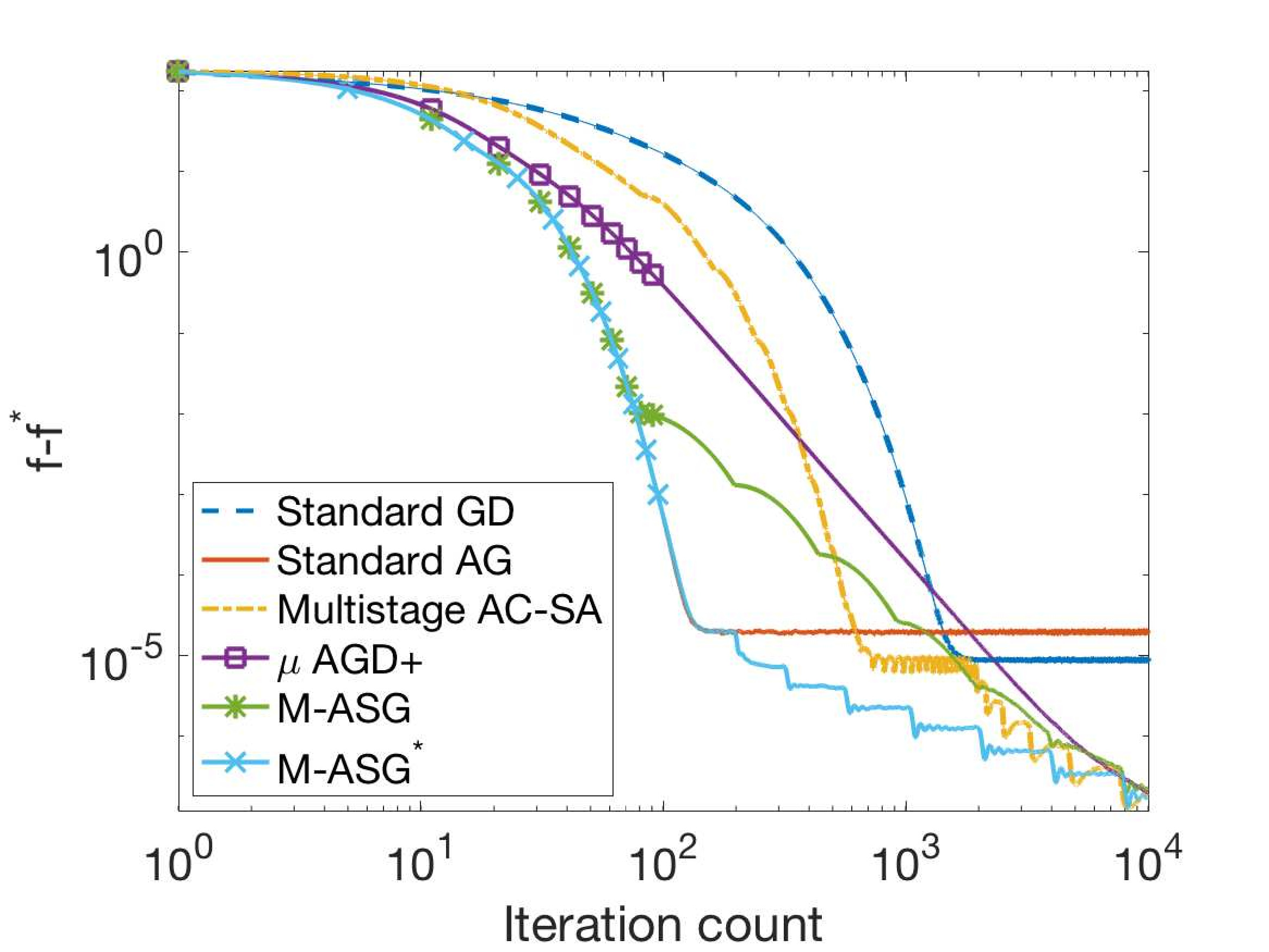}}
      \caption{$\sigma_n^2 = 10^{-6}$}
      \label{fig10000_3}
    \end{center}
  \end{subfigure}
    \caption{Comparison 
    on a quadratic function for $n=10000$ iterations with different level of noise. \label{fig10000}}
 \end{figure*}
This 
result allows one to fine-tune the switching point to start using the decaying stepsizes within our framework as a function of $\sigma^2$ and $\Delta$. In scenarios, when the noise level $\sigma$
is small or the initial gap $\Delta$ is large, $n_1$ is chosen large enough to guarantee a fast decay in the bias term. We would like to emphasize that 
this modified M-ASG algorithm only requires the knowledge of $\sigma$ and $\Delta$ 
for selecting $n_1$ and the rest of the parameters can be chosen as in Theorem \ref{main_result} which are independent of both $\sigma$ and $\Delta$.
Finally, the following theorem provides theoretical guarantees 
of our framework for this choice of $n_1$.
The proof is omitted as it is similar to the proofs of {Theorems \ref{main_result} and \ref{thm_result}}.
\begin{thm}\label{Practical_MASG}
Let \Sml. Consider running Algorithm \ref{Algorithm1} for $n$ iterations and with parameters given in Theorem \ref{main_result}, $p=1$, and $n_1$ set as \eqref{eq:n1}.
Then, the expected suboptimality in function values admits the bound
$\E\left [f(x_n)\right] - f^* \leq 36 (1+\log(8)) \frac{\sigma^2 }{(n-n_1) \mu}$ for all $n\geq n_1$.    
\end{thm}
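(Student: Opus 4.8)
The plan is to mirror the structure of the proofs of Theorems \ref{main_result} and \ref{thm_result}, but with the first-stage length $n_1$ chosen as in \eqref{eq:n1} rather than left free, and then to observe that this choice forces the bias contribution to be dominated by the variance contribution at every stage, so the final bound is purely a variance bound. Concretely, I would first apply Theorem \ref{non_assym_thm} with $c=1$ to the first stage to get
\begin{equation*}
\E\left[V_{P_{\al_1}}(\xi_{n_1+1}^1)\right] \leq \exp(-n_1/\sqrt{\kappa})\, \E\left[V_{P_{\al_1}}(\xi_1^1)\right] + \frac{\sigma^2\sqrt{\kappa}}{L},
\end{equation*}
then use the bound $\E[V_{P_{\al_1}}(\xi_1^1)] = \mu\|x_0^0-x^*\|^2 + f(x_0^0)-f^* \leq 2(f(x_0^0)-f^*) \leq 2\Delta$ from the proof of Lemma \ref{connector}, and invoke the defining property of $n_1$ in \eqref{eq:n1}, namely $2\Delta \exp(-n_1/\sqrt{\kappa}) \leq \frac{\sigma^2\sqrt{\kappa}}{L}$, to conclude $\E[V_{P_{\al_1}}(\xi_{n_1+1}^1)] \leq \frac{2\sigma^2\sqrt{\kappa}}{L}$.

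Next I would push this through the later stages exactly as in the proof of Theorem \ref{main_result}. For stages $k\geq 2$, the parameters are $n_k = 2^k\lceil\sqrt{\kappa}\log(2^{p+2})\rceil$ and $\al_k = 1/(2^{2k}L)$, i.e. $c_k = 2^{-k}$, so Theorem \ref{non_assym_thm} gives $\E[V_{P_{\al_k}}(\xi_{n_k+1}^k)] \leq \exp(-n_k c_k/\sqrt{\kappa})\E[V_{P_{\al_k}}(\xi_1^k)] + \frac{\sigma^2\sqrt{\kappa}c_k}{L}$, and Lemma \ref{connector} supplies $\E[V_{P_{\al_{k+1}}}(\xi_1^{k+1})] \leq 2\E[V_{P_{\al_k}}(\xi_{n_k+1}^k)]$. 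With $p=1$, $n_k c_k/\sqrt{\kappa} \geq \log(2^{p+2}) = \log 8$, so $2\exp(-n_k c_k/\sqrt{\kappa}) \leq \tfrac{1}{4}$; an induction shows that the "incoming" Lyapunov value $\E[V_{P_{\al_k}}(\xi_1^k)]$ stays bounded by a geometric series in the variance terms, giving something like $\E[V_{P_{\al_k}}(\xi_{n_k+1}^k)] \leq \bigO(1)\frac{\sigma^2\sqrt{\kappa}}{L\,2^k}$ — the key point being that the bias term never re-emerges because it was already swamped at the end of stage $1$. Since $f(x_{n_k+1}^k)-f^* \leq \E[V_{P_{\al_k}}(\xi_{n_k+1}^k)]$, this controls the last iterate of each stage.

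Finally, to get the bound at a general iteration count $n$ with $n_1 < n$, I would convert the per-stage bound into a per-iteration bound exactly as in the proof of Theorem \ref{thm_result}: locate the stage $k$ containing the $n$-th gradient step, use monotonicity-type estimates (the Lyapunov value at an intermediate iterate of a stage is controlled by the value at the start of the stage plus the accumulated variance), and relate $2^k$ to $n-n_1$ via $N_K(p,n_1) = n_1 + (2^{K+1}-4)\lceil\sqrt{\kappa}\log(2^{p+2})\rceil$, which yields $2^k \asymp \frac{n-n_1}{\sqrt{\kappa}\log 8}$. Plugging this in turns $\frac{\sigma^2\sqrt{\kappa}}{L\,2^k}$ into $\bigO(1)\frac{\sigma^2 \log 8}{(n-n_1)\mu}$, and tracking the absolute constants (the factor $2$ from Lemma \ref{connector} compounding against the $\tfrac14$ contraction, plus the geometric-series sum) gives the stated constant $36(1+\log 8)$.

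The main obstacle is bookkeeping the constants cleanly: one has to verify that the geometric series $\sum_k 2 \cdot (\text{contraction})^{?}$ arising from alternately applying Lemma \ref{connector} (factor $2$) and Theorem \ref{non_assym_thm} (factor $2\exp(-n_kc_k/\sqrt{\kappa}) \leq \tfrac14$ when $p=1$) actually converges and sums to the claimed numerical constant, and that the conversion from "last iterate of stage $k$" to "iterate at step $n$" in the style of Theorem \ref{thm_result} does not lose more than a constant factor. Everything else is a direct specialization of arguments already carried out for Theorems \ref{main_result} and \ref{thm_result}, which is precisely why the paper says "the proof is omitted as it is similar."
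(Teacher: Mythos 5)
Your proposal is correct and is exactly the route the paper intends: the paper omits this proof, saying only that it mirrors Theorems \ref{main_result} and \ref{thm_result}, and your argument --- using \eqref{eq:n1} to force $2\Delta\exp(-n_1/\sqrt{\kappa}) \leq \sigma^2\sqrt{\kappa}/L$ so that the bias is absorbed into the variance by the end of stage one, then running the induction of Theorem \ref{main_result} and the stage-to-iteration conversion of Theorem \ref{thm_result} --- is precisely that specialization. The constant bookkeeping also checks out: the induction gives $\E[V_{P_{\al_k}}(\xi_{n_k+1}^k)] \leq 2\sigma^2\sqrt{\kappa}/(2^{k-1}L)$, the intermediate-iterate step contributes a factor $17/4$, and bounding $\Psi \leq 4\sqrt{\kappa}(1+\log 8)$ together with $n-n_1 \leq 2(N_K-n_1+\Psi)$ yields $34(1+\log 8)\,\sigma^2/((n-n_1)\mu)$, which is within the stated $36(1+\log 8)$ constant.
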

\section{Numerical experiments}\label{numerical_experiments}
\begin{figure*}[t]
  \centering
    \begin{subfigure}{0.32\textwidth}
    \begin{center}
      \centerline{\includegraphics[width=1.12\columnwidth]{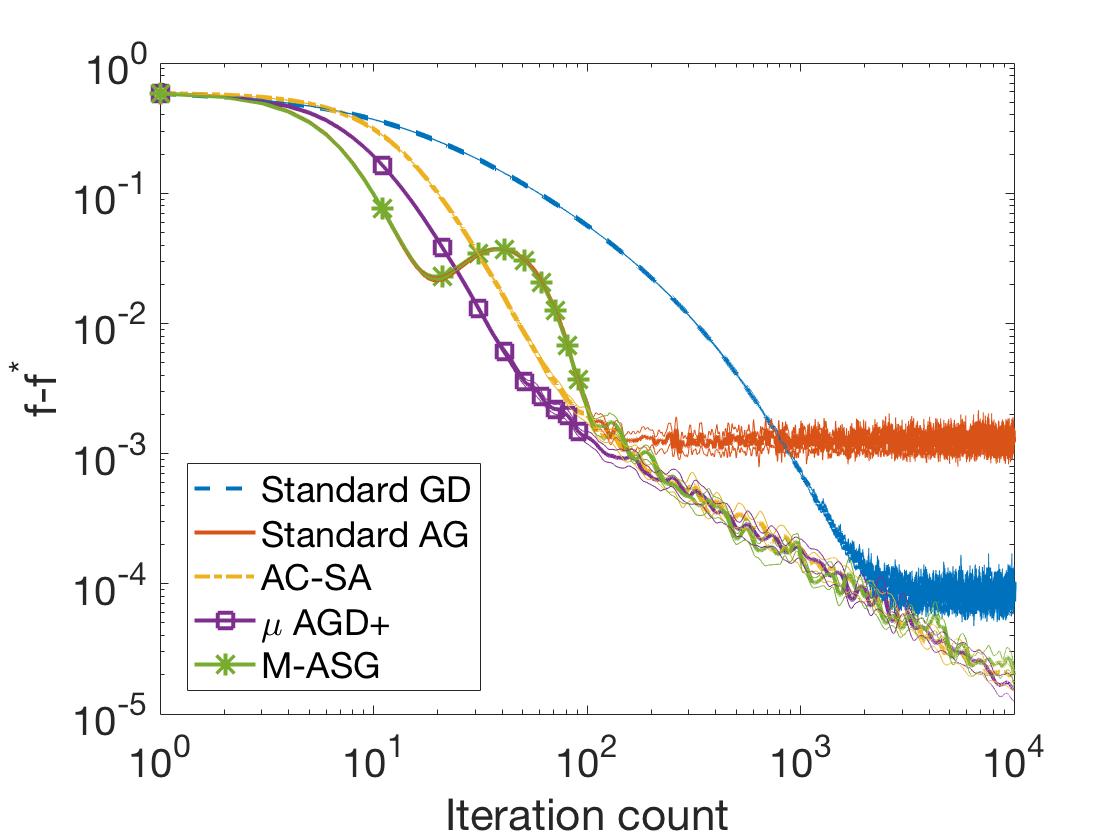}}
            \caption{$b=50$ }
      \label{fig_batch_50}
    \end{center}
      \end{subfigure}
        \begin{subfigure}{0.32\textwidth}
    \begin{center}
      \centerline{\includegraphics[width=1.12\columnwidth]{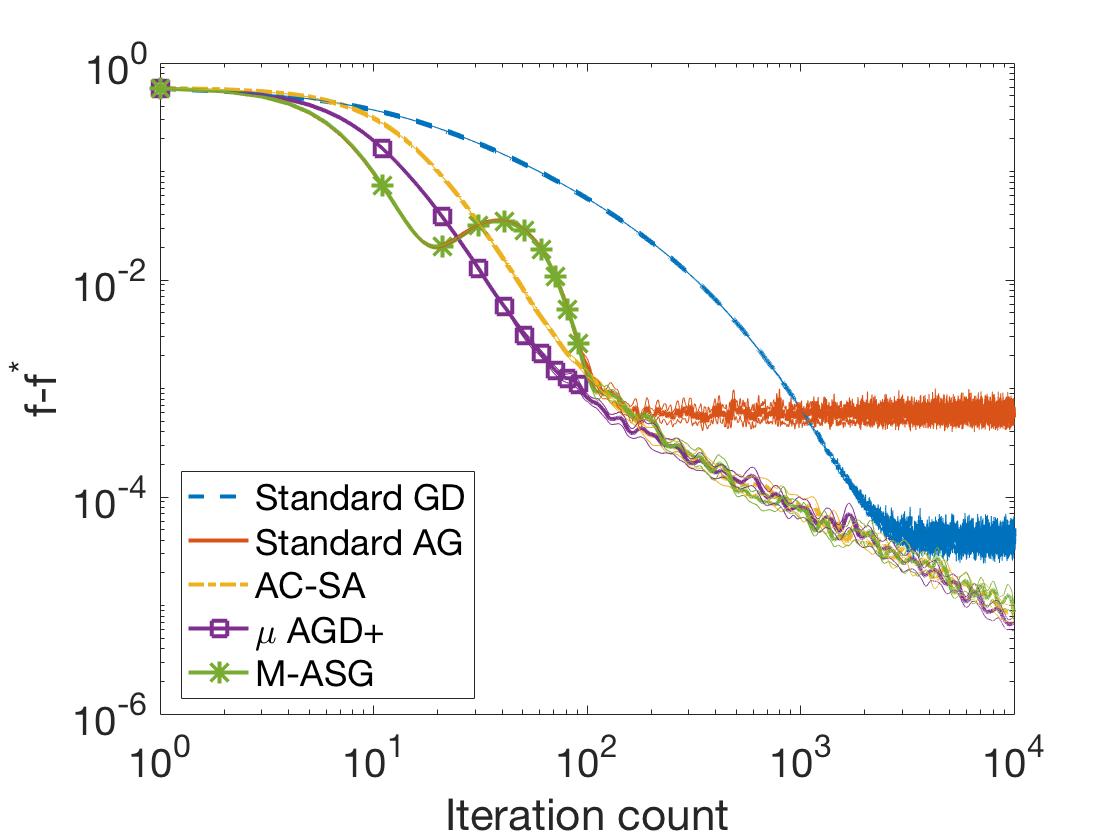}}
            \caption{$b=100$}
      \label{fig_batch_100}
    \end{center}
  \end{subfigure}
    \begin{subfigure}{0.32\textwidth}
    \begin{center}
      \centerline{\includegraphics[width=1.12\columnwidth]{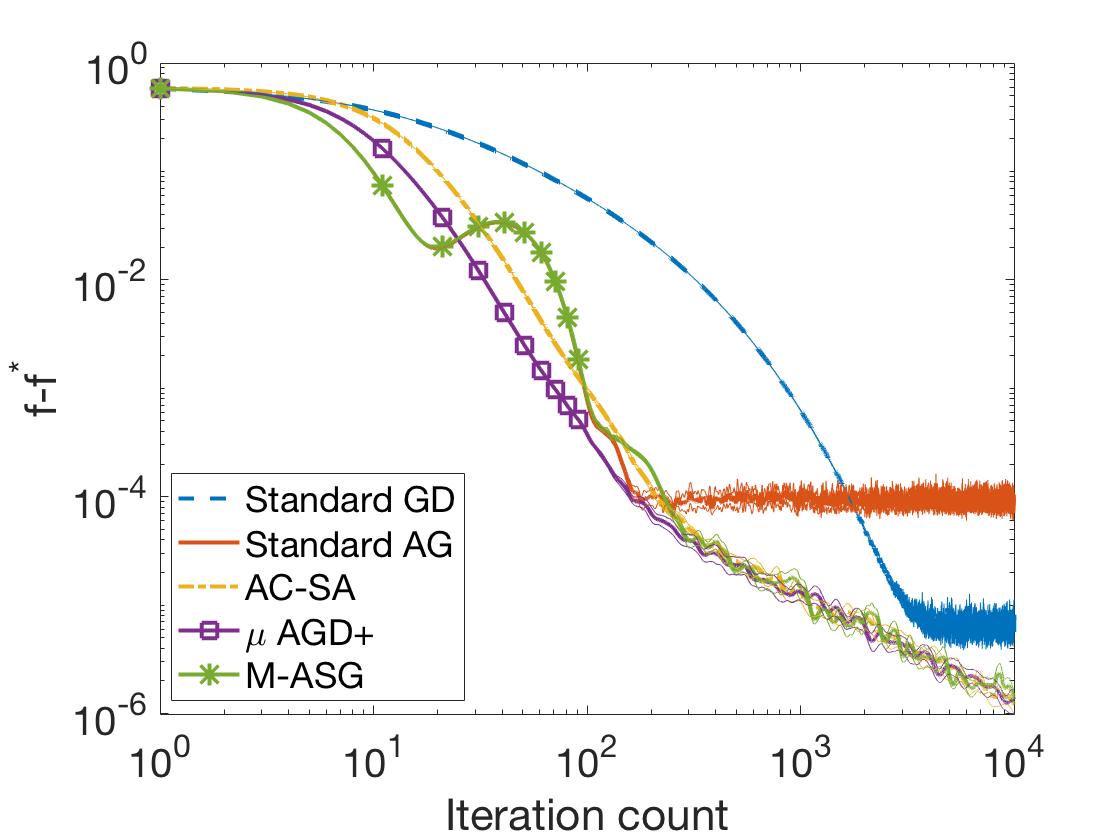}}
      \caption{$b=500$}
      \label{fig_batch_500}
    \end{center}
  \end{subfigure}
    \caption{Comparison 
    on logistic regression 
    with $n=10000$ iterations and with different batch sizes. \label{fig_batch}}
 \end{figure*}
In this section, we demonstrate the numerical performance of Algorithm \ref{Algorithm1} {with parameters specified by Corollary \ref{corr_result} (M-ASG) and Theorem \ref{Practical_MASG} (M-ASG$^*$)} and compare with other methods from the literature. In our first experiment, we consider the strongly convex quadratic objective $f(x) = \frac{1}{2} x^\top Q x - b x + \lambda \| x\|^2$ where
$Q$ is the Laplacian of a cycle graph\footnote{All diagonal entries of $Q$ are 2, $Q_{i,j}=-1$ if $|i-j| \equiv 1 \pmod d$, and the remaining entries are zero.}, $b$ is a random vector and $\lambda =0.01$ is a regularization parameter. We assume the gradients $\nabla f(x)$ are corrupted by additive noise with a Gaussian distribution $\mathcal{N}(0,\sigma_n^2)$ where $\sigma_n^2 \in \{10^{-6},10^{-4},10^{-2} \}$. We note that this example has been previously considered in the literature as a problem instance where \emph{Standard ASG} (ASG iterations with standard choice of parameters $\al = \frac{1}{L}$ and $\beta = \frac{\sqrt{\kappa}-1}{\sqrt{\kappa}+1}$) perform badly compared to \emph{Standard GD} (Gradient Descent with standard choice of the stepsize $\alpha=1/L$)  \cite{Hardt-blog}.
In Figures \ref{fig1000} and \ref{fig10000}, we compare M-ASG and M-ASG$^*$ with Standard GD, Standard AG, $\mu$AGD+ \cite{pmlr-v80-cohen18a}, and Multistage AC-SA \cite{ghadimi2013optimal}. 
We consider dimension $d=100$ and initialize all the methods from $x_0^0 = 0$. We run the algorithms Multistage AC-SA, and M-ASG$^*$, having access to the same estimate of $\Delta$. Figures \ref{fig1000}- \ref{fig10000} show the average performance of all the algorithms along with the $95\%$ confidence interval over 50 sample runs while the total number of iterations $n=1000$ and $n=10000$ respectively as the noise level $\sigma^2$ is varied. The simulation results reveal that both M-ASG and M-ASG$^*$ have typically a faster decay of the error in the beginning and outperforms the other algorithms in general when the number of iterations is small to moderate. In this case, the speed-up obtained by M-ASG and M-ASG$^*$ is more prominent if the noise level $\sigma^2$ is smaller. However, as the number of iterations grows, the performance of the algorithms become similar as the variance term dominates. In addition, we would like to highlight that when the noise is small, using $n_1$ as suggested in~\eqref{eq:n1},
M-ASG$^*$ runs stage one longer than M-ASG; hence, enjoys the linear rate of decay for more iterations before the variance term becomes the dominant term. 

For the second set of experiments, we consider a regularized logistic regression problem for binary classification. In particular, we read $10000$ images from the M-NIST \cite{lecun1998mnist} data-set, and our goal is to distinguish the image of digit zero from that of digit eight.\footnote{We provide an experiment with synthetic data for logistic loss in Appendix \ref{numerical_additional}.} The number of samples is $N=1945$, and the size of each image is 20 by 20 after removing the margins (hence $d=400$ after vectorizing the images). At each iteration, we randomly choose a batch size $b$ of images to compute an estimate of the gradient.\footnote{This is an unbiased estimate of the gradient with finite but unknown variance, and therefore we do not use M-ASG$^*$ or other algorithms that need the knowledge of variance.} We choose the regularization parameter equal to $\frac{1}{\sqrt{N}}$ following the standard practice (see e.g. \cite{sridharan2009fast}). In Figure \ref{fig_batch},we compare M-ASG with Standard GD,  Standard AG, $\mu$AGD+ \cite{pmlr-v80-cohen18a}, and AC-SA \cite{ghadimi2013optimal} for $b \in \{50,100,500\}$. The batch size controls the noise level, with larger batches leading to smaller $\sigma$. We run each of these algorithms for 50 times, and plot their average performance and $95\%$ confidence intervals. It can be seen that M-ASG usually start faster, and achieves the asymptotic rate of other algorithms for all different batch sizes.\todo{Add note on independence of $w_k$.}
\section{Conclusion}
In this work, we consider strongly convex smooth optimization problems where we have access to noisy estimates of the gradients. We proposed a multistage method that adapts the choice of the parameters of the Nesterov's accelerated gradient at each stage to achieve the optimal rate. Our method is universal in the sense that it does not require the knowledge of the noise characteristics to operate and can achieve the optimal rate both in the deterministic and stochastic settings. We provided numerical experiments that compare our method with existing approaches in the literature, illustrating that our method performs well in practice. 
\ali{
\subsubsection*{Acknowledgements}
The work of Necdet Serhat Aybat is partially supported by NSF Grant CMMI-1635106. Alireza Fallah is partially supported by Siebel Scholarship. Mert G\"urb\"uzbalaban acknowledges support from the
grants NSF DMS-1723085 and NSF CCF-1814888.
}
\newpage
\bibliographystyle{plain}
\bibliography{main}

\begin{thebibliography}{10}

\bibitem{pmlr-v48-arjevani16}
Yossi Arjevani and Ohad Shamir.
\newblock On the iteration complexity of oblivious first-order optimization
  algorithms.
\newblock In {\em Proceedings of The 33rd International Conference on Machine
  Learning}, volume~48 of {\em Proceedings of Machine Learning Research}, pages
  908--916, New York, New York, USA, 20--22 Jun 2016. PMLR.

\bibitem{RAGM}
Necdet~Serhat Aybat, Alireza Fallah, Mert Gurbuzbalaban, and Asuman Ozdaglar.
\newblock Robust accelerated gradient methods for smooth strongly convex
  functions.
\newblock {\em arXiv preprint arXiv:1805.10579}, 2018.

\bibitem{bach:hal-00608041}
Francis Bach and Eric Moulines.
\newblock {Non-Asymptotic Analysis of Stochastic Approximation Algorithms for
  Machine Learning}.
\newblock In {\em {Neural Information Processing Systems (NIPS)}}, Spain, 2011.

\bibitem{bassily2014private}
R.~Bassily, A.~Smith, and A.~Thakurta.
\newblock Private empirical risk minimization: Efficient algorithms and tight
  error bounds.
\newblock In {\em Foundations of Computer Science (FOCS), 2014 IEEE 55th Annual
  Symposium on}, pages 464--473. IEEE, 2014.

\bibitem{beck2009fast}
Amir Beck and Marc Teboulle.
\newblock A fast iterative shrinkage-thresholding algorithm for linear inverse
  problems.
\newblock {\em SIAM journal on imaging sciences}, 2(1):183--202, 2009.

\bibitem{bubeck2015convex}
S{\'e}bastien Bubeck et~al.
\newblock Convex optimization: Algorithms and complexity.
\newblock {\em Foundations and Trends{\textregistered} in Machine Learning},
  8(3-4):231--357, 2015.

\bibitem{NIPS2012_4543}
Xi~Chen, Qihang Lin, and Javier Pena.
\newblock Optimal regularized dual averaging methods for stochastic
  optimization.
\newblock In F.~Pereira, C.~J.~C. Burges, L.~Bottou, and K.~Q. Weinberger,
  editors, {\em Advances in Neural Information Processing Systems 25}, pages
  395--403. Curran Associates, Inc., 2012.

\bibitem{pmlr-v80-cohen18a}
Michael Cohen, Jelena Diakonikolas, and Lorenzo Orecchia.
\newblock On acceleration with noise-corrupted gradients.
\newblock In {\em Proceedings of the 35th International Conference on Machine
  Learning}, volume~80 of {\em Proceedings of Machine Learning Research}, pages
  1019--1028, Stockholmsmässan, Stockholm Sweden, 2018. PMLR.

\bibitem{aspremontSmooth08}
A.~d'Aspremont.
\newblock Smooth optimization with approximate gradient.
\newblock {\em SIAM Journal on Optimization}, 19(3):1171--1183, 2008.

\bibitem{de2002aspects}
E~de~Klerk.
\newblock {\em Aspects of Semidefinite Programming: Interior Point Algorithms
  and Selected Applications}, volume~65.
\newblock Springer Science \& Business Media, 2002.

\bibitem{devolder2014first}
O.~Devolder, F.~Glineur, and Y.~Nesterov.
\newblock First-order methods of smooth convex optimization with inexact
  oracle.
\newblock {\em Mathematical Programming}, 146(1-2):37--75, 2014.

\bibitem{dieuleveut2017harder}
Aymeric Dieuleveut, Nicolas Flammarion, and Francis Bach.
\newblock Harder, better, faster, stronger convergence rates for least-squares
  regression.
\newblock {\em The Journal of Machine Learning Research}, 18(1):3520--3570,
  2017.

\bibitem{flammarion2015averaging}
N.~Flammarion and F.~Bach.
\newblock From averaging to acceleration, there is only a step-size.
\newblock In {\em Conference on Learning Theory}, pages 658--695, 2015.

\bibitem{GGZ-underdamped-18}
X.~{Gao}, M.~{G{\"u}rb{\"u}zbalaban}, and L.~{Zhu}.
\newblock {Global Convergence of Stochastic Gradient Hamiltonian Monte Carlo
  for Non-Convex Stochastic Optimization: Non-Asymptotic Performance Bounds and
  Momentum-Based Acceleration}.
\newblock {\em ArXiv e-prints}, September 2018.

\bibitem{GGZ-2}
Xuefeng {Gao}, Mert {Gurbuzbalaban}, and Lingjiong {Zhu}.
\newblock {Breaking Reversibility Accelerates Langevin Dynamics for Global
  Non-Convex Optimization}.
\newblock {\em arXiv e-prints}, page arXiv:1812.07725, December 2018.

\bibitem{ghadimi2012optimal}
Saeed Ghadimi and Guanghui Lan.
\newblock Optimal stochastic approximation algorithms for strongly convex
  stochastic composite optimization i: A generic algorithmic framework.
\newblock {\em SIAM Journal on Optimization}, 22(4):1469--1492, 2012.

\bibitem{ghadimi2013optimal}
Saeed Ghadimi and Guanghui Lan.
\newblock Optimal stochastic approximation algorithms for strongly convex
  stochastic composite optimization, ii: shrinking procedures and optimal
  algorithms.
\newblock {\em SIAM Journal on Optimization}, 23(4):2061--2089, 2013.

\bibitem{Hardt-blog}
M.~Hardt.
\newblock Robustness versus acceleration. {A}ugust 18th, 2014.
\newblock
  \url{http://blog.mrtz.org/2014/08/18/robustness-versus-acceleration.html},
  August 2014.

\bibitem{hu2017dissipativity}
Bin Hu and Laurent Lessard.
\newblock Dissipativity theory for {N}esterov's accelerated method.
\newblock In {\em Proceedings of the 34th International Conference on Machine
  Learning}, volume~70 of {\em Proceedings of Machine Learning Research}, pages
  1549--1557, International Convention Centre, Sydney, Australia, 2017. PMLR.

\bibitem{NIPS2009_3817}
Chonghai Hu, Weike Pan, and James~T. Kwok.
\newblock Accelerated gradient methods for stochastic optimization and online
  learning.
\newblock In {\em Advances in Neural Information Processing Systems 22}, pages
  781--789. Curran Associates, Inc., 2009.

\bibitem{pmlr-v75-jain18a}
Prateek Jain, Sham~M. Kakade, Rahul Kidambi, Praneeth Netrapalli, and Aaron
  Sidford.
\newblock Accelerating stochastic gradient descent for least squares
  regression.
\newblock In {\em Proceedings of the 31st Conference On Learning Theory},
  volume~75 of {\em Proceedings of Machine Learning Research}, pages 545--604.
  PMLR, 2018.

\bibitem{Lan2012}
Guanghui Lan.
\newblock An optimal method for stochastic composite optimization.
\newblock {\em Mathematical Programming}, 133(1):365--397, Jun 2012.

\bibitem{lecun1998mnist}
Yann LeCun.
\newblock The mnist database of handwritten digits.
\newblock {\em http://yann. lecun. com/exdb/mnist/}, 1998.

\bibitem{lessard2016analysis}
Laurent Lessard, Benjamin Recht, and Andrew Packard.
\newblock Analysis and design of optimization algorithms via integral quadratic
  constraints.
\newblock {\em SIAM Journal on Optimization}, 26(1):57--95, 2016.

\bibitem{neelakantan2015adding}
Arvind Neelakantan, Luke Vilnis, Quoc~V Le, Ilya Sutskever, Lukasz Kaiser,
  Karol Kurach, and James Martens.
\newblock Adding gradient noise improves learning for very deep networks.
\newblock {\em arXiv preprint arXiv:1511.06807}, 2015.

\bibitem{nemirovsky1983problem}
Arkadii~Semenovich Nemirovsky and David~Borisovich Yudin.
\newblock {\em Problem complexity and method efficiency in optimization.}
\newblock Wiley, 1983.

\bibitem{nesterov_convex}
Yurii Nesterov.
\newblock {\em Introductory Lectures on Convex Optimization: A Basic Course},
  volume~87.
\newblock Springer, 2004.

\bibitem{nitanda2014stochastic}
Atsushi Nitanda.
\newblock Stochastic proximal gradient descent with acceleration techniques.
\newblock In {\em Advances in Neural Information Processing Systems}, pages
  1574--1582, 2014.

\bibitem{candes-restart-acc-grad}
B.~O'Donoghue and E.~Cand{\`e}s.
\newblock Adaptive restart for accelerated gradient schemes.
\newblock {\em Foundations of Computational Mathematics}, 15(3):715--732, Jun
  2015.

\bibitem{raginsky2017non}
M.~Raginsky, A.~Rakhlin, and M.~Telgarsky.
\newblock Non-convex learning via stochastic gradient langevin dynamics: a
  nonasymptotic analysis.
\newblock {\em arXiv preprint arXiv:1702.03849}, 2017.

\bibitem{raginsky2011information}
Maxim Raginsky and Alexander Rakhlin.
\newblock Information-based complexity, feedback and dynamics in convex
  programming.
\newblock {\em IEEE Transactions on Information Theory}, 57(10):7036--7056,
  2011.

\bibitem{schmidt2015non}
Mark Schmidt, Reza Babanezhad, Mohamed Ahmed, Aaron Defazio, Ann Clifton, and
  Anoop Sarkar.
\newblock Non-uniform stochastic average gradient method for training
  conditional random fields.
\newblock In {\em artificial intelligence and statistics}, pages 819--828,
  2015.

\bibitem{shi2018understanding}
Bin Shi, Simon~S Du, Michael~I Jordan, and Weijie~J Su.
\newblock Understanding the acceleration phenomenon via high-resolution
  differential equations.
\newblock {\em arXiv preprint arXiv:1810.08907}, 2018.

\bibitem{sridharan2009fast}
Karthik Sridharan, Shai Shalev-Shwartz, and Nathan Srebro.
\newblock Fast rates for regularized objectives.
\newblock In {\em Advances in Neural Information Processing Systems}, pages
  1545--1552, 2009.

\bibitem{vapnik2013nature}
Vladimir Vapnik.
\newblock {\em The nature of statistical learning theory}.
\newblock Springer science \& business media, 2013.

\bibitem{vaswani2018fast}
Sharan Vaswani, Francis Bach, and Mark Schmidt.
\newblock Fast and faster convergence of sgd for over-parameterized models and
  an accelerated perceptron.
\newblock {\em arXiv preprint arXiv:1810.07288}, 2018.

\bibitem{CIAG}
Hoi-To Wai, Wei Shi, Cesar~A Uribe, Angelia Nedich, and Anna Scaglione.
\newblock On curvature-aided incremental aggregated gradient methods.
\newblock {\em arXiv preprint arXiv:1806.00125}, 2018.

\bibitem{xiao2010dual}
Lin Xiao.
\newblock Dual averaging methods for regularized stochastic learning and online
  optimization.
\newblock {\em Journal of Machine Learning Research}, 11(Oct):2543--2596, 2010.

\bibitem{Zhu2017LinearCA}
Zeyuan~Allen Zhu and Lorenzo Orecchia.
\newblock Linear coupling: An ultimate unification of gradient and mirror
  descent.
\newblock In {\em ITCS}, 2017.

\end{thebibliography}
\newpage
\appendix

\section{Proof of Lemma \ref{beta_intuition}} \label{beta_intuition_proof}
Let us denote the asymptotic convergence rate of the ASG method as a function of $\alpha$ and $\beta$ by $\rho(\al, \be)$. It is well-known that $\rho(\al, \be)$ has the following characterization (see e.g. \cite{lessard2016analysis}, \cite{candes-restart-acc-grad}): 
\begin{equation} \label{rho_nest}
\rho(\alpha,\beta) = \max\{\rho_\mu(\alpha,\beta), \rho_L(\alpha,\beta)\},
\end{equation}
where $\lambda\in\{\mu, L\}$ and $\rho_\lambda$ is defined as:
\begin{equation}
\label{def-delta-lambda}
\rho_\lambda(\alpha,\beta) = \begin{cases}
	\frac{1}{2}|(1+\be)(1-\al\lambda)| + \frac{1}{2}\sqrt{\Delta_\lambda} & \mbox{if } \Delta_\lambda \geq 0, \\
    \sqrt{\be(1-\al\lambda)} & \mbox{otherwise,} 
\end{cases}
\end{equation}
with $\Delta_\lambda = (1+\be)^2 (1-\al\lambda)^2 - 4\be(1-\al\lambda)$. Note that, since $\al \leq \frac{1}{L}$, we have $1 - \al \lambda \geq 0$ for $\lambda\in\{\mu, L\}$; therefore, $\Delta_\lambda \geq 0$ if and only if $(1+\be)^2 (1-\al \lambda) \geq 4 \beta$, which is equivalent to $\frac{1-\sqrt{\al \lambda}}{1+\sqrt{\alpha \lambda}} \geq \beta$. 

Using the fact that $\mu \leq  L$ and $\frac{1-\sqrt{\al \lambda}}{1+\sqrt{\alpha \lambda}}$ is decreasing in $\lambda>0$, we obtain $\frac{1-\sqrt{\al \mu}}{1+\sqrt{\alpha \mu}} \geq \frac{1-\sqrt{\al L}}{1+\sqrt{\alpha L}}$; hence, for $\beta > \frac{1-\sqrt{\al \mu}}{1+\sqrt{\alpha \mu}}$, we have both $\Delta_\mu<0$ and $\Delta_L<0$. 
As a consequence, \eqref{rho_nest} implies that for $\be > \frac{1-\sqrt{\al \mu}}{1+\sqrt{\alpha \mu}}$, we have
\begin{equation}\label{rho_beta}
\rho (\al, \be) = \max\{\sqrt{\beta (1-\al \mu)}, \sqrt{\be (1-\al L)}\} = \sqrt{\beta (1-\al \mu)}.    
\end{equation}
Moreover, for $\be=\frac{1-\sqrt{\al \mu}}{1+\sqrt{\alpha \mu}}$, the two branches in \eqref{def-delta-lambda} take the same value for $\lambda = \mu$ and $\alpha\in(0,1/L]$; therefore, when $\be$ is set to this critical value, we also get $\rho(\alpha,\beta)=\sqrt{\beta (1-\al \mu)}$ for $\alpha\in(0,1/L]$. Note \eqref{rho_beta} is an increasing function of $\beta$ for any $\alpha\in(0,1/L]$; thus, given $\alpha\in(0,1/L]$, the smallest rate possible is equal to  $\inf\{\rho(\alpha,\beta):\ \be \geq \frac{1-\sqrt{\al \mu}}{1+\sqrt{\alpha \mu}}\}=1 - \sqrt{\al \mu}$, which is
the rate given in the statement of \sa{the lemma} and it is achieved by $\be = \frac{1-\sqrt{\al \mu}}{1+\sqrt{\alpha \mu}}$.

Now, we consider the case $\beta \leq \frac{1-\sqrt{\al \mu}}{1+\sqrt{\alpha \mu}}$. From \eqref{rho_nest}, 
if $\rho_\mu (\al, \be) \geq 1 - \sqrt{\al \mu}$, then we also have $\rho(\alpha,\beta)\geq 1 - \sqrt{\al \mu}$. Thus, showing $\rho_\mu (\al, \be) \geq 1 - \sqrt{\al \mu}$ suffices us to claim that for any $\alpha\in(0,1/L]$, the best possible rate is $1-\sqrt{\alpha\mu}$ and this can be achieved by setting $\be=\frac{1-\sqrt{\al \mu}}{1+\sqrt{\alpha \mu}}$. Indeed, as we discussed above, for the case $\beta \leq \frac{1-\sqrt{\al \mu}}{1+\sqrt{\alpha \mu}}$, we have $\Delta_\mu \geq 0$; thus,
\begin{align*}
\rho_\mu(\al, \be) &= \frac{1}{2}(1+\be)(1-\al \mu) + \frac{1}{2}\sqrt{\Delta_\mu}\\
&= \frac{1-\sqrt{\al \mu}}{2} \left ( (1 + \be) (1 + \sqrt{\al \mu}) + \sqrt{(1+\be)^2 (1+ \sqrt{\al \mu})^2 - \frac{4 \be(1+\sqrt{\al \mu})}{1-\sqrt{\al \mu}}} \right ).   
\end{align*}
Therefore, to show $\rho_\mu (\al, \be) \geq 1 - \sqrt{\al \mu}$, we just need to prove
\begin{equation}
\label{eq:cond_rho_mu}
\sqrt{(1+\be)^2 (1+ \sqrt{\al \mu})^2 - \frac{4 \be(1+\sqrt{\al \mu})}{1-\sqrt{\al \mu}}} \geq 2 - (1 + \be) (1 + \sqrt{\al \mu}).    
\end{equation}
Taking the square of both sides of \eqref{eq:cond_rho_mu}, it follows that \eqref{eq:cond_rho_mu} is equivalent to
\begin{equation*}
(1 + \be) (1 + \sqrt{\al \mu}) \geq 1 + \frac{ \be(1+\sqrt{\al \mu})}{1-\sqrt{\al \mu}} 
\end{equation*}
and this holds when $\beta \leq \frac{1-\sqrt{\al \mu}}{1+\sqrt{\alpha \mu}}$. Therefore, for any $\alpha\in(0,1/L]$, we have $\rho_\mu(\alpha,\beta)\geq 1 - \sqrt{\al \mu}$ for $\beta \leq \frac{1-\sqrt{\al \mu}}{1+\sqrt{\alpha \mu}}$. which completes the proof.
\section{Proof of Lemma \ref{from_RAGM}} \label{proof_from_RAGM}
We first state the following lemma which is an extension of Lemma 4.1 in \cite{RAGM} for ASG. 
\begin{lem} \label{storage_update}
Let $P = \tilde{P} \otimes I_d$ where $\tilde{P} \in \mathbb{S}_{+}^{2}$ and consider the function $\mathcal{W}_P(\xi)= (\xi-\xi^*)^\top P  (\xi-\xi^*)$. Then we have
\begin{equation}\label{storage_update_Expec}
\E[\mathcal{W}_P(\xi_{k+1})] \leq \E\left[\begin{bmatrix} 
	\xi_k-\xi^*\\
    \nabla f(y_k)
\end{bmatrix}^\top
\begin{bmatrix} 
	A^\top P A & A^\top P B\\
    B^\top P A & B^\top P B
\end{bmatrix}
\begin{bmatrix} 
	\xi_k-\xi^*\\
    \nabla f(y_k)
\end{bmatrix}\right]+\sigma^2 \alpha^2 \tilde{P}_{11}.
\end{equation}
\end{lem}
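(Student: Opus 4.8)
The plan is to turn the update $\xi_{k+1} = A\xi_k + B\tilde{\nabla}f(y_k,w_k)$ into a recursion for the error $\xi_k - \xi^*$ and then expand the quadratic form $\mathcal{W}_P$. The first observation is that $\xi^* = [x^{*\top}\ \ x^{*\top}]^\top$ is a fixed point of the noiseless iteration: since the rows of $\tilde{A}$ sum to $1$ we have $A\xi^* = \xi^*$, while $y^* = C\xi^* = x^*$ (as $(1+\beta)-\beta = 1$) gives $B\nabla f(y^*) = B\nabla f(x^*) = 0$. Hence
\[
\xi_{k+1} - \xi^* = A(\xi_k - \xi^*) + B\nabla f(y_k) + B e_k, \qquad e_k := \tilde{\nabla}f(y_k,w_k) - \nabla f(y_k),
\]
where, because $y_k = C\xi_k$ is $\xi_k$-measurable and $w_k$ is independent of the past, $\E[e_k \mid \xi_k] = 0$ and $\E[\|e_k\|^2 \mid \xi_k] \le \sigma^2$ by \eqref{unbiased:main}.

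Next I would set $u_k := A(\xi_k - \xi^*) + B\nabla f(y_k)$, which is deterministic given $\xi_k$, so that $\xi_{k+1}-\xi^* = u_k + Be_k$ and
\[
\mathcal{W}_P(\xi_{k+1}) = u_k^\top P u_k + 2 u_k^\top P B e_k + e_k^\top B^\top P B e_k.
\]
Taking $\E[\,\cdot \mid \xi_k]$, the cross term vanishes since $\E[e_k\mid\xi_k]=0$. For the last term, the Kronecker structure gives $B^\top P B = (\tilde{B}^\top \tilde{P}\tilde{B})\otimes I_d = \alpha^2 \tilde{P}_{11} I_d$ (using $\tilde{B} = [-\alpha\ \ 0]^\top$), hence $e_k^\top B^\top P B e_k = \alpha^2 \tilde{P}_{11}\|e_k\|^2$, and since $\tilde{P}\in\mathbb{S}_{+}^{2}$ forces $\tilde{P}_{11}\ge 0$ we get $\E[e_k^\top B^\top P B e_k\mid\xi_k] \le \alpha^2\tilde{P}_{11}\sigma^2$. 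Therefore $\E[\mathcal{W}_P(\xi_{k+1})\mid\xi_k] \le u_k^\top P u_k + \sigma^2\alpha^2\tilde{P}_{11}$.

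Finally, expanding $u_k^\top P u_k$ by bilinearity recovers exactly the block quadratic form
\[
u_k^\top P u_k = \begin{bmatrix}\xi_k-\xi^*\\ \nabla f(y_k)\end{bmatrix}^\top \begin{bmatrix}A^\top P A & A^\top P B\\ B^\top P A & B^\top P B\end{bmatrix}\begin{bmatrix}\xi_k-\xi^*\\ \nabla f(y_k)\end{bmatrix},
\]
and taking total expectation over $\xi_k$ yields \eqref{storage_update_Expec}. The argument is essentially an exact computation followed by one application of the variance bound, so there is no serious obstacle; the only points that need care are checking that $\xi^*$ is a genuine fixed point of the dynamics (so the recursion is really in terms of $\xi_k-\xi^*$) and correctly using the identities $P = \tilde{P}\otimes I_d$, $B = \tilde{B}\otimes I_d$ to reduce $B^\top P B$ to the scalar multiple $\alpha^2\tilde{P}_{11}$ of the identity — which is also where positive semidefiniteness of $\tilde{P}$, via $\tilde{P}_{11}\ge 0$, is used.
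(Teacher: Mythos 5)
Your argument is correct and follows essentially the same route as the paper's proof: both use $A\xi^*=\xi^*$ to write the recursion in terms of $\xi_k-\xi^*$, condition on the past so that unbiasedness kills the cross term, and reduce $B^\top P B$ to $\alpha^2\tilde{P}_{11}I_d$ before applying the variance bound (with $\tilde{P}_{11}\ge 0$). The only cosmetic difference is that you make the noise decomposition $e_k$ explicit, whereas the paper carries out the same cancellation inside a chain of conditional expectations.
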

\begin{proof}
Let $\tx_{k} = \xi_k - \xi^*$ for any $k \geq 0$. Since $\xi^*=A\xi^*$, 
\eqref{dyn_sys:main} implies $\tx_{k+1} = A \tx_k + B (\tilde{\nabla} f(y_k, w_k))$ for $k\geq 0$. Note that, for any $k \geq 1$, $\tx_k$ and $y_k$ are deterministic functions of $\xi_0, \{w_i\}_{i=0}^{k-1}$. Using this fact, along with knowing that $w_k$ is independent of $\xi_0, \{w_i\}_{i=0}^{k-1}$, implies that
\begin{align}
\E[\mathcal{W}_P(\xi_{k+1})] &= \E[\tx_{k+1}^\top P \tx_{k+1}] \nonumber\\
&= \E\left [(A \tx_k + B \tilde{\nabla} f(y_k, w_k))^\top P (A \tx_k + B \tilde{\nabla} f(y_k, w_k))\right] \nonumber \\
&=\E\left [ \E \left[(A \tx_k + B \tilde{\nabla} f(y_k, w_k))^\top P (A \tx_k + B\tilde{\nabla} f(y_k, w_k))\middle| \xi_0, \{w_i\}_{i=0}^{k-1}  \right] \right ] \nonumber\\
&=\E \left [(A \tx_k + B \nabla f(y_k))^\top P (A \tx_k + B \nabla f(y_k)) \right ] \nonumber \\
& \quad + \E\left [ \E \left[\tilde{\nabla} f(y_k, w_k)^\top B^\top P B \tilde{\nabla} f(y_k, w_k) \middle| y_k \right] - \nabla f(y_k)^\top B^\top P B \nabla f(y_k) \right ] \label{lyapunov_iterates1}\\
&=\E \left [(A \tx_k + B \nabla f(y_k))^\top P (A \tx_k + B \nabla f(y_k)) \right ] \nonumber \\
& \quad +\alpha^2 \tilde{P}_{11} \E\left [ \E \left[\tilde{\nabla} f(y_k, w_k)^\top \tilde{\nabla} f(y_k, w_k) \middle| y_k \right] - \nabla f(y_k)^\top \nabla f(y_k) \right ] \label{lyapunov_iterates2}\\
& \leq \E \left [(A \tx_k + B \nabla f(y_k))^\top P (A \tx_k + B \nabla f(y_k)) \right ]  +\sigma^2 \alpha^2 \tilde{P}_{11} \label{lyapunov_iterates3}\\
&= \E\left[\begin{bmatrix} 
	\tx_k\\
    \nabla f(y_k)
\end{bmatrix}^\top
\begin{bmatrix} 
	A^\top P A & A^\top P B\\
    B^\top P A & B^\top P B
\end{bmatrix}
\begin{bmatrix} 
	\tx_k\\
    \nabla f(y_k)
\end{bmatrix}\right]+\sigma^2 \alpha^2 \tilde{P}_{11}. \label{lyapunov_iterates4}
\end{align}
where in \eqref{lyapunov_iterates1} we used 
the equality in \eqref{unbiased:main} and the facts that we mentioned above. Also, \eqref{lyapunov_iterates2} comes from the fact that $B^\top P B = \alpha^2 \tilde{P}_{11} I_d$ which can be shown by substituting $B$ from \eqref{nest_ABCD} and using the assumption $P = \tilde{P} \otimes I_d$. Finally \eqref{lyapunov_iterates3} follows from 
the inequality in \eqref{unbiased:main}, and \eqref{lyapunov_iterates4} is obtained by writing the first term of \eqref{lyapunov_iterates3} in matrix format.
\end{proof}
Similarly, by extending Lemma 4.5 in \cite{RAGM} to the noise setting \eqref{unbiased:main}, for every $k \geq 0$ we obtain
\begin{align*}
\E \left [
\begin{bmatrix} 
	\xi_k-\xi^*\\
    \nabla f(y_k)
\end{bmatrix}^\top
\left(\rho^2 X_1 + (1-\rho^2)X_2\right)
\begin{bmatrix} 
	\xi_k-\xi^*\\
    \nabla f(y_k)
\end{bmatrix}
\right ]
\leq &\rho^2 \E[f(x_k)-f^*]-\E[f(x_{k+1})-f^*]\\
&+ \frac{L\alpha^2}{2}\sigma^2
\end{align*}
where $X_1 = \tilde{X_1} \otimes I_d$ and $X_2 = \tilde{X_2} \otimes I_d$. The rest of the proof of Lemma \ref{from_RAGM} is very similar to the proof of Theorem 4.6 in \cite{RAGM}, and we just need to use the fact that the Kronecker product of two positive semidefinite matrices is positive semidefinite \cite{de2002aspects}.
\section{Proof of Theorem \ref{be_al_thm}} \label{proof_be_al_thm}
Let 
\begin{equation*}
\Gamma \triangleq \rho^2 \tilde{X_1} + (1-\rho^2) \tilde{X_2} -
\begin{bmatrix} 
	\tilde{A}^\top \tilde{P} \tilde{A} - \rho^2 \tilde{P} & \tilde{A}^\top \tilde{P} \tilde{B}\\
    \tilde{B}^\top \tilde{P} \tilde{A} & \tilde{B}^\top \tilde{P} \tilde{B}
\end{bmatrix}   
\end{equation*}
with $\rho^2 = 1-\sqrt{\al \mu}$ and $P =\tilde{P}_\al \otimes I_d$. According to Lemma \ref{from_RAGM}, it suffices to show that $\Gamma \succeq 0_{3}$. Using the Symbolic toolbox in MATLAB, we see that $\Gamma$ has the following properties
\begin{enumerate}[label=(\roman*)]
\item $\begin{aligned}[t]
\Gamma_{3,3} & = \frac{\al(1-L \al)}{2},
\end{aligned}$\label{Gamma_33} 
\item $\Gamma_{2,3} = \Gamma_{3,2}=0$,\label{Gamma_23}
\item $\begin{aligned}[t]
\Gamma_{2,2} &= \frac{\mu \sqrt{\mu \al}(1-\sqrt{\mu\al})^2}{2(1+\sqrt{\mu\al})^2} + \frac{\mu (1-\sqrt{\mu\al})^3}{2(1+\sqrt{\mu\al})^2} - \frac{(1-\sqrt{\mu\al})^2}{2\al (1+\sqrt{\mu\al})^2} + \frac{(1-\sqrt{\mu\al})^3}{2 \al} \\
& = \frac{(1-\sqrt{\mu\al})^2}{2\al (1+\sqrt{\mu\al})^2} \bigg( \al \Big(\mu \sqrt{\mu \al} +\mu (1-\sqrt{\mu\al})\Big) -1 + (1-\sqrt{\mu\al}) (1+\sqrt{\mu\al})^2 \bigg)\\
&= \frac{(1-\sqrt{\al \mu})^3 \sqrt{\mu}}{2\sqrt{\al}(1+\sqrt{\al \mu})} \geq 0 \\
\end{aligned}$\label{Gamma_22}
\item $\det(\Gamma) = 0$.\label{Gamma_det}
\end{enumerate}
In fact, if $\al = \frac{1}{L}$, then
\begin{equation*}
\Gamma = \frac{(1-\sqrt{\al \mu})^3 \sqrt{\mu}}{2\sqrt{\al}(1+\sqrt{\al \mu})}
\begin{bmatrix}
1&-1&0\\ -1&1&0\\0&0&0    
\end{bmatrix},
\end{equation*}
which is positive semidefinite. Now, consider the case that $\al < \frac{1}{L}$.
For any $\epsilon > 0$, let
\begin{equation*}
\Gamma^\epsilon \triangleq \Gamma + \epsilon 
\begin{bmatrix}
1&0&0\\ 0&0&0\\0&0&0
\end{bmatrix}.
\end{equation*}
Note that, for any $\epsilon > 0$, $\ref{Gamma_33}$ implies $\Gamma_{3,3}^\epsilon > 0$. This fact, along with \ref{Gamma_23} and \ref{Gamma_22}, indicates that $\det(\Gamma_{[2:3],[2:3]}^\epsilon) > 0$. Hence, 
\begin{equation*}
\det(\Gamma^\epsilon) = \det(\Gamma) + \epsilon \det(\Gamma_{[2:3],[2:3]}^\epsilon) = \epsilon \det(\Gamma_{[2:3],[2:3]}^\epsilon) > 0    
\end{equation*}
where the second equality comes from \ref{Gamma_det}.
Therefore, the determinant of $\Gamma^\epsilon$, itself, and two submatrices $\Gamma_{3,3}^\epsilon$ and $\Gamma_{[2:3],[2:3]}^\epsilon$ are all positive. Thus, by Sylvester's criterion, $\Gamma^\epsilon$ is positive definite for any $\epsilon >0$. As a consequence, since $\Gamma = \lim_{\epsilon \to 0} \Gamma^\epsilon$, $\Gamma$ is positive semidefinite.
\section{Proof of Theorem \ref{non_assym_thm}}\label{proof_non_assym_thm}
Using Theorem \ref{be_al_thm}, for every $k \geq 1$, we have
\begin{align*}
\E\left [V_{P_\al}(\xi_{k+1}) \right] & \leq  (1-\frac{c}{\sqrt{\kappa}}) \E\left [V_{P_\al}(\xi_{k}) \right] + \frac{\sigma^2}{2L}c^2(1+ c^2)\\
& \leq (1-\frac{c}{\sqrt{\kappa}}) \E\left [V_{P_\al}(\xi_{k}) \right] + \frac{\sigma^2}{L}c^2
\end{align*}
where in the last inequality we used the fact that $c^2 \leq 1$. Using this bound recursively for $n$ times, we obtain
\begin{align*}
 \E\left [V_{P_\al}(\xi_{n+1}) \right] & \leq (1-\frac{c}{\sqrt{\kappa}})^n \E\left [V_{P_\al}(\xi_{1}) \right] + \frac{\sigma^2}{L}c^2\sum_{i=1}^n (1-\frac{c}{\sqrt{\kappa}})^{n-i}\\
 & \leq \exp(-n\frac{c}{\sqrt{\kappa}}) \E\left [V_{P_\al}(\xi_{1}) \right] + \frac{\sigma^2}{L}c^2\frac{1-(1-\frac{c}{\sqrt{\kappa}})^n}{1-(1-\frac{c}{\sqrt{\kappa}})}\\ 
  & \leq \exp(-n\frac{c}{\sqrt{\kappa}}) \E\left [V_{P_\al}(\xi_{1}) \right] + \frac{\sigma^2}{L}c \sqrt{\kappa}\\ 
 \end{align*}
 where the second inequality follows from the inequality that $1-t \leq \exp(-t)$ for every $t \geq 0$, and the third inequality is obtained by replacing $1-(1-\frac{c}{\sqrt{\kappa}})^n$ by $1$.
\section{Proof of Corollary \ref{1stage_result}} \label{proof_1stage} 
We first show $\al_1 \leq \frac{1}{L}$. Note that, by assumption, $n$ can be written as $p\sqrt{\kappa} n_0$ where $n_0 \geq 2\log(p \sqrt{\kappa})$ and $n_0 \geq e$. This assumption, along with the fact that $\frac{\log n}{n}$ is a decreasing function of $n$ as $n \geq e$, implies
\begin{align}
\frac{p \sqrt{\kappa} \log n}{n} &= \frac{p \sqrt{\kappa} \log (p \sqrt{\kappa} n_0)}{p \sqrt{\kappa}n_0 }\nonumber\\
& = \frac{\log (p \sqrt{\kappa}) + \log n_0}{n_0}\nonumber\\
& \leq \frac{1}{2} + \frac{\log n_0}{n_0}\label{ineq_al_bound_1}\\
& \leq 1 \label{ineq_al_bound_2}
\end{align}
where in \eqref{ineq_al_bound_1} we used the assumption $n_0 \geq 2\log (p \sqrt{\kappa})$, and \eqref{ineq_al_bound_2} follows from the fact that $n_0 \geq e$, and therefore, $\tfrac{\log n_0}{n_0} \leq 1/e \leq 1/2$.

Next, using Theorem \ref{non_assym_thm} with $c = \frac{p \sqrt{\kappa} \log n}{n}$ immediately gives the desired bound.
 \section{Proof of Lemma \ref{connector}} \label{proof_connector}
 First, note that $\xi_{1}^{k+1} = \left [{x_{n_k+1}^k}^\top,{x_{n_k+1}^k}^\top \right ]^\top$, and therefore,
\begin{align}
(\xi_{1}^{k+1} - \xi^*&)^\top P_{\al_{k+1}} (\xi_{1}^{k+1} - \xi^*) \nonumber\\
& = (\xi_{1}^{k+1} - \xi^*)^\top
\begin{bmatrix}
\sqrt{\tfrac{1}{2\al_{k+1}}}\\ \sqrt{\tfrac{\mu}{2}}-\sqrt{\tfrac{1}{2\al_{k+1}}}
\end{bmatrix}
\begin{bmatrix}
\sqrt{\tfrac{1}{2\al_{k+1}}}& \sqrt{\tfrac{\mu}{2}}-\sqrt{\tfrac{1}{2\al_{k+1}}}
\end{bmatrix}
(\xi_{1}^{k+1} - \xi^*) \nonumber\\
&= \|x_{n_k+1}^k - x^* \|^2 \left (\frac{1}{2\al_{k+1}}+(\sqrt{\frac{\mu}{2}}-\sqrt{\frac{1}{2\al_{k+1}}})^2+\frac{\sqrt{2}}{\sqrt{\al_{k+1}}}(\sqrt{\frac{\mu}{2}}-\sqrt{\frac{1}{2\al_{k+1}}})\right ) \nonumber\\
&= \frac{\mu}{2} \|x_{n_k+1}^k - x^* \|^2. \label{eq:mu_2}
\end{align}
Plugging \eqref{eq:mu_2} into \eqref{lyapunov_def} for $V_{P_{\al_{k+1}}}(\xi_{1}^{k+1})$ yields
\begin{align}
V_{P_{\al_{k+1}}}(\xi_{1}^{k+1}) &= (\xi_{1}^{k+1} - \xi^*)^\top P_{\al_{k+1}} (\xi_{1}^{k+1} - \xi^*) + f(x_{n_k+1}^k) - f^* \nonumber \\ 
&= \frac{\mu}{2} \|x_{n_k+1}^k - x^* \|^2 + f(x_{n_k+1}^k) - f^* \nonumber \\
& \leq 2(f(x_{n_k+1}^k) - f^*) \label{ineq_1_lem}\\
& \leq 2 V_{P_{\al_k}}(\xi_{n_k+1}^k) \label{ineq_2_lem}
\end{align}
where \eqref{ineq_1_lem} follows from \eqref{ineq_S} with $x=x_{n_k+1}^k$ and $y=x^*$. Finally, taking expectation from \eqref{ineq_2_lem} completes the proof.
\section{Proof of Theorem \ref{main_result}}\label{proof_main_result}
We claim that for every $k \geq 1$ 
\begin{equation}\label{induction_i}
 \E\left [V_{P_{\al_k}}(\xi_{n_k+1}^k) \right] \leq \frac{2}{2^{(p+1)(k-1)}} \left ( \exp(-n_1/\sqrt{\kappa}) (f(x_0^0)-f^*)\right ) + \frac{\sigma^2 \sqrt{\kappa}}{L 2^{k-1}}. 
\end{equation}
which implies \eqref{k_stage_bound} as $V_{P_{\al_k}}(\xi_{n_k+1}^k) \geq f(x_{n_k+1}^k)-f^*$.
We show \eqref{induction_i} by induction on $k$. For $k=1$, using Theorem \ref{non_assym_thm}, we obtain
\begin{align}
\E\left [V_{P_{\al_1}}(\xi_{n_1+1}^1) \right] &\leq \exp(-\frac{n_1}{\sqrt{\kappa}}) \E\left [V_{P_{\al_1}}(\xi_{1}) \right] + \frac{\sigma^2 \sqrt{\kappa}}{L}\nonumber \\
& \leq 2\exp(-\frac{n_1}{\sqrt{\kappa}}) (f(x_0^0)-f^*) + \frac{\sigma^2 \sqrt{\kappa}}{L} \label{first_stage_bound}
\end{align}
where the second inequality comes from the inequality $V_{P_{\al_1}}(\xi_{1}) \leq 2(f(x_0^0)-f^*)$ which can be derived similar to \eqref{ineq_1_lem}.

Next, we assume \eqref{induction_i} holds for $k$ and show it also holds for $k+1$. Note that
\begin{align}
\E & \left [V_{P_{\al_{k+1}}}(\xi_{n_{k+1}+1}^{k+1}) \right] \leq \exp \left (-n_{k+1} \sqrt{\frac{\al_{k+1} L}{\kappa}} \right )  \E\left [V_{P_{\al_{k+1}}}(\xi^{k+1}_{1}) \right] + \frac{\sigma^2 \sqrt{\kappa} \sqrt{\al_{k+1} L} }{L} \label{t_1} \\
& = \exp \left (-\log(2^{p+2}) \right) \E\left [V_{P_{\al_{k+1}}}(\xi^{k+1}_{1}) \right] + \frac{\sigma^2 \sqrt{\kappa}}{2^{k+1}L} \nonumber \\
& \leq \frac{2}{2^{p+2}} \E\left [V_{P_{\al_{k}}}(\xi^{k}_{n_k+1}) \right] + \frac{\sigma^2 \sqrt{\kappa}}{2^{k+1}L} \label{t_2}\\
&\leq \frac{1}{2^{p+1}} \cdot \frac{2}{2^{(p+1)(k-1)}} \left ( \exp(-n_1/\sqrt{\kappa}) (f(x_0^0)-f^*)\right ) + \frac{\sigma^2 \sqrt{\kappa}}{L} (\frac{1}{2^{k+1}} + \frac{1}{2^{p+k}}) \label{t_3}\\
&\leq \frac{2}{2^{(p+1)k}} \left ( \exp(-n_1/\sqrt{\kappa}) (f(x_0^0)-f^*)\right ) + \frac{\sigma^2 \sqrt{\kappa}}{L 2^{k}} \label{t_4}
\end{align}
where, \eqref{t_1} and \eqref{t_2} are obtained by using Theorem \ref{non_assym_thm} and Lemma \ref{connector}, respectively, and in \eqref{t_3} we used the assumption that \eqref{induction_i} holds for $k$.
\section{Proof of Theorem \ref{thm_result}}\label{proof_thm_result}
First, we will show that for every $k \geq 1$ and $0 \leq m \leq n_k+1 $, we have
\begin{align}\label{n_bound}
 \E\left [f(x_{m}^{k+1})\right] - f^* \leq  \frac{4}{2^{(p+1)(k-1)}} \left ( \exp(-n_1/\sqrt{\kappa}) (f(x_0^0)-f^*)\right )  + \frac{9\sigma^2 \sqrt{\kappa}}{4L 2^{k-1}}.
\end{align}
Indeed, 
\begin{align}
\E \left [f(x_{m}^{k+1})\right] - f^* & \leq \E\left [V_{P_{\al_{k+1}}}(\xi_{m}^{k+1}) \right] \nonumber \\
& \leq \exp \left (-m \sqrt{\frac{\al_{k+1} L}{\kappa}} \right )  \E\left [V_{P_{\al_{k+1}}}(\xi^{k+1}_{1}) \right] + \frac{\sigma^2 \sqrt{\kappa} \sqrt{\al_{k+1} L} }{L} \label{t_5} \\
& \leq 2 \E\left [V_{P_{\al_{k}}}(\xi^{k}_{n_k+1}) \right] + \frac{\sigma^2 \sqrt{\kappa}}{2^{k+1}L} \label{t_6}\\
& \leq \frac{4}{2^{(p+1)(k-1)}} \left ( \exp(-n_1/\sqrt{\kappa}) (f(x_0^0)-f^*)\right ) + \frac{9 \sigma^2 \sqrt{\kappa}}{4 L 2^{k-1}} \label{t_7}
\end{align}
where, \eqref{t_5} and \eqref{t_6} follows again from Theorem \ref{non_assym_thm} and Lemma \ref{connector}, and we obtain \eqref{t_7} using \eqref{induction_i}. 

Recall the definition $N_K(p,n_1) \triangleq \sum_{k=1}^{K} n_k$ 
which denotes the total number of stochastic gradient iterations required to complete $K$ stages of M-ASG for parameter $p$ and first-stage iteration number $n_1$ fixed. Given the computational budget of $n$ iterations such that $n\geq n_1$, let $K$ be the largest number such that $n \geq N_K\triangleq N_K(p,n_1)$. As a result, at iteration $n$ we are in stage $K+1$. \ali{Note that \eqref{N_K} implies $2^{K-1} \geq \tfrac{N_K - n_1 +\Psi}{\Psi}$ with $\Psi = 4 \ceil{(p+2) \sqrt{\kappa }\log(2)}$; therefore
\begin{equation}\label{ineq_corr_1}
\frac{1}{2^{K-1}} \leq  \frac{\Psi}{N_K - n_1+\Psi}.
\end{equation}
Thus, we get the following upper bound on the suboptimality:
\begin{align}\label{ineq_corr_2}
\E \left [f(x_n)\right] - f^* & =\E \left [f(x_{n-N_K}^{K+1})\right] - f^* \nonumber \\
& \leq \frac{4}{2^{(p+1)(K-1)}} \left ( \exp(-n_1/\sqrt{\kappa}) (f(x_0^0)-f^*)\right ) + \frac{9 \sigma^2 \sqrt{\kappa}}{4 L 2^{K-1}},
\end{align}
and by substituting \eqref{ineq_corr_1} in \eqref{ineq_corr_2}
we obtain the bound
\begin{align}\label{ineq_corr_3}
\E & \left [f(x_n)\right] - f^* \nonumber \\
& \leq \bigO(1) \left (\frac{\left (4(p+1)\sqrt{\kappa }\log(2)\right )^{p+1}}{(N_K-n_1+\Psi)^{p+1}} \left ( \exp(-n_1/\sqrt{\kappa}) (f(x_0^0)-f^*)\right ) +  \frac{(p+1) \sigma^2 }{(N_K-n_1+\Psi) \mu} \right ).
\end{align}}
Next, by \eqref{N_K}, $N_{K+1}- n_1 \leq 2(N_K-n_1+\Psi)$, and thus, $n - n_1 \leq 2 (N_K - n_1+\Psi)$. Replacing $\frac{1}{N_K - n_1+\Psi}$ by $\frac{2}{n-n_1}$ in \eqref{ineq_corr_3} completes the proof of \eqref{error_n_iterations}.
\section{Proof of Corollary~\ref{corr_result}}\label{proof_corr_result}
\ali{Note that by setting $n_1 = \ceil{(p+1) \sqrt{\kappa}\log \left (12(p+1) \kappa \right )}$, we have $\exp(-n_1/\sqrt{\kappa}) \leq \frac{1}{\left (16 \log (2) (p+1) \sqrt{\kappa} \right )^{p+1}}$; hence, plugging $n_1 = \ceil{(p+1) \sqrt{\kappa}\log \left (12(p+1) \kappa \right )}$ in \eqref{error_n_iterations} implies the following bound with an $\bigO(1)$ constant that does not depend on $\mu$, $L$ and $x_0^0$:
\begin{align}
\label{eq:n2-bound}
\bigO(1)\left (\frac{2^{-(p+1)}}{(n-n_1)^{p+1}}(f(x_0)-f^*) + \frac{(p+1) \sigma^2 }{(n-n_1) \mu} \right).
\end{align}
Finally, note that $n \geq 2 n_1$; 
therefore, $n-n_1\geq n/2$ 
and using it in \eqref{eq:n2-bound} completes the proof.}
\section{Proof of Corrolary \ref{ep_lower_bound}} \label{proof_ep_lower_bound}
By plugging $p=1$ and $n_1= \ceil{\sqrt{\kappa}\log \left ( \frac{4\Delta}{\epsilon}\right )}$ in \eqref{k_stage_bound}, it is straightforward to check the bias term is bounded by $\tfrac{\epsilon}{2}$. Next, consider running M-ASG with given parameters, possibly without knowing and/or specifying the exact number of stages. 
Consider the end of the $K$-th stage, where $K \triangleq \ceil{\log_2(\frac{\sigma^2 \sqrt{\kappa}}{L \epsilon})}+2$. 
Since $\frac{1}{2^{K-1}} \leq \frac{L \epsilon}{2\sigma^2 \sqrt{\kappa}}$, the variance term in \eqref{k_stage_bound} is also bounded by $\tfrac{\epsilon}{2}$, and as a result $x_{n_K+1}^K$ is an $\epsilon-$solution.

Now, by using \eqref{N_K}, we can bound the number of iterations for completing $K$ stages:
\begin{align}
N_K = n_1 + (2^{K+1}-4) \ceil{\sqrt{\kappa}\log(8)} & \leq n_1 + 2(1+\log(8)) \left (2^K \sqrt{\kappa} \right) \label{ineq_corr_4}\\
& \leq \ceil{\sqrt{\kappa}\log \left ( \frac{4\Delta}{\epsilon}\right )} + 16(1+\log(8)) \frac{\sigma^2}{\mu \epsilon} \label{ineq_corr_5} 
\end{align}
where in \eqref{ineq_corr_4} we used the fact that 
$\ceil{\sqrt{\kappa}\log(8)} \leq (1+\tfrac{1}{\log(8)})\sqrt{\kappa} \log(8)$ since $\kappa\geq 1$, and \eqref{ineq_corr_5} follows from the bound $K \leq 3 + \log_2(\frac{\sigma^2 \sqrt{\kappa}}{L \epsilon})$. 
\ali{
\section{Results for More General Noise Setting}\label{app_general_noise}
In this section, we show how our analysis can be extended to a more general noise setting \mg{where the bound on the variance can depend on the distance to the optimal solution}. More formally, we assume \sa{that at $x \in \mathbb{R}^d$,} we have access to the noisy gradient $\tilde{\nabla} f(x, w)$ \sa{such that for some $\sigma, \eta\geq 0$,}
\begin{equation}\label{general_noise}
\begin{aligned}
\E&[\tilde{\nabla}f(x,w)|x] = \nabla f(x) \\
\E& \left [\|\tilde{\nabla}f(x,w) - \nabla f(x)\|^2\middle|x\right ] \leq \sigma^2 + \eta^2 \|x-x^*\|^2,
\end{aligned}
\end{equation}
where $w$ is a random variable independent of previous iterates.}

\ali{In what follows, we first show how the results of Theorem \ref{be_al_thm} and Lemma \ref{connector} extends to this setting, and then briefly discuss the results 
\sa{of our multistage scheme for this noise setting}. 
\begin{thm}\label{be_al_thm_extn}
Let \Sml with $\kappa \geq 4$.
Consider the ASG iterations given 
in \eqref{nest:main} under noise model \sa{in} \eqref{general_noise}. 
\sa{For} $\alpha \in (0,\bar{\alpha}]$ and $\beta=\tfrac{1-\sqrt{\al\mu}}{1+\sqrt{\al\mu}}$ with 
\mg{\begin{equation}\label{def-bar-alpha}
\bar{\alpha} := \begin{cases} 
\min \{ \frac{1}{L}, \frac{\mu^3}{(60 \eta^2)^2} \} & \mbox{if } \eta>0, \\
\frac{1}{L} & \mbox{if } \eta=0, 
\end{cases}\end{equation}}
\sa{it follows that}
\begin{equation}
\E\left [V_{Q_\al}(\xi_{k+1}) \right] \leq (1-\sqrt{\al \mu}/3) \E\left [V_{Q_\al}(\xi_{k}) \right] + 2 \sigma^2 \al,
\end{equation}
for every $k \geq 0$, where $Q_\al = \tilde{Q}_\al \otimes I_d$ with
\begin{eqnarray*}
\tilde{Q}_\al = \tilde{P}_\al + 2 \alpha \eta^2 \tilde{C}^\top \tilde{C} = {\scriptstyle \begin{bmatrix}
\sqrt{\tfrac{1}{2\al}}\\ \sqrt{\tfrac{\mu}{2}}-\sqrt{\tfrac{1}{2\al}}
\end{bmatrix}
\begin{bmatrix}
\sqrt{\tfrac{1}{2\al}}& \sqrt{\tfrac{\mu}{2}}-\sqrt{\tfrac{1}{2\al}}
\end{bmatrix}} + 2 \alpha \eta^2 {\scriptstyle
\begin{bmatrix}
1+\beta\\ -\beta
\end{bmatrix}
\begin{bmatrix}
1+\beta& -\beta
\end{bmatrix}}.
\end{eqnarray*}
\end{thm}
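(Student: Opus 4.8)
The plan is to follow the proof of Theorem~\ref{be_al_thm} in Appendix~\ref{proof_be_al_thm} (and the underlying Lemmas~\ref{storage_update}--\ref{from_RAGM}) with two modifications: retracking how the oracle noise enters the Lyapunov recursion, and replacing the Lyapunov matrix $\tilde{P}_\al$ by the enlarged $\tilde{Q}_\al=\tilde{P}_\al+2\al\eta^2\tilde{C}^\top\tilde{C}$ so as to absorb the new iterate-dependent part of the variance. The noise enters Lemma~\ref{storage_update} only through the conditional second moment $\E[\|\tilde{\nabla}f(y_k,w_k)-\nabla f(y_k)\|^2\mid y_k]$ at step~\eqref{lyapunov_iterates3}, and enters the analog of Lemma~4.5 of~\cite{RAGM} only through the matching $\tfrac{L\al^2}{2}$-term; since the oracle is queried at $y_k$, under \eqref{general_noise} this conditional variance is $\le\sigma^2+\eta^2\|y_k-x^*\|^2$ rather than $\sigma^2$. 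As the LMI~\eqref{LMI_nest} itself does not involve the noise, the resulting noise-version of Lemma~\ref{from_RAGM} reads: if \eqref{LMI_nest} holds for $\tilde{P}$ and $\rho$, then for all $k\ge0$,
\[
\E[V_P(\xi_{k+1})]\le\rho^2\,\E[V_P(\xi_k)]+\al^2\bigl(\tilde{P}_{1,1}+\tfrac{L}{2}\bigr)\bigl(\sigma^2+\eta^2\,\E\|y_k-x^*\|^2\bigr).
\]

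I would apply this with $\tilde{P}=\tilde{Q}_\al$ (which is positive semidefinite, being a sum of two positive semidefinite matrices). This leaves two things to do: (a) verify the LMI~\eqref{LMI_nest} for $\tilde{Q}_\al$ at a suitable rate, and (b) dominate the leftover term $\eta^2\E\|y_k-x^*\|^2$ by $V_{Q_\al}$. For (b), write $y_k-x^*=C(\xi_k-\xi^*)$ with $C=\tilde{C}\otimes I_d$ ($\tilde{C}$ from~\eqref{nest_ABCD}); strong convexity gives $f(x_k)-f^*\ge\tfrac{\mu}{2}\|x_k-x^*\|^2$, so $V_{Q_\al}(\xi_k)\ge V_{P_\al}(\xi_k)\ge(\xi_k-\xi^*)^\top\bigl(P_\al+\tfrac{\mu}{2}T^\top T\bigr)(\xi_k-\xi^*)$, and $P_\al+\tfrac{\mu}{2}T^\top T$ is positive definite. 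Computing the single relevant generalized eigenvalue of $C^\top C$ with respect to $P_\al+\tfrac{\mu}{2}T^\top T$ --- which, for $\be=\tfrac{1-\sqrt{\al\mu}}{1+\sqrt{\al\mu}}$, equals $\tfrac{2}{\mu}\cdot\tfrac{1+\al\mu}{(1+\sqrt{\al\mu})^2}\le\tfrac{2}{\mu}$ --- yields $\|y_k-x^*\|^2\le\tfrac{2}{\mu}\,V_{Q_\al}(\xi_k)$.

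For (a), I would rerun the symbolic computation of Appendix~\ref{proof_be_al_thm} with $\tilde{Q}_\al$ in place of $\tilde{P}_\al$ and the relaxed rate $\rho^2=1-\tfrac{2}{3}\sqrt{\al\mu}$ (slightly larger than the $1-\sqrt{\al\mu}$ used there). Relative to Appendix~\ref{proof_be_al_thm}, the matrix $\Gamma$ that must be shown positive semidefinite changes in two ways: evaluation at the larger $\rho^2$, which opens up positive-definite margin (here the hypothesis $\kappa\ge4$, i.e.\ $\sqrt{\al\mu}\le1/\sqrt{\kappa}\le1/2$, is used to keep $\be$ and $(1-\sqrt{\al\mu})^{-1}$ bounded); and perturbations of operator norm $O(\al\eta^2)$ coming from the $2\al\eta^2\tilde{C}^\top\tilde{C}$ terms inside $\tilde{A}^\top\tilde{Q}_\al\tilde{A}-\rho^2\tilde{Q}_\al$, $\tilde{A}^\top\tilde{Q}_\al\tilde{B}$, and $\tilde{B}^\top\tilde{Q}_\al\tilde{B}$. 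Imposing $\al\le\bar{\alpha}$ makes the margin dominate the perturbation, so $\Gamma\succeq0$ by Sylvester's criterion exactly as in Appendix~\ref{proof_be_al_thm}. Combining (a) and (b) with the noise-version of Lemma~\ref{from_RAGM} and using $\al^2\bigl((\tilde{Q}_\al)_{1,1}+\tfrac{L}{2}\bigr)=\tfrac{\al}{2}(1+\al L)+2\al^3\eta^2(1+\be)^2\le2\al$ for $\al\le\bar{\alpha}$ gives
\[
\E[V_{Q_\al}(\xi_{k+1})]\le\Bigl(1-\tfrac{2}{3}\sqrt{\al\mu}+\tfrac{4\al\eta^2}{\mu}\Bigr)\E[V_{Q_\al}(\xi_k)]+2\sigma^2\al,
\]
and the calibration $\bar{\alpha}\le\mu^3/(60\eta^2)^2$ is exactly what makes $\tfrac{4\al\eta^2}{\mu}\le\tfrac{1}{3}\sqrt{\al\mu}$, yielding the claimed bound. (Alternatively, one can apply the noise-version of Lemma~\ref{from_RAGM} with the original $\tilde{P}_\al$ --- so Appendix~\ref{proof_be_al_thm} applies verbatim --- derive the analogous recursion for $V_{P_\al}$, and then pass to $V_{Q_\al}$ at the cost of a $1+O(\al\eta^2/\mu)$ factor via the same bound as in (b), avoiding any new LMI check.)

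The step I expect to be the main obstacle is (a): certifying that the rate relaxation leaves enough positive-definite margin in $\Gamma$ to swallow the $O(\al\eta^2)$ perturbation of the LMI matrices. This is what pins down the constant $60$ (equivalently the threshold $\bar{\alpha}$ when $\eta>0$) and the hypothesis $\kappa\ge4$; as in Appendix~\ref{proof_be_al_thm}, I would discharge it by an explicit symbolic evaluation of $\Gamma$ together with Sylvester's criterion rather than by hand, keeping the margin and perturbation estimates just sharp enough that the clean threshold $\mu^3/(60\eta^2)^2$ suffices.
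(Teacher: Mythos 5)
Your primary route (a) --- re-verifying the LMI \eqref{LMI_nest} with $\tilde P=\tilde Q_\al$ at the relaxed rate $\rho^2=1-\tfrac{2}{3}\sqrt{\al\mu}$ --- does not go through, and the failure is structural rather than a matter of finishing the symbolic computation. The $(3,3)$ entry of the matrix $\Gamma$ you must show positive semidefinite is $[\rho^2\tilde X_1+(1-\rho^2)\tilde X_2]_{3,3}-\tilde B^\top\tilde Q_\al\tilde B=\tfrac{\al(2-L\al)}{2}-\al^2\bigl(\tfrac{1}{2\al}+2\al\eta^2(1+\be)^2\bigr)=\tfrac{\al(1-L\al)}{2}-2\al^3\eta^2(1+\be)^2$. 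Because $[\tilde X_1]_{3,3}=[\tilde X_2]_{3,3}$, this entry is independent of $\rho$, so relaxing the rate creates no margin in that direction; and at $\al=1/L$ it is strictly negative for every $\eta>0$. Since $\al=1/L$ is admissible under \eqref{def-bar-alpha} whenever $1/L\le\mu^3/(60\eta^2)^2$ (e.g.\ $\eta$ small but positive), $\Gamma$ then has a negative diagonal entry and cannot be PSD. The threshold $\bar\al$ cannot repair this, because the needed condition would have to couple $\al\eta^2$ to the slack $1-L\al$, which $\bar\al$ does not do. So the LMI genuinely fails for $\tilde Q_\al$, and the extra quadratic $2\al\eta^2\tilde C^\top\tilde C$ must be handled outside the LMI.

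Your parenthetical fallback is exactly what the paper does in Appendix \ref{app_general_noise}: keep the LMI for the original $\tilde P_\al$ at rate $1-\sqrt{\al\mu}$ (so Appendix \ref{proof_be_al_thm} applies verbatim), obtain $\E[V_{P_\al}(\xi_{k+1})]\le(1-\sqrt{\al\mu})\E[V_{P_\al}(\xi_k)]+\al\sigma^2+\al\eta^2\E\|y_k-x^*\|^2$, and then pass between $V_{P_\al}$ and $V_{Q_\al}$ at the cost of $1+O(\al\eta^2/\mu)$ factors, using $\kappa\ge4$ and $\al\le\mu^3/(60\eta^2)^2$ to absorb everything into the rate $1-\sqrt{\al\mu}/3$ and the noise constant $2\al\sigma^2$. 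Your step (b) is correct and in fact sharper than the paper's: the generalized-eigenvalue computation giving $C^\top C\preceq\tfrac{2}{\mu}\cdot\tfrac{1+\al\mu}{(1+\sqrt{\al\mu})^2}\,(P_\al+\tfrac{\mu}{2}T^\top T)$, hence $\|y_k-x^*\|^2\le\tfrac{2}{\mu}V_{P_\al}(\xi_k)$, checks out, whereas the paper splits $(\xi-\xi^*)^\top C^\top C(\xi-\xi^*)\le 2(1+\be)^2\|x_{k+1}-x^*\|^2+2\be^2\|x_k-x^*\|^2$ and ends up with the cruder $\tfrac{16}{\mu}V_{P_\al}(\xi_{k+1})+\tfrac{4}{\mu}V_{Q_\al}(\xi_k)$, which is where the constants $40$ and then $60$ originate. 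Had you committed to the fallback route and carried out the bookkeeping, the proof would be complete (with room to spare in the constants); as written, the main line of attack is the one that cannot work.
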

\begin{proof}
First, note that similar to the proof of 
\sa{Lemma \ref{from_RAGM}} and by using $\alpha L \leq 1$, we can show
\begin{equation}\label{ineq_g_1}
\E\left [V_{P_\al}(\xi_{k+1}) \right] \leq (1-\sqrt{\al \mu}) \E\left [V_{P_\al}(\xi_{k}) \right] + \alpha \sigma^2 + \alpha \eta^2 \E \left [\|y_k - x^*\|^2 \right ].
\end{equation}
Using $y_k = C \xi_k$, we can substitute $\|y_k - x^*\|^2$ by $(\xi_k-\xi^*)^\top C^\top C (\xi_k-\xi^*)$ in \eqref{ineq_g_1}; \sa{hence,} 
\begin{equation}\label{ineq_g_2}
\begin{aligned}
\E\left [V_{P_\al}(\xi_{k+1}) \right] & \leq (1-\sqrt{\al \mu}) \E\left [V_{P_\al}(\xi_{k}) \right] + \frac{1}{2} \E \left [ (\xi_k-\xi^*)^\top 2 \alpha \eta^2 C^\top C (\xi_k-\xi^*) \right ] + \alpha \sigma^2\\
& \leq (1-\sqrt{\al \mu}) \E\left [V_{Q_\al}(\xi_{k}) \right]+ \alpha \sigma^2,
\end{aligned}
\end{equation}
where the last inequality follows from $1-\sqrt{\al \mu} \geq 1/2$ which is true since $\alpha \leq \sa{1/L}$ and $\kappa \geq 4$.
Also note that
\begin{align}
 (\xi_k-\xi^*)^\top C^\top C (\xi_k-\xi^*) &= \| (1+\beta) (x_{k+1}-x^*) - \beta (x_k-x^*)\|^2 \nonumber \\ 
 &\leq 2 (1+\beta)^2 \|x_{k+1}-x^*\|^2 + 2 \beta^2 \|x_k-x^*\|^2 \label{ineq_g_3} \\
 & \leq \frac{16}{\mu} \left ( f(x_{k+1})-f^* \right ) + \frac{4}{\mu} \left ( f(x_{k})-f^* \right ) \label{ineq_g_4} \\
 & \leq \frac{16}{\mu} V_{P_\al}(\xi_{k+1}) + \frac{4}{\mu} V_{Q_\al}(\xi_{k}). \label{ineq_g_5} 
\end{align}
\sloppy where \eqref{ineq_g_3} follows from $(a+b)^2 \leq 2 a^2 + 2b^2$ and \eqref{ineq_g_4} \sa{follows} from $\beta \leq 1$ and the strong convexity assumption, i.e., $f(x)-f^* \geq \sa{\frac{\mu}{2}} \|x-x^*\|^2$. Finally, \eqref{ineq_g_5} is obtained using \sa{$\min\{{V_{P_\al}(\xi_{k}),~ V_{Q_\al}(\xi_{k})\} \geq f(x_{k})-f^*}$}. Plugging \eqref{ineq_g_5} into the definition of $V_{Q_\al}(\xi_{k+1})$ implies
\begin{align}
 \E\left [V_{Q_\al}(\xi_{k+1}) \right]  =&   \E\left [V_{P_\al}(\xi_{k+1}) \right] + 2 \alpha \eta^2 \E \left[  (\xi_k-\xi^*)^\top C^\top C (\xi_k-\xi^*) \right] \nonumber \\
 & \leq (1+ \frac{32 \alpha \eta^2}{\mu})  \E\left [ V_{P_\al}(\xi_{k+1}) \right] + \frac{8 \alpha \eta^2}{\mu} V_{Q_\al}(\xi_{k}).
\end{align}
Using this result along with \eqref{ineq_g_2} yields
\begin{align}
\E\left [V_{Q_\al}(\xi_{k+1}) \right] & \leq  \left ( (1-\sqrt{\al \mu})(1+ \frac{32 \alpha \eta^2}{\mu}) + \frac{8 \alpha \eta^2}{\mu} \right ) \E\left [V_{Q_\al}(\xi_{k}) \right] + \alpha (1+ \frac{32 \alpha \eta^2}{\mu}) \sigma^2 \nonumber \\
& \leq  \left ( 1-\sqrt{\al \mu} + \frac{40 \alpha \eta^2}{\mu} \right ) \E\left [V_{Q_\al}(\xi_{k}) \right] + \alpha (1+ \frac{32 \alpha \eta^2}{\mu}) \sigma^2 \nonumber \\
& \leq  ( 1-\sqrt{\al \mu}/3 ) \E\left [V_{Q_\al}(\xi_{k}) \right] + (1+ \frac{32 \alpha \eta^2}{\mu}) \alpha \sigma^2 \label{ineq_g_6}
\end{align}
where the last inequality follows from the assumption $\alpha \leq \frac{\mu^3}{(60 \eta^2)^2}$ which implies 
\begin{equation}\label{ineq_g_7}
\frac{2}{3} \sqrt{\alpha \mu} \geq \frac{40 \alpha \eta^2}{\mu}.    
\end{equation}
Finally, note that, \eqref{ineq_g_7} along with $\alpha \leq 1/L$ also implies $1 \geq 60 \alpha \eta^2 / \mu$; thus, we can bound $1+ {32 \alpha \eta^2}/{\mu}$ in \eqref{ineq_g_6} by $2$ which gives us the desired result.
\end{proof}
Next, note that we can also extend Lemma \ref{connector} to the new Lyapunov function $\E\left [V_{Q_\al}(\xi_{k}) \right]$ as well:
\begin{lem}\label{general_stitch}
Let \Sml.
Consider M-ASG, i.e., Algorithm \ref{Algorithm1}, \sa{assuming the} noise model \sa{in}~\eqref{general_noise}, \sa{with} $\alpha \in (0,\bar{\alpha}]$, \sa{where} \mg{$\bar\alpha$ is defined by \eqref{def-bar-alpha}}. 
Then, for every $1 \leq k \leq K-1$,
\begin{equation}\label{general_connecting_stages}
\E\left [V_{Q_{\al_{k+1}}}(\xi_{1}^{k+1}) \right] \leq 3 \E\left [V_{Q_{\al_k}}(\xi_{n_k+1}^k)\right].
\end{equation}
\end{lem}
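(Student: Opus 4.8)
The plan is to mimic the proof of Lemma~\ref{connector}, adjusting for the extra rank-one term $2\alpha\eta^2\tilde C^\top\tilde C$ that distinguishes $\tilde Q_\alpha$ from $\tilde P_\alpha$. Recall that at the start of stage $k+1$ the restart gives $\xi_1^{k+1}=[{x_{n_k+1}^k}^\top,\ {x_{n_k+1}^k}^\top]^\top$, i.e.\ both block-coordinates equal the last iterate $x_{n_k+1}^k$ of the previous stage. So the first step is to evaluate the quadratic form $(\xi_1^{k+1}-\xi^*)^\top Q_{\alpha_{k+1}}(\xi_1^{k+1}-\xi^*)$ on this special "diagonal'' vector. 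From the computation already carried out in \eqref{eq:mu_2} we know that the $P_\alpha$-part contributes exactly $\tfrac{\mu}{2}\|x_{n_k+1}^k-x^*\|^2$. For the extra piece, note $\tilde C\,[1,1]^\top=(1+\beta_{k+1})-\beta_{k+1}=1$, so $2\alpha_{k+1}\eta^2\,\tilde C^\top\tilde C$ evaluated on the diagonal vector contributes $2\alpha_{k+1}\eta^2\|x_{n_k+1}^k-x^*\|^2$. Hence
\[
(\xi_1^{k+1}-\xi^*)^\top Q_{\alpha_{k+1}}(\xi_1^{k+1}-\xi^*)
=\Big(\tfrac{\mu}{2}+2\alpha_{k+1}\eta^2\Big)\|x_{n_k+1}^k-x^*\|^2 .
\]

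The second step is to bound the right-hand side by a multiple of $f(x_{n_k+1}^k)-f^*$. Using strong convexity, $\|x_{n_k+1}^k-x^*\|^2\le\tfrac{2}{\mu}(f(x_{n_k+1}^k)-f^*)$, so the quadratic term is at most $\big(1+\tfrac{4\alpha_{k+1}\eta^2}{\mu}\big)(f(x_{n_k+1}^k)-f^*)$. Adding $f(x_{n_k+1}^k)-f^*$ from the definition of $V_{Q_{\alpha_{k+1}}}$ gives $V_{Q_{\alpha_{k+1}}}(\xi_1^{k+1})\le\big(2+\tfrac{4\alpha_{k+1}\eta^2}{\mu}\big)(f(x_{n_k+1}^k)-f^*)$. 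Now I invoke the stepsize restriction $\alpha_{k+1}\le\bar\alpha\le\mu^3/(60\eta^2)^2$ (when $\eta>0$; the $\eta=0$ case reduces to Lemma~\ref{connector}), together with $\alpha_{k+1}\le 1/L$, exactly as in the proof of Theorem~\ref{be_al_thm_extn} where \eqref{ineq_g_7} yields $60\alpha\eta^2/\mu\le 1$; this gives $4\alpha_{k+1}\eta^2/\mu\le 1$, so the constant $2+4\alpha_{k+1}\eta^2/\mu$ is at most $3$. Therefore $V_{Q_{\alpha_{k+1}}}(\xi_1^{k+1})\le 3(f(x_{n_k+1}^k)-f^*)$.

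The third step closes the argument: since $Q_{\alpha_k}\succeq 0$, we have $V_{Q_{\alpha_k}}(\xi_{n_k+1}^k)\ge f(x_{n_k+1}^k)-f^*$, so $V_{Q_{\alpha_{k+1}}}(\xi_1^{k+1})\le 3\,V_{Q_{\alpha_k}}(\xi_{n_k+1}^k)$, and taking expectations on both sides gives \eqref{general_connecting_stages}.

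I do not anticipate a genuine obstacle here; the only place requiring care is bookkeeping the constant, i.e.\ making sure the coefficient coming from the extra $\tilde C^\top\tilde C$ term is absorbed into "$3$'' rather than "$2$'', which is precisely what the stepsize bound $\bar\alpha$ is designed to guarantee (the same inequality $60\alpha\eta^2/\mu\le1$ that was used in the proof of Theorem~\ref{be_al_thm_extn}). One should also double-check the degenerate case $\eta=0$: then $Q_\alpha=P_\alpha$, $\bar\alpha=1/L$, and the statement degrades gracefully to the factor-$2$ bound of Lemma~\ref{connector}, so $3$ is still a valid (if loose) constant.
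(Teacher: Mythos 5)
Your proposal is correct and follows essentially the same route as the paper's own proof: evaluate $Q_{\alpha_{k+1}}$ on the restart vector (using $\tilde C\,[1,1]^\top=1$ to get the coefficient $\tfrac{\mu}{2}+2\alpha_{k+1}\eta^2$), bound via strong convexity to obtain the constant $2+\tfrac{4\alpha_{k+1}\eta^2}{\mu}$, absorb the extra term into $3$ using the stepsize restriction from \eqref{ineq_g_7}, and finish with $V_{Q_{\alpha_k}}(\xi_{n_k+1}^k)\ge f(x_{n_k+1}^k)-f^*$ and expectations. No gaps.
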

\begin{proof}
The proof is very similar to 
\sa{the arguments} in Appendix \ref{proof_connector}. In particular, 
\begin{align}
V_{Q_{\al_{k+1}}}&(\xi_{1}^{k+1}) = (\xi_{1}^{k+1} - \xi^*)^\top Q_{\al_{k+1}} (\xi_{1}^{k+1} - \xi^*) + f(x_{n_k+1}^k) - f^* \nonumber \\ 
&= \frac{\mu}{2} \|x_{n_k+1}^k - x^* \|^2 + 2 \alpha \eta^2 (\xi_{1}^{k+1}-\xi^*)^\top C^\top C (\xi_{1}^{k+1}-\xi^*) +f(x_{n_k+1}^k) - f^* \nonumber \\
&= (\frac{\mu}{2} + 2 \alpha \eta^2) \|x_{n_k+1}^k - x^* \|^2  +f(x_{n_k+1}^k) - f^* \nonumber \\
& \leq (2+ \frac{4 \alpha \eta^2}{\mu})(f(x_{n_k+1}^k) - f^*) \label{ineq_1_g_lem}\\
& \leq 3 \sa{V_{Q_{\al_k}}}(\xi_{n_k+1}^k) \label{ineq_2_g_lem}
\end{align}
where \eqref{ineq_1_g_lem} follows from \eqref{ineq_S} with $x=x_{n_k+1}^k$ and $y=x^*$. Finally, \eqref{ineq_2_g_lem} follows from \sa{$1/60 \geq \alpha \eta^2/ \mu$, which holds due to \eqref{ineq_g_7} and $\alpha\leq \frac{1}{L}$,} 
along with $\sa{V_{Q_{\al_k}}}(\xi_{n_k+1}^k) \geq f(x_{n_k+1}^k) - f^*$. Taking expectations 
\sa{of} both sides of \eqref{ineq_2_g_lem} completes the proof.
\end{proof}
\sa{Using the results in Theorem~\ref{be_al_thm_extn} and Lemma~\ref{general_stitch}}, we can analyze M-ASG for \sa{this 
more general noise setting in \eqref{general_noise} as well and extend our complexity result in Corollary~\ref{Universal_MASG} as follows:}
\begin{equation*}
\E\left [f(x_n)\right] - f^* \leq~\bigO(1) \left( \exp \left (-{n}/\left (\Theta(1)(\sqrt{\kappa}+\eta^2/\mu^2) \right ) \right )(f(x_0^0)-f^*) + \frac{\sigma^2 }{n \mu}  \right)
\end{equation*}
\sa{for $n$ sufficiently large and known in advance.} It is worth noting that we can also derive similar results to Theorems \ref{main_result} and \ref{thm_result} when $n$ is not known. We skip the details as all the arguments follow very similar to our analysis in Section \ref{MASG}.
}
\ali{
\section{M-ASG for Convex Objective Functions}\label{Convex_Extension}
For \sa{merely} convex objective functions, as discussed in \cite{Lan2012}, the suboptimality $\E\left[f(x_n)\right] - f^*$ admits the lower bound \sa{given below:}
\begin{equation}
\Theta(1) \left (\frac{L}{n^2} \| x_0 - x^* \|_2^2 + \frac{\sigma^2}{\sqrt{n}} \right ). 
\end{equation}
\sa{The author of~\cite{Lan2012} obtains} this lower bound for the case of compact domain with the additional knowledge of noise parameter $\sigma$. For unconstrained optimization, and without \sa{using the information on the noise parameter,} $\sigma^2$, \sa{it is shown in \cite{pmlr-v80-cohen18a} that one can achieve} the rate $\mathcal{O}(\frac{1}{\sqrt{n}})$ in both bias and variance terms (see last part of Corollary 3.9 and also Corollary 4.1 in \cite{pmlr-v80-cohen18a}). As we state below, a direct application of our current results recovers a similar result up to a log factor. 
\begin{thm}\label{Thm_rebuttal}
Let $f$ be a \sa{merely} convex function, i.e., $f \in S_{0, L} (\mathbb{R}^d)$ \sa{with $\mu=0$}, \sa{and let $n\geq 2$ be the given iteration budget. Define $f_\lambda(x) \sa{\triangleq} f(x) + \frac{\lambda}{2} \|x-x_0\|^2$ with $\lambda \sa{\triangleq} {L}/(\sqrt{n}-1)$.
Consider running ASG, given in \eqref{nest:main}, with stepsize $\alpha = \frac{(\log n)^2}{n^{3/2}L}$ for solving $\min_x f_\lambda(x)$.} 
Then, 
\begin{equation}
\E \left [f(x_{n+1})\right] - f^* \leq \sa{\frac{2}{n}} (f(x_0)-f^*) + \sa{\frac{L}{\sqrt{n}}} \|x_0-x^*\|^2 + \frac{ \sigma^2 \log n}{\sqrt{n} L}.    
\end{equation}
\end{thm}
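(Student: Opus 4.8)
The plan is to treat this as a regularization reduction: apply the single-stage analysis of Theorem~\ref{non_assym_thm} to the strongly convex surrogate $f_\lambda(x)=f(x)+\tfrac{\lambda}{2}\|x-x_0\|^2$ and then account for the perturbation. First I would record the basic properties of $f_\lambda$: it belongs to $S_{\lambda,\,L+\lambda}(\R^d)$, so its condition number is $\kappa_\lambda=(L+\lambda)/\lambda$, and the choice $\lambda=L/(\sqrt n-1)$ makes this collapse to the clean value $\kappa_\lambda=\sqrt n$. Next I would observe that the stochastic oracle for $\nabla f$ induces one for $\nabla f_\lambda$ via $\tilde{\nabla} f_\lambda(x,w):=\tilde{\nabla} f(x,w)+\lambda(x-x_0)$, which is unbiased for $\nabla f_\lambda(x)$ and whose conditional variance is still bounded by $\sigma^2$, since the regularizer contributes no noise. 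Hence running the iterations \eqref{nest:main} on $f_\lambda$ is exactly the setting of Theorem~\ref{non_assym_thm}, with $L$ there replaced by $L+\lambda$, $\mu$ by $\lambda$, and $x^*$ by the minimizer $x_\lambda^*$ of $f_\lambda$.

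I would then match the prescribed stepsize to the form required by that theorem: writing $\alpha=(\log n)^2/(n^{3/2}L)$ as $\alpha=c^2/(L+\lambda)$ forces $c^2=(\log n)^2(L+\lambda)/(n^{3/2}L)=(\log n)^2/\big(n(\sqrt n-1)\big)$, and one checks $c\le 1$ for every $n\ge 2$. Theorem~\ref{non_assym_thm} then gives
\[
\E\left[V_{P_\alpha}(\xi_{n+1})\right]\le \exp\left(-\tfrac{nc}{\sqrt{\kappa_\lambda}}\right)\E\left[V_{P_\alpha}(\xi_{1})\right]+\frac{\sigma^2\sqrt{\kappa_\lambda}\,c}{L+\lambda},
\]
where $V_{P_\alpha}$ and $P_\alpha$ are those of Theorem~\ref{be_al_thm} but built from $f_\lambda$. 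Since $\sqrt{\kappa_\lambda}=n^{1/4}$, a short computation shows $nc/\sqrt{\kappa_\lambda}=\log n\,\sqrt{\sqrt n/(\sqrt n-1)}\ge \log n$, so the contraction factor is at most $1/n$; the same substitution gives $\sqrt{\kappa_\lambda}\,c/(L+\lambda)\le 1/(\sqrt n\,L)$, so the variance term is at most $\sigma^2\log n/(\sqrt n\,L)$. For the initialization, the identity used in the proof of Lemma~\ref{connector} (applied at $\xi_1=[x_0^\top,x_0^\top]^\top$, with strong convexity of $f_\lambda$ at $x_\lambda^*$) gives $\E[V_{P_\alpha}(\xi_1)]=\tfrac{\lambda}{2}\|x_0-x_\lambda^*\|^2+f_\lambda(x_0)-f_\lambda^*\le 2\left(f_\lambda(x_0)-f_\lambda^*\right)$, and since $f_\lambda(x_0)=f(x_0)$ and $f_\lambda^*\ge f^*$ this is at most $2(f(x_0)-f^*)$.

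It remains to pass from the bound on $V_{P_\alpha}(\xi_{n+1})$ to one on $f(x_{n+1})-f^*$. I would chain three elementary inequalities: $f_\lambda(x_{n+1})-f_\lambda^*\le V_{P_\alpha}(\xi_{n+1})$ (the quadratic term in $V_{P_\alpha}$ is nonnegative), $f(x_{n+1})\le f_\lambda(x_{n+1})$ (the regularizer is nonnegative), and $f_\lambda^*\le f_\lambda(x^*)=f^*+\tfrac{\lambda}{2}\|x_0-x^*\|^2$; together these give $f(x_{n+1})-f^*\le V_{P_\alpha}(\xi_{n+1})+\tfrac{\lambda}{2}\|x_0-x^*\|^2$. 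Taking expectations and inserting the three estimates above yields $\E[f(x_{n+1})]-f^*\le \tfrac{2}{n}(f(x_0)-f^*)+\tfrac{\sigma^2\log n}{\sqrt n\,L}+\tfrac{\lambda}{2}\|x_0-x^*\|^2$, and finally $\tfrac{\lambda}{2}=\tfrac{L}{2(\sqrt n-1)}\le \tfrac{L}{\sqrt n}$ (for $n\ge 4$; the stated constant absorbs the two remaining values of $n$) closes the argument.

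I do not expect a deep obstacle: this is a regularization reduction, and the only genuine work is the arithmetic verifying that with $\lambda=L/(\sqrt n-1)$ all three pieces — the factor $e^{-nc/\sqrt{\kappa_\lambda}}$, the variance coefficient $\sqrt{\kappa_\lambda}\,c/(L+\lambda)$, and the regularization penalty $\tfrac{\lambda}{2}\|x_0-x^*\|^2$ — simultaneously land at the advertised $1/n$, $\sigma^2\log n/(\sqrt n\,L)$, and $L/\sqrt n$ scales. The one point requiring care is that one must invoke Theorem~\ref{non_assym_thm} with the tailored value of $c$ above rather than Corollary~\ref{1stage_result}, since the stepsize stated in the theorem has $L$ (not the surrogate smoothness $L+\lambda$) in the denominator.
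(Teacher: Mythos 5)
Your proposal is correct and follows essentially the same route as the paper's proof: regularize to $f_\lambda \in S_{\lambda, L+\lambda}(\mathbb{R}^d)$ with $\kappa_\lambda=\sqrt{n}$, apply Theorem~\ref{non_assym_thm} for one stage, bound $\E[V_{P_\alpha}(\xi_1)]\le 2(f_\lambda(x_0)-f_\lambda^*)$ as in Lemma~\ref{connector}, and transfer back via $f\le f_\lambda$ and $f_\lambda^*\le f^*+\tfrac{\lambda}{2}\|x_0-x^*\|^2$. In fact you are slightly more careful than the paper on one point: the paper takes $c=\log n/n^{3/4}$, which corresponds to the stepsize $c^2/(L+\lambda)$ rather than the stated $(\log n)^2/(n^{3/2}L)$, whereas you solve for the $c$ matching the stated stepsize and verify the same bounds still hold.
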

\begin{proof}
\sa{Define $f^*_\lambda\triangleq\min_x f_\lambda(x)$}. Note $f_\lambda \in S_{\lambda, L+\lambda}(\mathbb{R}^d)$; thus, using Theorem 3.1 with $c = {\log n}/{n^{3/4}}$ and $\kappa = (L+\lambda)/\lambda = \sqrt{n}$ implies
\begin{align}
\E \left [f_\lambda(x_{n+1}^1)\right] - f_\lambda^* & \leq \E\left [V_{P_{\sa{\alpha}}}(\xi_{n+1}) \right] \nonumber \\
& \leq \exp(-n\frac{c}{\sqrt{\kappa}}) \E\left [V_{P_{\sa{\alpha}}}(\xi_{1}) \right] + \frac{\sigma^2 \sqrt{\kappa} c}{L + \lambda} \nonumber \\
&\leq \frac{1}{n} \E\left [V_{P_{\sa{\alpha}}}(\xi_{1}) \right] + \frac{ \sigma^2 \log n}{\sqrt{n} L}. \label{convex_ineq1}
\end{align}
Now, using the fact that $x_0 = x_{-1}$, and similar to the proof of Lemma 3.3, we can show 
$$\E [V_{P_{\sa{\alpha}}}(\xi_{1})] \leq 2 (f_\lambda (x_0) - f_\lambda^*) = 2 (f(x_0) - f_\lambda^*).$$ 
Therefore, plugging this into \eqref{convex_ineq1}, we obtain
\begin{equation}
\E \left [f_\lambda(x_{n+1})\right] - f_\lambda^* \leq  \frac{2}{n} (f(x_0) - f_\lambda^*) + \frac{ \sigma^2 \log n}{\sqrt{n} L},  
\end{equation}
which is equivalent to
\begin{equation}
\E \left [f_\lambda(x_{n+1})\right] - (1-\frac{2}{n}) f_\lambda^* \leq  \frac{2}{n} f(x_0) + \frac{ \sigma^2 \log n}{\sqrt{n} L}.
\end{equation}
This result along with $f(x_{n+1}) \leq f_\lambda(x_{n+1})$ implies
\begin{equation}
\E [f(x_{n+1})] - (1-\frac{2}{n}) f_\lambda^* \leq \frac{2}{n} f(x_0) + \frac{\sigma^2 \log n}{\sqrt{n} L}.
\end{equation}
Finally, using the bound 
$$f_\lambda^* \leq f_\lambda(x^*) = f^* + \frac{\lambda}{2} \|x_0 - x^*\|^2$$ 
completes the proof.
\end{proof}
}
\section{AC-SA from the perspective of Nesterov's Accelerated Method}\label{ACSA}
Recall that AC-SA \cite{ghadimi2012optimal} with initial point $x_0$ and sequence of stepsize parameters $\{\eta_t\}_{t\geq 1}$ and $\{\gamma_t\}_{t\geq 1}$ has the following update rule:
\begin{enumerate}[label=(\roman*)]
\item Set $x^{ag}_{0}=x_0$ and $t=1$;
\item Set $x_t^{md} = \frac{(1-\eta_t)(\mu+\gamma_t)x_{t-1}^{ag} + \eta_t[(1-\eta_t)\mu + \gamma_t]x_{t-1}}{\gamma_t+(1-\eta_t^2)\mu};$\label{md_update}
\item Set $x_t = \frac{\eta_t \mu x_t^{md} + [(1-\eta_t)\mu+\gamma_t]x_{t-1} - \eta_t G_t}{\mu + \gamma_t}$ where $G_t = \tilde{\nabla}f(x_t^{md},w_t)$; \label{x_update}
\item Set $x_t^{ag} = \eta_t x_t + (1-\eta_t)x_{t-1}^{ag}$;\label{ag_update}
\item Set $t \leftarrow t+1$ and go to step \ref{md_update}.
\end{enumerate}
We claim that this algorithm can be cast as an ASG method in~\eqref{nest:main} with a specific varying stepsize rule. In fact, we show it can be represented as 
\begin{subequations}\label{ACSA_as_ASG}
\begin{align}
x_t^{md} &= (1+\tilde{\beta}_t)x_{t-1}^{ag} - \tilde{\beta}_t x_{t-2}^{ag} \label{ACSA_as_ASG:a} \\
x_t^{ag} &= x_t^{md} - \tilde{\alpha}_t \tilde{\nabla} f(x_t^{md},w_t). \label{ACSA_as_ASG:b}
\end{align}
\end{subequations} 
with 
$$\tilde{\alpha}_t = \frac{\eta_t^2}{\mu + \gamma_t}, \quad \tilde{\beta}_t = \frac{\eta_t(1-\eta_{t-1})[(1-\eta_t)\mu+\gamma_t]}{\eta_{t-1}[\gamma_t + (1-\eta_t^2)\mu]}.$$
To show this, first, multiplying both sides of \eqref{md_update} by $\frac{\gamma_t+(1-\eta_t^2)\mu}{\mu + \gamma_t}$ implies
\begin{equation}\label{eq_ACSA_1}
\frac{\gamma_t+(1-\eta_t^2)\mu}{\mu + \gamma_t} x_t^{md} = (1-\eta_t)x_{t-1}^{ag}+ \frac{\eta_t [(1-\eta_t)\mu + \gamma_t]}{\mu + \gamma_t} x_{t-1},    
\end{equation}
and by substituting  $(1-\eta_t)x_{t-1}^{ag}$ by $x_t^{ag}-\eta_t x_t$ from \eqref{ag_update} we obtain
\begin{equation}\label{eq_ACSA_2}
x_t^{ag} =  \eta_t \left ( x_t -  \frac{(1-\eta_t)\mu + \gamma_t}{\mu + \gamma_t} x_{t-1} \right ) + \frac{\gamma_t+(1-\eta_t^2)\mu}{\mu + \gamma_t} x_t^{md}.  
\end{equation}
Note that, by \eqref{x_update}, we have
\begin{equation}\label{eq_ACSA_3}
x_t -  \frac{(1-\eta_t)\mu + \gamma_t}{\mu + \gamma_t} x_{t-1} = \frac{\eta_t \mu}{\mu + \gamma_t} x_t^{md} - \frac{\eta_t}{\mu + \gamma_t} G_t,   
\end{equation}
and therefore, \eqref{eq_ACSA_2} and \eqref{eq_ACSA_3} yield
\begin{align*}
x_t^{ag} &=  \eta_t \left ( \frac{\eta_t \mu}{\mu + \gamma_t} x_t^{md} - \frac{\eta_t}{\mu + \gamma_t} G_t \right ) + \frac{\gamma_t+(1-\eta_t^2)\mu}{\mu + \gamma_t} x_t^{md}\\
& = \left ( \frac{\eta_t^2 \mu}{\mu + \gamma_t} + \frac{\gamma_t+(1-\eta_t^2)\mu}{\mu + \gamma_t}\right ) x_t^{md} - \frac{\eta_t^2}{\mu + \gamma_t} G_t\\
& = x_t^{md} - \frac{\eta_t^2}{\mu + \gamma_t} G_t
\end{align*}
which implies \eqref{ACSA_as_ASG:b}.

To show \eqref{ACSA_as_ASG:a}, first note that by \eqref{ag_update} for $t-1$, we obtain $x_{t-1} = \frac{1}{\eta_{t-1}} (x_{t-1}^{ag} - (1-\eta_{t-1})x_{t-2}^{ag})$. Plugging in this in \eqref{md_update}, leads to
\begin{align*}
x_t^{md} &= \frac{(1-\eta_t)(\mu+\gamma_t)}{\gamma_t+(1-\eta_t^2)\mu} x_{t-1}^{ag} + \frac{\eta_t[(1-\eta_t)\mu + \gamma_t]}{\eta_{t-1}[\gamma_t + (1-\eta_t^2)\mu]}\left(x_{t-1}^{ag} - (1-\eta_{t-1})x_{t-2}^{ag}\right)\\
&= \left ( \frac{(1-\eta_t)(\mu+\gamma_t)}{\gamma_t+(1-\eta_t^2)\mu} + \frac{\eta_t[(1-\eta_t)\mu + \gamma_t]}{\eta_{t-1}[\gamma_t + (1-\eta_t^2)\mu]} \right) x_{t-1}^{ag} - \frac{\eta_t(1-\eta_{t-1})[(1-\eta_t)\mu + \gamma_t]}{\eta_{t-1}[\gamma_t + (1-\eta_t^2)\mu]} x_{t-2}^{ag}\\
& = \frac{\eta_{t-1} (1-\eta_t)(\mu+\gamma_t) + \eta_t[(1-\eta_t)\mu + \gamma_t]}{\eta_{t-1}[\gamma_t + (1-\eta_t^2)\mu]} x_{t-1}^{ag} - \tilde{\beta}_t x_{t-2}^{ag}\\
& = \frac{\eta_{t-1} (1-\eta_t)(\mu+\gamma_t) + \eta_t \eta_{t-1}[(1-\eta_t)\mu + \gamma_t]}{\eta_{t-1}[\gamma_t + (1-\eta_t^2)\mu]} x_{t-1}^{ag} + \tilde{\beta}_t (x_{t-1}^{ag} - x_{t-2}^{ag})\\
&= x_{t-1}^{ag} + \tilde{\beta}_t (x_{t-1}^{ag} - x_{t-2}^{ag})
\end{align*}
which is \eqref{ACSA_as_ASG:a} and the proof is complete.
\begin{figure*}[t]
  \centering
    \begin{subfigure}{0.32\textwidth}
    \begin{center}
      \centerline{\includegraphics[width=1.12\columnwidth]{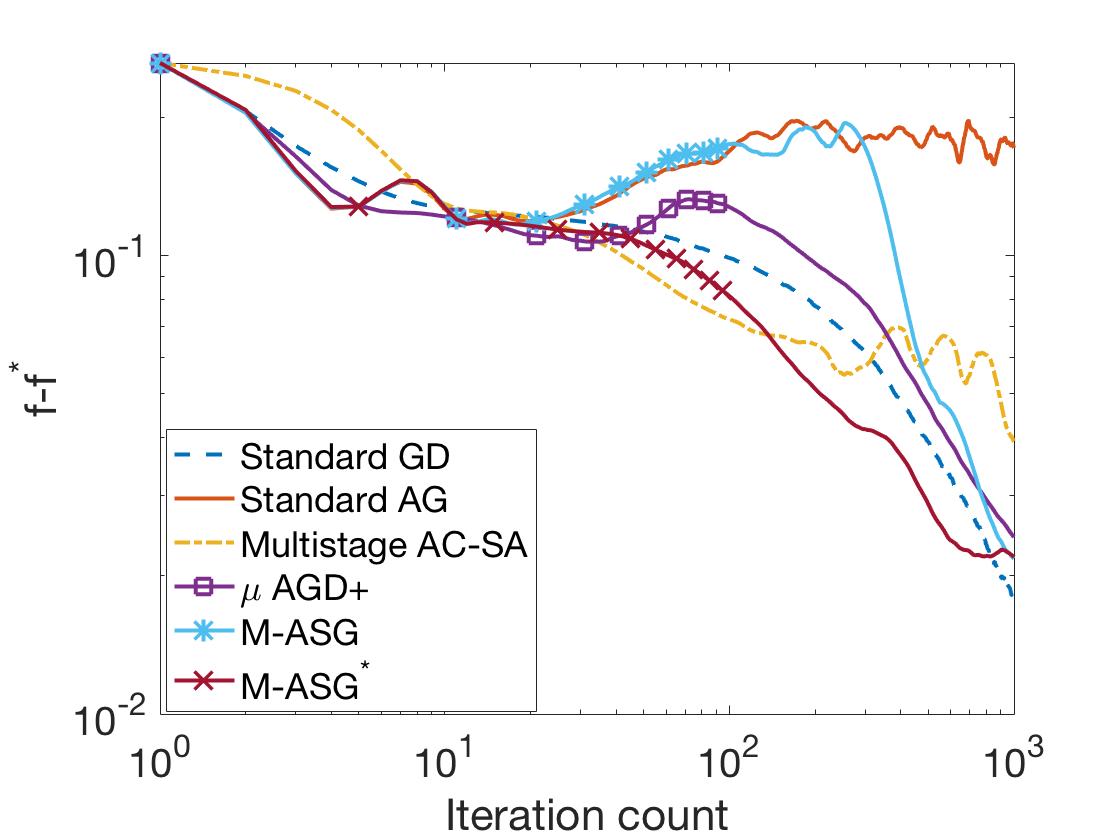}}
            \caption{$\sigma_n^2 = 10^{-2}$ }
      \label{fig1000_1_LL}
    \end{center}
      \end{subfigure}
        \begin{subfigure}{0.32\textwidth}
    \begin{center}
      \centerline{\includegraphics[width=1.12\columnwidth]{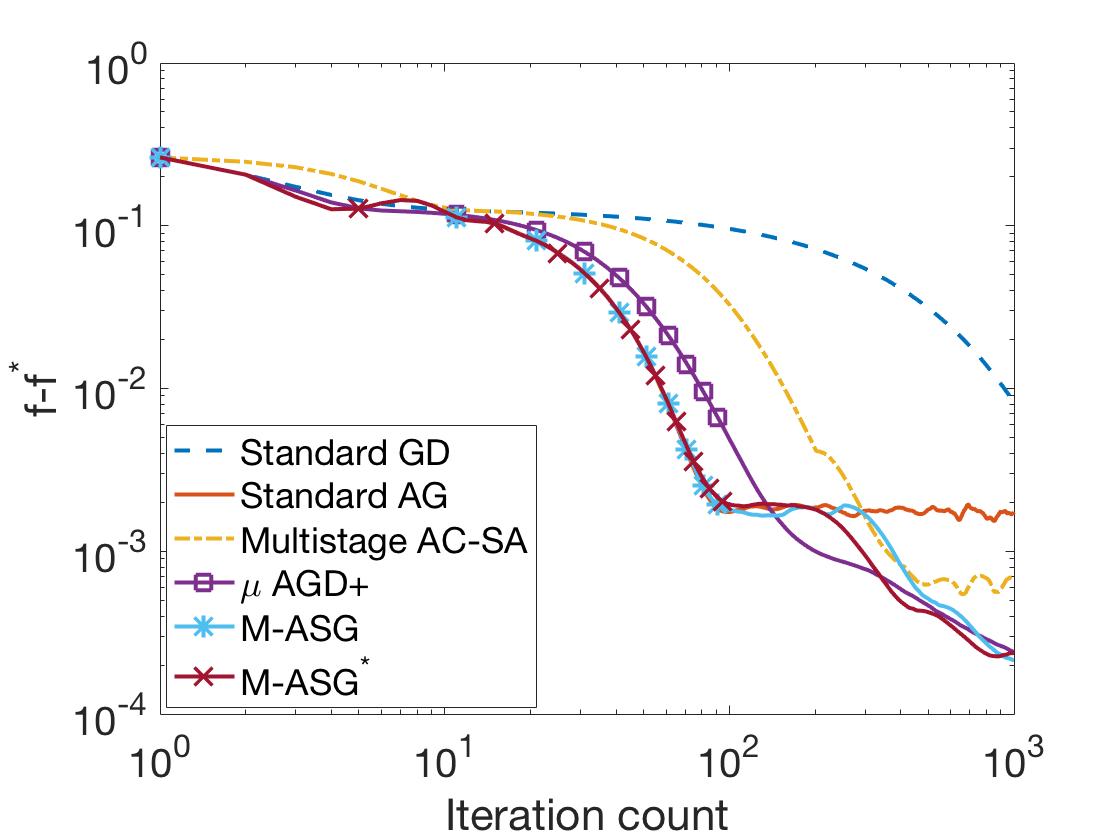}}
            \caption{$\sigma_n^2 = 10^{-4}$}
      \label{fig1000_2_LL}
    \end{center}
  \end{subfigure}
    \begin{subfigure}{0.32\textwidth}
    \begin{center}
      \centerline{\includegraphics[width=1.12\columnwidth]{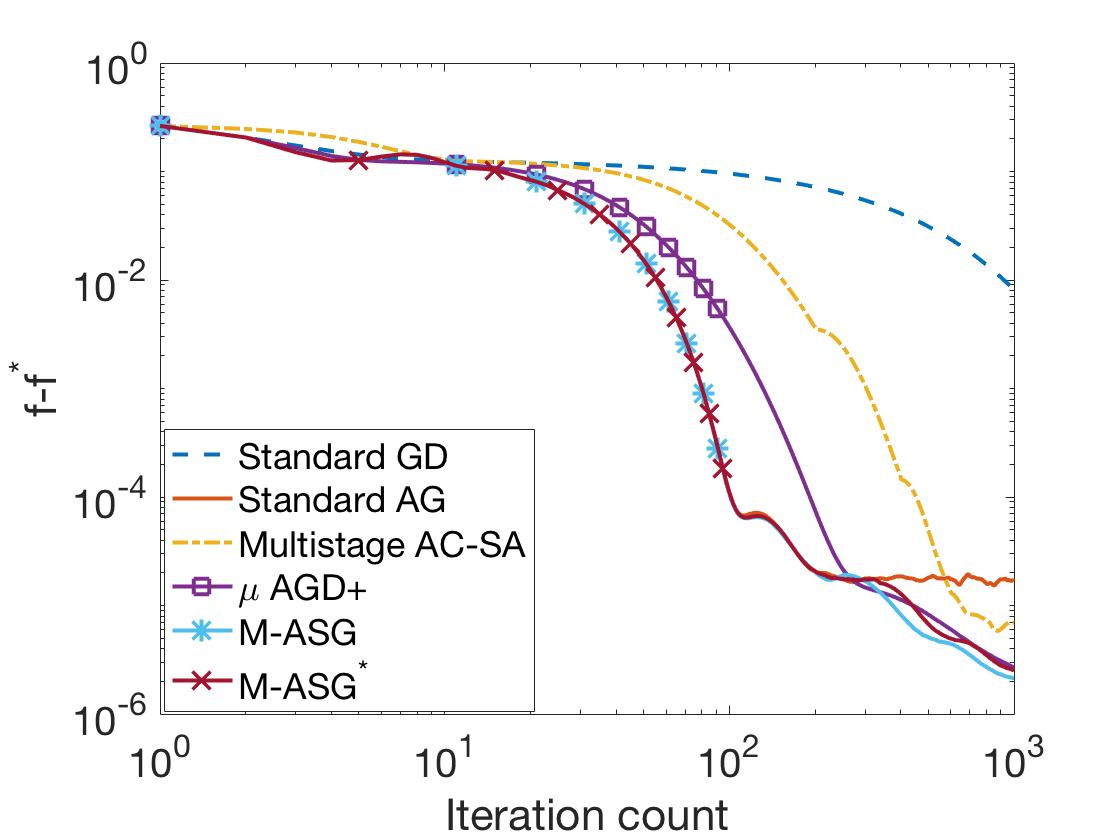}}
      \caption{$\sigma_n^2 = 10^{-6}$}
      \label{fig1000_3_LL}
    \end{center}
  \end{subfigure}
    \caption{Comparison of GD, AGD, $\mu$AGD+, and MASG for logistic regression with $n=1000$ iterations with different level of noise. \label{fig1000_LL}}
 \end{figure*}
\begin{figure*}[t]
  \centering
    \begin{subfigure}{0.32\textwidth}
    \begin{center}
      \centerline{\includegraphics[width=1.12\columnwidth]{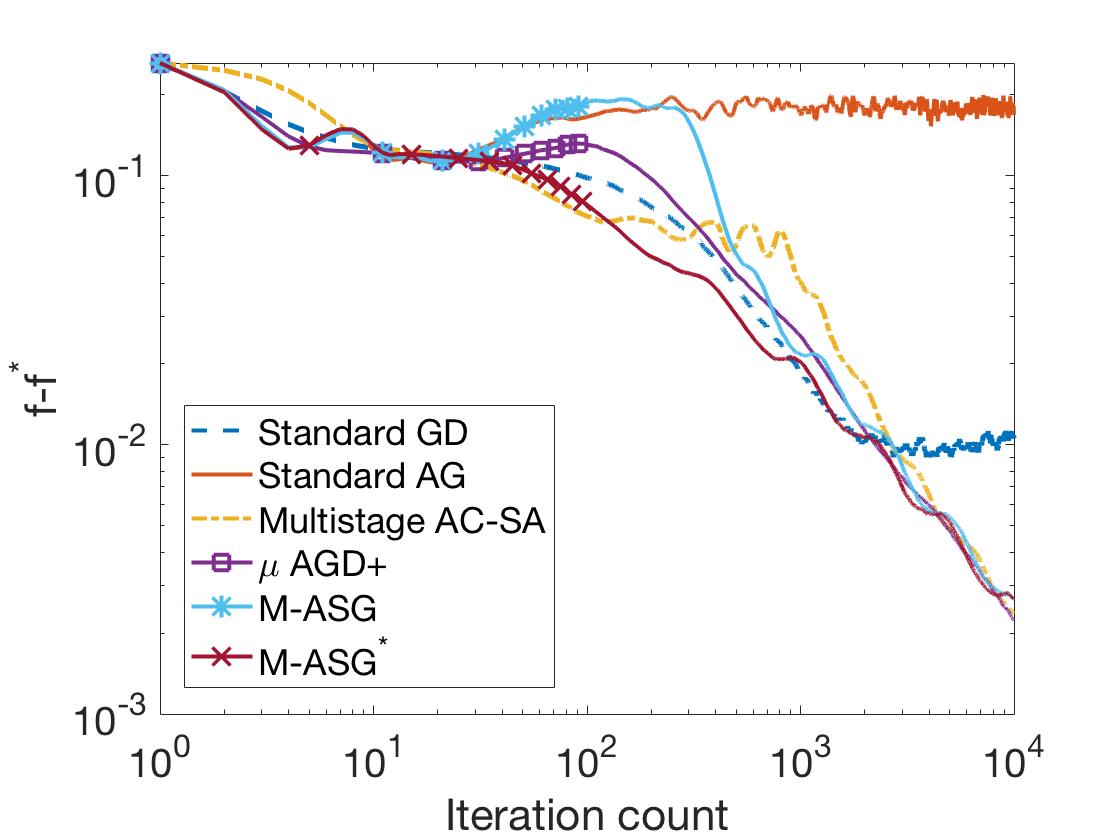}}
            \caption{$\sigma_n^2 = 10^{-2}$ }
      \label{fig10000_1_LL}
    \end{center}
      \end{subfigure}
        \begin{subfigure}{0.32\textwidth}
    \begin{center}
      \centerline{\includegraphics[width=1.12\columnwidth]{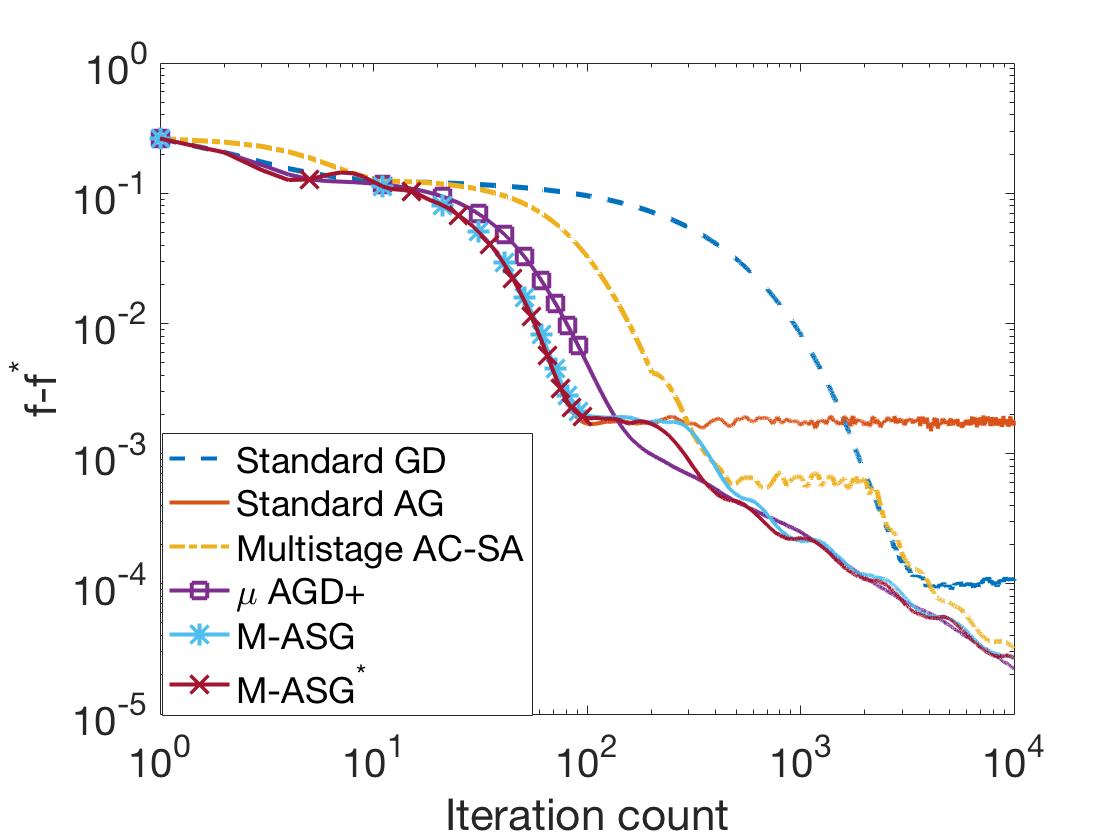}}
            \caption{$\sigma_n^2 = 10^{-4}$}
      \label{fig10000_2_LL}
    \end{center}
  \end{subfigure}
    \begin{subfigure}{0.32\textwidth}
    \begin{center}
      \centerline{\includegraphics[width=1.12\columnwidth]{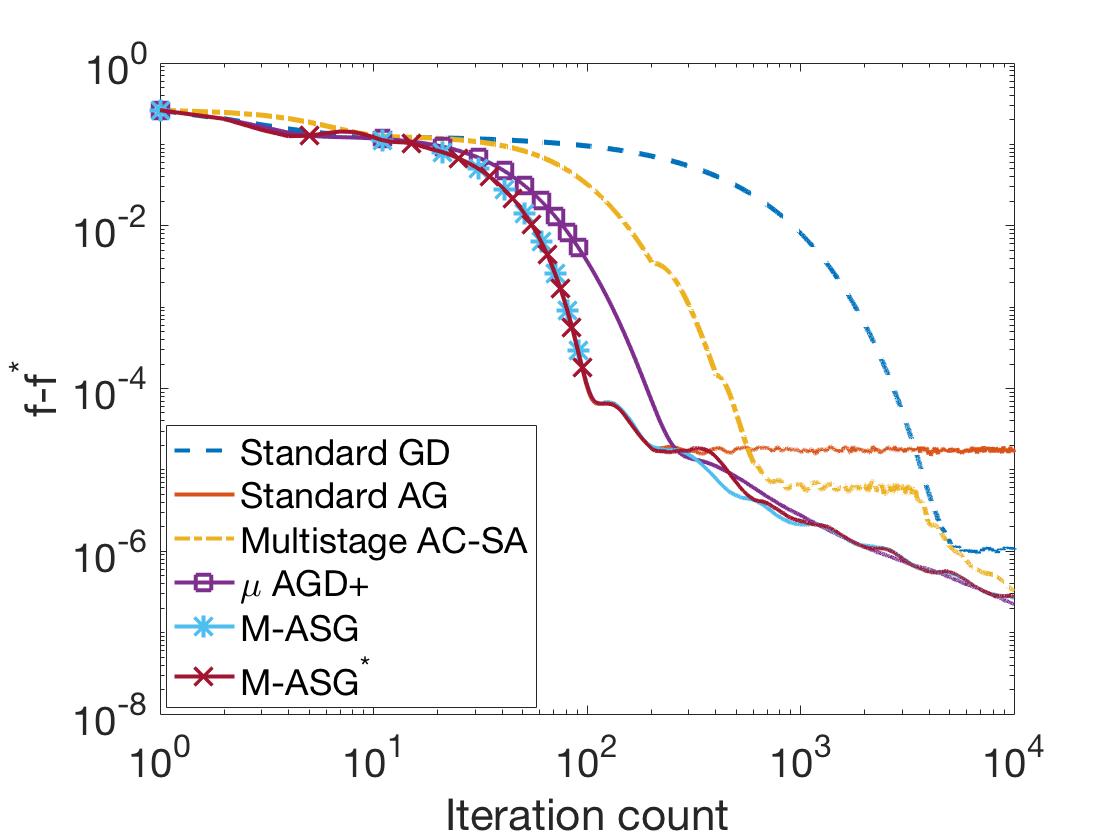}}
      \caption{$\sigma_n^2 = 10^{-6}$}
      \label{fig10000_3_LL}
    \end{center}
  \end{subfigure}
    \caption{Comparison of GD, AGD, $\mu$AGD+, and MASG for logistic regression with $n=10000$ iterations with different level of noise. \label{fig10000_LL}}
 \end{figure*}

As a consequence, Multistage AC-SA is a variant of M-ASG Algorithm that has a different length $n_k$ for each stage $k\geq 1$ and 
employs a specific varying stepsize rule together with a different selection for the momentum parameter at each stage.
\section{Additional Numerical Experiments}\label{numerical_additional}
In this section, we study another classification problem using logistic regression, but with a synthesized data. In particular, we generate a random matrix $M\in \mathbb{R}^{2000\times 100}$ and a random vector $w\in \R^{100}$ and compute $y= \mbox{sign}(Mw)$ which is the vector that contains the sign of the inner product with the rows of $M$ and the vector $w$. Our goal is to recover $w$ by optimizing a regularized logistic objective when the gradient of the loss function is corrupted with additive Gaussian noise. We compare M-ASG and M-ASG$^*$ with Standard GD,  Standard AG, $\mu$AGD+ \cite{pmlr-v80-cohen18a}, and Multistage AC-SA \cite{ghadimi2013optimal}. We note that the condition number of the problem $\kappa \sim 1000$ for this problem. Figures \ref{fig1000_LL}-- \ref{fig10000_LL} illustrate the behavior of the algorithms for $n=1000$ and $n=10000$ iterations for the noise level $\sigma_n^2 \in \{10^{-6},10^{-4},10^{-2} \}$ as before. It can be seen that both M-ASG and M-ASG$^*$ usually start faster, and do not perform worse than other algorithms in different scenarios; moreover, they outperform other algorithms when the iteration budget is limited or the noise level is small. Furthermore, note that in the setting where the noise is large, M-ASG$^*$ behaves better than M-ASG, as it terminates the first stage earlier, which is helpful as the noise is large; hence, the variance becomes term dominant in the first stage just after a few iterations.
\end{document}